\DeclareRobustCommand{\SkipTocEntry}[4]{}
\makeindex

\documentclass[letterpaper,12pt,reqno,latexsym,amsbsy,xypic,mathrsfs,verbatim]{amsart}
\usepackage{amsfonts,amssymb,amsthm}
\usepackage{amsmath,amscd}
\usepackage{pstricks}
\usepackage{pstricks,pst-node}
\usepackage{mathrsfs}
\usepackage[all]{xy}
\usepackage[enableskew,vcentermath]{youngtab}






\renewcommand{\subsection}[1]{\vspace{.18in}\par\noindent\addtocounter{subsection}{1}\setcounter{equation}{0}{\bf\thesubsection.\hspace{5pt}#1}}

\setlength{\textwidth}{6.3in} \setlength{\textheight}{9.2in}
\setlength{\hoffset}{-.5in} \setlength{\voffset}{-.5in}
\setlength{\footskip}{20pt}

\newtheorem{theorem}{Theorem}[section]
\numberwithin{theorem}{section}
\numberwithin{equation}{theorem}

\theoremstyle{definition}
\newtheorem{defn}[theorem]{Definition}

\newtheorem{rem}[theorem]{Remark}

\theoremstyle{plain}
\newtheorem{prop}[theorem]{Proposition}
\newtheorem{thm}[theorem]{Theorem}
\newtheorem{lem}[theorem]{Lemma}
\newtheorem{cor}[theorem]{Corollary}



\newcommand{\ds}{\displaystyle}

\newcommand{\C}{\mathbb C}

\newcommand{\Z}{\mathbb Z}
\newcommand{\N}{\mathbb N}
\newcommand{\Q}{\mathbb Q}
\newcommand{\Qv}{\mathbb{Q}(v)}

\newcommand{\n}{\mathfrak{n}}

\newcommand{\ep}{\epsilon}

\newcommand{\ol}{\overline}

\newcommand{\mc}{\mathcal}

\newcommand{\refdot}{\boldsymbol{\cdot}}

\def\genE{{\mathsf{E}}}
\def\genF{{\mathsf{F}}}
\def\genK{{\mathsf{K}}}
\def\genXE{{\mathsf{E}}}
\def\genXF{{\mathsf{E}}}

\def\Sn{{\mathfrak S}_n}
\def\qn{{\mathfrak q_n}}

\def\Qvns{{\mathcal{Q}^{s}_v{({n})}}}

\def\Qvnr{{\mathcal{Q}^{s}_v{({n,r})}}}
\def\QqnR{{\mathcal{Q}^{s}_{q}{({n,r;R})}}}
\def\Uqn{{\boldsymbol U}(\qn)}
\def\Uv{{\boldsymbol U}_{\!{v}}}
\def\Uvqn{{\boldsymbol U}_{\!{v}}(\qn)}
\def\Avn{\mathfrak{A}_v(n)}

\newcommand{\qUZ}{{{\boldsymbol U}_{v,\mc Z}}}
\newcommand{\eUZ}{{{\boldsymbol U}_{\epsilon,\Z}}}
\newcommand{\eU}{{{\boldsymbol U}_{\epsilon}}}
\newcommand{\bi}{{\bar i}}
\newcommand{\bj}{{\bar j}}

\newcommand{\mcZ}{{\mathcal{Z}}}

{\vskip-\lastskip\medskip
  \noindent
  {\em #1.}\enspace
  }%
{\qed\par\medskip
  }
\allowdisplaybreaks[1]

\begin{document}

\title[Quantum queer superalgebra and its integral form]{Quantum queer superalgebra and its integral form}
\author{Jianmin Chen, Zhenhua Li, Hongying Zhu}

\address{Jianmin Chen, School of Mathematical Sciences, Xiamen University, Xiamen 361005, China}
\email{chenjianmin@xmu.edu.cn}
\address{Zhenhua Li,  Jiyang,  Jinan 251400, China}
\email{zhen-hua.li@qq.com}
\address{Hongying Zhu, School of Mathematical Sciences, Xiamen University, Xiamen 361005, China}
\email{hongyingz@stu.xmu.edu.cn}


\date{\today}
\begin{abstract} 
In this paper, we introduce quantum root vectors for the quantum queer superalgebra $\Uvqn$ via a braid-group action, compute their complete commutation relations, and construct a PBW-type basis for the Lusztig integral form $\qUZ$. This yields an explicit presentation of $\qUZ$ and provides
a way to understand the structure of the quantum queer superalgebra at roots of unity.
\end{abstract}

\subjclass[2020]{17B37, 17A70, 20G42, 20C08}
\keywords{quantum queer superalgebra, Lusztig integral form, quantum root vectors, PBW-type basis}

\maketitle

\setcounter{tocdepth}{1}  \tableofcontents

\section{Introduction}

Quantum group $\boldsymbol{U}_v(\mathfrak{g})$ is a certain (Hopf algebra) deformation of the universal enveloping algebra $\boldsymbol{U}(\mathfrak{g})$ of a complex semi-simple finite dimensional Lie algebra $\mathfrak{g}$, which was introduced by Drinfeld \cite{Uvsln} and Jimbo \cite{Jimbo} in their work on the quantum Yang–Baxter equation. In order to study representations of the quantum group $\boldsymbol{U}_v(\mathfrak{g})$ especially when $v$ is specialised to a root of unity over arbitrary commutative rings, and relate them to the representation theory of finite groups and modular theory, Lusztig \cite{Lus2} introduced the integral form of $\boldsymbol{U}_v(\mathfrak{g})$. 

Lusztig  firstly replaced the usual generators $E_i$, $F_i$ by their divided powers $E_i^{(r)}=\frac{E_i^r}{[r]^!_v}$, $F_i^{(r)}=\frac{F_i^r}{[r]^!_v}$, which are well-defined elements in $\Z[v,v^{-1}]$, and then introduce the action of braid group associated with $\mathfrak{g}$ to obtain enough integral generators for the entire positive (resp. negative) part. Associated with the action of braid group on $\boldsymbol{U}_v(\mathfrak{g})$, Lusztig defined quantum root vectors $E_{\alpha}$ (and $F_{\alpha}$) and constructed PBW-type basis of the integral form $\boldsymbol{U}_{\Z[v,v^{-1}]}$ by divided powers of these root vectors, which is a quantum generalisation of the classical Poincar{\'e}–Birkhoff–Witt theorem, see \cite{Lus2,Lus4} for more details. 

The Lusztig integral form over $\Z[v, v^{-1}]$ is generated by divided-power root vectors, which provides a concrete integral structure for the quantum group. It allows researchers to investigate its modular representation theory when $v$ is specialised to a root of unity, which is closely related to the representation theory of finite groups. For example, Lusztig \cite{Lus2,Lus5} showed that at a root of unity the quantum algebra differs from the ordinary enveloping algebra only by a finite-dimensional Hopf algebra; Schnitzer \cite{schnizer94} proved that any representation of $\boldsymbol{U}_{\ep}(\mathfrak{g}_{n-1})$ can be lifted to  $\boldsymbol{U}_{\ep}(\mathfrak{g}_{n})$ for $\ep$ a root of unity; and  Chari-Pressley \cite[Chapter 11]{CP} examined the representation theory of $\boldsymbol{U}_v(\mathfrak{g})$ under the same specialisation.

As the theory matured, attention shifted to the expression of Lusztig’s integral form across various quantum groups and related algebras.
Beyond its foundational role in bridging quantum groups and modular representation theory, this integral structure simultaneously controls specialisation to roots of unity and adapts to canonical bases.
A canonical example is integral quantum Schur-Weyl duality: the identification of the double centraliser depends on the surjectivity of the canonical homomorphism from the integral form of $\boldsymbol{U}_v(\mathfrak{gl}_n)$ onto the $v$-Schur algebra $\mathcal{S}(n,r)$. This surjectivity, first outlined in \cite[Section 5]{BLM} and detailed in \cite[(3.4)]{Du95} and \cite[Section 14.6]{DDPW}, is further unified by the quasi-hereditary technology supplied in \cite[3.5]{Du94}, thereby providing a robust integral refinement of the classical Schur-Weyl reciprocity.

Based on the definition of quantum queer superalgebra $\Uvqn$ in \cite{Ol, DW} and the braid group action on $\boldsymbol{U}_v(\mathfrak{sl}_n)$ in \cite{Lus2}, we extended the braid group action on $\Uvqn$ and constructed its quantum root vectors for $\Uvqn$ in \cite{CLZ} . In the present paper, we turn to the Lusztig integral form $\qUZ$ of $\Uvqn$, whose existence over $\Z[v,v^{-1}]$ was established in \cite{DW}. We compute the complete commutation relations of the quantum root vectors for $\Uvqn$ and give an explicit presentation of its Lusztig integral form $\qUZ$. Exploiting the new realization of $\Uvqn$ introduced in \cite{DGLW2}, we construct a PBW-type basis for $\qUZ$ and prove that the natural Superalgebra homomorphism from $\qUZ$ to the standardized queer $v$-Schur superalgebra ${\mathcal{Q}}^{s}_{v,\mcZ}(n,r)$ over $\Z[v,v^{-1}]$, which is crucial for the integral Schur-Weyl duality. We also study the structure of quantum queer superalgebra at roots of unity, laying the groundwork for its subsequent representation theory.

We organize the paper as follows. In Section 2, by recalling the Drinfeld-Jimbo-type presentation of the quantum queer superalgebra and the associated braid group action, we derive the super-commutation formulas for all quantum root vectors; these relations are compatible with the formulas given in \cite{Lus2}. In Section 3, we construct an explicit PBW-type basis for the Lusztig integral form $\qUZ$. Section 4 shifts focus to the canonical surjective homomorphism from $\qUZ$ onto the standardized queer $v$-Schur superalgebra ${\mathcal{Q}}^{s}_{v,\mcZ}(n,r)$. A presentation of $\qUZ$ by generators and relations is given in Section 5. As an application,  we specialize to roots of unity, examine some basic properties for the quantum superalgebra in Section 6.

Throughout this paper, let $\Q$ be the field of rational numbers, and $\Qv$ be the field of
rational functions in one variable $v$ over $\Q$.

\section{Commutation relations for quantum root vectors}\label{root vector}

As we know that, quantum root vectors  are  used in the computation of the PBW-basis (see \cite{Lus2}), the universal $R$-matrix of $\boldsymbol{U}_v(\mathfrak{g})$ (see \cite[Section 8.3]{CP}), and the canonical bases (see \cite{Lus4,Lus5,JCJ}).
In this section, we recall the definition of {$\Uvqn$} given in \cite[Proposition 5.2]{DW}  (also see \cite[Theorem 2.1]{DJ}) and the action of braid group associated to it. We then investigate the quantum root vectors and their commutation formulas.

\begin{defn}\label{defqn}\cite[Proposition 5.2]{DW}
The quantum queer superalgebra $\Uvqn$ is the (Hopf) superalgebra
over $\Qv$  generated by
even generators  {${\genK}_{i}$}, {${\genK}_{i}^{-1}$},  {${\genE}_{j}$},  {${\genF}_{j}$},
and odd generators  {${\genK}_{\ol{i}}$},  {${\genE}_{\ol{j}}$}, {${\genF}_{\ol{j}}$},
with  {$ 1 \le i \le n$}, {$ 1 \le j \le n-1$}, subjecting to the following relations:
\begin{align*}
({\rm QQ1})\quad
&	{\genK}_{i} {\genK}_{i}^{-1} = {\genK}_{i}^{-1} {\genK}_{i} = 1,  \qquad
	{\genK}_{i} {\genK}_{j} = {\genK}_{j} {\genK}_{i} , \qquad
	{\genK}_{i} {\genK}_{\ol{j}} = {\genK}_{\ol{j}} {\genK}_{i}, \\
&	{\genK}_{\ol{i}} {\genK}_{\ol{j}} + {\genK}_{\ol{j}} {\genK}_{\ol{i}}
	= 2 {\delta}_{i,j} \frac{{\genK}_{i}^2 - {\genK}_{i}^{-2}}{{v}^2 - {v}^{-2}}; \\
({\rm QQ2})\quad
& 	{\genK}_{i} {\genE}_{j} = {v}^{(\boldsymbol{\ep}_i, \alpha_j)} {\genE}_{j} {\genK}_{i}, \qquad
	{\genK}_{i} {\genE}_{\ol{j}} = {v}^{(\boldsymbol{\ep}_i, \alpha_j)} {\genE}_{\ol{j}} {\genK}_{i}, \\
& 	{\genK}_{i} {\genF}_{j} = {v}^{-(\boldsymbol{\ep}_i, \alpha_j)} {\genF}_{j} {\genK}_{i}, \qquad
	{\genK}_{i} {\genF}_{\ol{j}} = {v}^{-(\boldsymbol{\ep}_i, \alpha_j)} {\genF}_{\ol{j}} {\genK}_{i}; \\
({\rm QQ3})\quad
& {\genK}_{\ol{i}} {\genE}_{i} - {v} {\genE}_{i} {\genK}_{\ol{i}} = {\genE}_{\ol{i}} {\genK}_{i}^{-1}, \qquad
	{v} {\genK}_{\ol{i}} {\genE}_{i-1} -  {\genE}_{i-1} {\genK}_{\ol{i}} = - {\genK}_{i}^{-1} {\genE}_{\ol{i-1}}, \\
& {\genK}_{\ol{i}} {\genF}_{i} - {v} {\genF}_{i} {\genK}_{\ol{i}} = - {\genF}_{\ol{i}} {\genK}_{i}, \qquad
	{v} {\genK}_{\ol{i}} {\genF}_{i-1} -  {\genF}_{i-1} {\genK}_{\ol{i}} = {\genK}_{i} {\genF}_{\ol{i-1}},\\
& {\genK}_{\ol{i}} {\genE}_{\ol{i}} + {v} {\genE}_{\ol{i}} {\genK}_{\ol{i}} = {\genE}_{i} {\genK}_{i}^{-1}, \qquad
	{v} {\genK}_{\ol{i}} {\genE}_{\ol{i-1}} +  {\genE}_{\ol{i-1}} {\genK}_{\ol{i}} =   {\genK}_{i}^{-1} {\genE}_{i-1}, \\
& {\genK}_{\ol{i}} {\genF}_{\ol{i}} + {v} {\genF}_{\ol{i}} {\genK}_{\ol{i}} =   {\genF}_{i} {\genK}_{i}, \qquad
	{v} {\genK}_{\ol{i}} {\genF}_{\ol{i-1}} +  {\genF}_{\ol{i-1}} {\genK}_{\ol{i}} = {\genK}_{i} {\genF}_{i-1}, \\
&
 {\genK}_{\ol{i}} {\genE}_{j} - {\genE}_{j} {\genK}_{\ol{i}} =  {\genK}_{\ol{i}} {\genF}_{j} - {\genF}_{j} {\genK}_{\ol{i}}
 = {\genK}_{\ol{i}} {\genE}_{\ol{j}} + {\genE}_{\ol{j}} {\genK}_{\ol{i}} =  {\genK}_{\ol{i}} {\genF}_{\ol{j}} + {\genF}_{\ol{j}} {\genK}_{\ol{i}}
	= 0 \mbox{ for } j \ne i, i-1; \\
({\rm QQ4}) \quad
& {\genE}_{i} {\genF}_{j} - {\genF}_{j} {\genE}_{i}
	= \delta_{i,j}  \frac{{\genK}_{i} {\genK}_{i+1}^{-1} - {\genK}_{i}^{-1}{\genK}_{i+1}}{{v} - {v}^{-1}}, \\
&
{\genE}_{\ol{i}} {\genF}_{\ol{j}} + {\genF}_{\ol{j}} {\genE}_{\ol{i}}
	= \delta_{i,j}  ( \frac{{\genK}_{i} {\genK}_{i+1} - {\genK}_{i}^{-1} {\genK}_{i+1}^{-1}}{{v} - {v}^{-1}}
	 + ({v} - {v}^{-1}) {\genK}_{\ol{i}} {\genK}_{\ol{i+1}} )  ,\\
& {\genE}_{i} {\genF}_{\ol{j}} - {\genF}_{\ol{j}} {\genE}_{i}
	= \delta_{i,j}  ( {\genK}_{i+1}^{-1} {\genK}_{\ol{i}}  - {\genK}_{\ol{i+1}} {\genK}_{i}^{-1} ) , \qquad
 {\genE}_{\ol{i}} {\genF}_{j} - {\genF}_{j} {\genE}_{\ol{i}}
	= \delta_{i,j}  ( {\genK}_{i+1} {\genK}_{\ol{i}}  - {\genK}_{\ol{i+1}} {\genK}_{i} ) ;\\
({\rm QQ5}) \quad
&{\genE}_{\ol{i}}^2 = -\frac{ {v} - {v}^{-1} }{{v} + {v}^{-1}} {\genE}_{i}^2, \quad
	{\genF}_{\ol{i}}^2 = \frac{ {v} - {v}^{-1} }{{v} + {v}^{-1}} {\genF}_{i}^2, \\
&
{\genE}_{i} {\genE}_{\ol{j}} - {\genE}_{\ol{j}} {\genE}_{i}
	=  {\genF}_{i} {\genF}_{\ol{j}} - {\genF}_{\ol{j}} {\genF}_{i}
	= 0   \quad \mbox{ for } |i - j| \ne 1,
\\
&
{\genE}_{i} {\genE}_{j} - {\genE}_{j} {\genE}_{i} = {\genF}_{i} {\genF}_{j} - {\genF}_{j} {\genF}_{i}
= {\genE}_{\ol{i}}{\genE}_{\ol{j}}  + {\genE}_{\ol{j}}  {\genE}_{\ol{i}}= {\genF}_{\ol{i}} {\genF}_{\ol{j}}  + {\genF}_{\ol{j}} {\genF}_{\ol{i}}
	= 0 \quad \mbox{ for }|i-j|  > 1, 	
\\
& {\genE}_{i} {\genE}_{i+1} - {v} {\genE}_{i+1} {\genE}_{i}
	= {\genE}_{\ol{i}} {\genE}_{\ol{i+1}} + {v} {\genE}_{\ol{i+1}} {\genE}_{\ol{i}}, \qquad
  {\genE}_{i} {\genE}_{\ol{i+1}} - {v} {\genE}_{\ol{i+1}} {\genE}_{i}
	= {\genE}_{\ol{i}} {\genE}_{i+1} - {v} {\genE}_{i+1} {\genE}_{\ol{i}}, \\
& {\genF}_{i} {\genF}_{i+1} - {v} {\genF}_{i+1} {\genF}_{i}
	= - ({\genF}_{\ol{i}} {\genF}_{\ol{i+1}} + {v} {\genF}_{\ol{i+1}} {\genF}_{\ol{i}}), \qquad
  {\genF}_{i} {\genF}_{\ol{i+1}}  - {v} {\genF}_{\ol{i+1}}  {\genF}_{i}
	=  {\genF}_{\ol{i}} {\genF}_{i+1} - {v} {\genF}_{i+1} {\genF}_{\ol{i}} ;\\
({\rm QQ6}) \quad
& {\genE}_{i}^2 {\genE}_{j} - ( {v} + {v}^{-1} ) {\genE}_{i} {\genE}_{j} {\genE}_{i} + {\genE}_{j}  {\genE}_{i}^2 = 0, \qquad
	{\genF}_{i}^2 {\genF}_{j} - ( {v} + {v}^{-1} ) {\genF}_{i} {\genF}_{j} {\genF}_{i} + {\genF}_{j}  {\genF}_{i}^2 = 0, \\
& {\genE}_{i}^2 {\genE}_{\ol{j}} - ( {v} + {v}^{-1} ) {\genE}_{i} {\genE}_{\ol{j}} {\genE}_{i} + {\genE}_{\ol{j}}  {\genE}_{i}^2 = 0, \qquad
	{\genF}_{i}^2 {\genF}_{\ol{j}} - ( {v} + {v}^{-1} ) {\genF}_{i} {\genF}_{\ol{j}} {\genF}_{i} + {\genF}_{\ol{j}}  {\genF}_{i}^2 = 0,  \\
& \qquad \mbox{ where } \quad |i-j| = 1.
\end{align*}
\end{defn}

\begin{rem}
   Definition \ref{defqn} introduces the $\Z_2$ gradation for $\Uvqn$, which is in agreement with the $\Z_2$ gradation of the universal enveloping algebra $\Uqn$ of $\qn$ in the limit $v\to 1$, where $\Uqn$ is obtained by setting 
   \begin{align*}
       &\lim_{v\to 1}\genE_i=e_i,\quad \lim_{v\to 1}\genF_i=f_i,\quad \lim_{v\to 1}\frac{\genK_i-\genK_i^{-1}}{v-v^{-1}}=h_i,\quad \lim_{v\to 1}\genK_i^{\pm 1}=1,\\
       &\lim_{v\to 1}\genE_{\ol{i}}=e_{\ol{i}},\quad
       \lim_{v\to 1}\genF_{\ol{i}}=f_{\ol{i}},\quad \lim_{v\to 1}\genK_{\ol{i}}=h_{\ol{i}}.
   \end{align*}
\end{rem}

Following \cite[(5.6)]{DW}, there is an anti-involution 
$\Omega$ over {$\Uvqn$} with action given by 
\begin{equation}\label{omega}
\begin{aligned}
	&\Omega (v)  = v^{-1}, \quad
	\Omega(\genE_{{j}} ) =\genF_{{j}}, \quad \Omega ( \genF_{{j}} )=\genE_{{j}}, \quad \Omega ( \genK_{{i}} )=\genK_{{i}}^{-1}, \\
	&\Omega(  \genE_{\ol{j}}) =\genF_{\ol{j}}, \quad \Omega (\genF_{\ol{j}} ) =\genE_{\ol{j}}, 
		\quad \Omega (\genK_{\ol{i}}) =\genK_{\ol{i}},	
\end{aligned}
\end{equation}
where $1\le i \le n$, $1\le j \le n-1 $. 

 Let {$\Uv^{0}$} be the $\Qv$-subalgebra of {$\Uvqn$} generated by $\genK_i^{\pm 1}$ and $\genK_{\ol{i}} $ for $1 \le i \le n $, 
   and let {$\Uv^{+}$} (respectively. {$\Uv^{-}$}) be the $\Qv$-subalgebra of {$\Uvqn$} generated by $\genE_j$ and $ \genE_{\ol{j}} $ (respectively. $\genF_j, \genF_{\ol{j}} $) for $1 \le j \le n-1 $. Referring to \cite[Theorem 6.3]{CLZ}, the braid group action on $\Uvqn$ is defined as follows.

\begin{prop}\cite[Theorem 6.3]{CLZ}\label{action_uvqn}
The  braid group acts by {$\Qv$}-superalgebra automorphisms on {$\Uvqn$}.
More precisely, 
\begin{align*}
&
T_{j} ({\genK}^{\pm 1} _{i}  )  =   {\genK}^{\pm 1} _{s_{j}(i)}\quad
\mbox{ for}\enspace 1 \le j \le n-1,   \quad 1 \le i \le n,\\
&
	 T_{i} ({\genK}_{\ol{i-1}}  )  =   {\genK}_{\ol{i-1}},\quad
	T_{i} ({\genK}_{\ol{i}}  )  =   {\genK}_{\ol{i+1}} ,\quad
	T_{i} ({\genK}_{\ol{i+1}}  )  =  (v-v^{-1}) {\genK}_{\ol{i+1}}\genF_i\genE_i -(v-v^{-1})\genF_i\genE_i {\genK}_{\ol{i+1}} +\genK_{\ol{i}}; \\	
&
T_{i} ({\genE}_{i}) = - {\genF}_{i} {\genK}_{i} {\genK}_{i+1}^{-1} , \quad
T_{i} ({\genF}_{i}) = -  {\genK}_{i}^{-1} {\genK}_{i+1} {\genE}_{i} ,\\
&T_{i} ({\genE}_{\ol{i}}) = -{\genK}_{\ol{i+1}}{\genF}_{i}  {\genK}_{i} +v{\genF}_{i}{\genK}_{\ol{i+1}}  {\genK}_{i}, \quad
	T_{i} ({\genF}_{\ol{i}}) = - {\genK}_{i}^{-1} {\genE}_{i}{\genK}_{\ol{i+1}} +v^{-1} {\genK}_{\ol{i+1}}{\genK}_{i}^{-1} {\genE}_{i};\\
& 
T_{i} ({\genE}_{j}) 
=   -  {\genE}_{i} {\genE}_{j}  +   {v}^{-1} {\genE}_{j} {\genE}_{i}, \quad
T_{i} ({\genF}_{j}) 
=  -   {\genF}_{j} {\genF}_{i} +   {v} {\genF}_{i}  {\genF}_{j}  \quad \mbox{ for} \enspace |i - j| = 1, \\
	& 
	 T_{i} ({\genE}_{\ol{j}}) 
	=   -  {\genE}_{i} {\genE}_{\ol{j}}  +   {v}^{-1} {\genE}_{\ol{j}} {\genE}_{i}, \quad
	T_{i} ({\genF}_{\ol{j}}) 
	= -   {\genF}_{\ol{j}} {\genF}_{i} +   {v} {\genF}_{i}  {\genF}_{\ol{j}} 
	 \quad \mbox{ for} \enspace |i - j| = 1, \\
	&
    T_{i} ({\genE}_{j}) = {\genE}_{j} , \quad
T_{i} ({\genF}_{j}) = {\genF}_{j}, \quad
	T_{i} ({\genK}_{\ol{j}}  ) = {\genK}_{\ol{j}} , \quad
	T_{i} ({\genE}_{\ol{j}}) = {\genE}_{\ol{j}}, \quad
	T_{i} ({\genF}_{\ol{j}}) = {\genF}_{\ol{j}} 
	\quad \mbox{ for}\enspace |i - j| > 1.
\end{align*}
The inverse of $T_{i}$ is given by
\begin{align*}
&
T_{i}^{-1} ({\genK}^{\pm 1} _{i-1}  )  =   {\genK}^{\pm 1} _{i-1},\quad
T_{i}^{-1} ({\genK}^{\pm 1} _{i}  )  =   {\genK}^{\pm 1} _{i+1},\quad
T_{i}^{-1}({\genK}^{\pm 1} _{i+1}  )  =   {\genK}^{\pm 1} _{i}  , \quad
	T_{i}^{-1} ({\genK}_{\ol{i-1}}  )  =   {\genK}_{\ol{i-1}},\\
	&T_{i}^{-1} ({\genK}_{\ol{i+1}}  )  =   {\genK}_{\ol{i}} ,\quad
	T_{i}^{-1}  ({\genK}_{\ol{i}}  )=(v-v^{-1}) \genE_i \genF_i \genK_{\ol{i}} -(v-v^{-1}) \genE_i \genF_i \genK_{\ol{i}} +{\genK}_{\ol{i+1}}; \\	
&
T_{i}^{-1}({\genE}_{i}) =  - {\genK}_{i+1}{\genK}_{i}^{-1}{\genF}_{i}  , \quad
T_{i}^{-1}({\genF}_{i}) = -   {\genE}_{i} {\genK}_{i} {\genK}_{i+1}^{-1},\\
&T_{i}^{-1}  ({\genE}_{\ol{i}}) = -{\genK}_{{i+1}}{\genF}_{i}  {\genK}_{\ol{i}} +v{\genK}_{{i+1}}  {\genK}_{\ol{i}}{\genF}_{i}, \quad
	T_{i}^{-1}  ({\genF}_{\ol{i}}) = -{\genK}_{\ol{i}}{\genE}_{i} {\genK}_{i+1}^{-1}  +v^{-1} {\genE}_{i}{\genK}_{\ol{i}}{\genK}_{i+1}^{-1} ;\\	
& 
T_{i}^{-1}({\genE}_{j}) 
=   -  {\genE}_{j}{\genE}_{i}   +   {v}^{-1}  {\genE}_{i}{\genE}_{j}, \quad
T_{i}^{-1} ({\genF}_{j}) 
= -    {\genF}_{i} {\genF}_{j}+   {v}   {\genF}_{j} {\genF}_{i} \quad \mbox{ for} \enspace |i - j| = 1, \\
	& T_{i}^{-1} ({\genE}_{\ol{j}}) 
	=   -  {\genE}_{\ol{j}} {\genE}_{{i}}  +   {v}^{-1} {\genE}_{{i}} {\genE}_{\ol{j}}, \quad
	T_{i}^{-1} ({\genF}_{\ol{j}}) 
	= -   {\genF}_{{i}} {\genF}_{\ol{j}} +   {v} {\genF}_{\ol{j}}  {\genF}_{{i}}  \quad \mbox{ for} \enspace |i - j| = 1;\\
	&
    T_{i}^{-1} ({\genK}^{\pm 1} _{j}  ) = {\genK}^{\pm 1} _{j} , \quad
T_{i}^{-1} ({\genE}_{j}) = {\genE}_{j}, \quad
T_{i}^{-1} ({\genF}_{j}) = {\genF}_{j},\quad \mbox{ for}  \enspace |i - j| > 1,\\
	&T_{i}^{-1} ({\genK}_{\ol{j}}  ) = {\genK}_{\ol{j}} , \quad
	T_{i}^{-1} ({\genE}_{\ol{j}}) = {\genE}_{\ol{j}}, \quad
	T_{i}^{-1} ({\genF}_{\ol{j}}) = {\genF}_{\ol{j}} 
	\quad \mbox{ for}  \enspace |i - j| > 1.
\end{align*}
\end{prop}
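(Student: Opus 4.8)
The plan is to prove three statements in sequence: that each $T_i$, and each candidate inverse $T_i^{-1}$, is a well-defined $\Qv$-superalgebra endomorphism of $\Uvqn$; that the two are mutually inverse; and that the family $\{T_i\}$ satisfies the braid relations $T_iT_j=T_jT_i$ for $|i-j|>1$ together with $T_iT_{i+1}T_i=T_{i+1}T_iT_{i+1}$. Throughout I would exploit two structural simplifications. The even generators $\genK_i^{\pm1},\genE_j,\genF_j$ span a copy of the quantum group $\Uv(\mathfrak{gl}_n)$, and on them the listed formulas coincide with Lusztig's symmetries \cite{Lus2}; hence well-definedness, invertibility, and the braid relations are already known on the even subalgebra, and only the identities genuinely involving the odd generators $\genK_{\ol i},\genE_{\ol j},\genF_{\ol j}$ remain to be checked. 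Second, the anti-involution $\Omega$ of \eqref{omega}, which exchanges $\genE\leftrightarrow\genF$ and inverts $v$, conjugates the $\genE$-side data into the $\genF$-side data, so that every verification performed on the $\genE$-generators transports to its $\genF$-counterpart and the computation is essentially halved.

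For well-definedness it then suffices to check that $T_i$ sends each defining relation of Definition \ref{defqn} that contains an odd generator to a valid identity: the anticommutator of the $\genK_{\ol i}$ in (QQ1), the mixed relations (QQ3), the odd (anti)commutation relations (QQ4), the quadratic relations (QQ5), and the mixed Serre relations (QQ6). The essential input is the image $T_i(\genK_{\ol{i+1}})=(v-v^{-1})\genK_{\ol{i+1}}\genF_i\genE_i-(v-v^{-1})\genF_i\genE_i\genK_{\ol{i+1}}+\genK_{\ol i}$, which is not a monomial. For the $j=i+1$ instance of the anticommutator relation in (QQ1) I would substitute this expression and reduce $\{T_i(\genK_{\ol i}),T_i(\genK_{\ol{i+1}})\}$ using (QQ1)--(QQ4) until it collapses to the required value. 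Organising the checks by $\Z_2$-grading and by the support of each relation guarantees that only the indices $i-1,i,i+1$ ever interact, which keeps each computation finite and local.

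Invertibility I would verify directly on generators, showing that $T_i^{-1}T_i$ and $T_iT_i^{-1}$ fix every generator. For all generators except $\genK_{\ol i}$ and $\genK_{\ol{i+1}}$ this is immediate from the tabulated formulas; the two remaining cases require substituting the quadratic images and collapsing them by repeated application of (QQ3)--(QQ4).

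For the braid relations I would use locality. Since $T_i$ fixes every generator whose index lies outside $\{i-1,i,i+1\}$, the relation $T_iT_j=T_jT_i$ for $|i-j|>1$ is formal. The length-three relation $T_iT_{i+1}T_i=T_{i+1}T_iT_{i+1}$ involves only the indices $i,i+1,i+2$, so I would reduce to the sub-superalgebra $\Uv(\mathfrak{q}_3)$ and verify the identity on its finite generating set, the even-generator cases again being supplied by \cite{Lus2}. I expect the decisive difficulty to be the length-three relation evaluated on the odd Cartan generators $\genK_{\ol k}$: their images under $T_i$ and $T_{i+1}$ are quadratic in $\genE,\genF$, so both words $T_iT_{i+1}T_i(\genK_{\ol k})$ and $T_{i+1}T_iT_{i+1}(\genK_{\ol k})$ expand into long sums that agree only after systematic normal ordering via (QQ2)--(QQ4). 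This bookkeeping, rather than any conceptual issue, is the main obstacle, and it is where the super-signs must be tracked most carefully.
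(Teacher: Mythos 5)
The paper offers no proof of this proposition at all: it is imported verbatim from \cite[Theorem 6.3]{CLZ}, so there is nothing internal to compare your argument against. Your outline — checking that $T_i$ and the candidate $T_i^{-1}$ preserve each defining relation (QQ1)--(QQ6), verifying mutual inverseness on generators, using locality for $T_iT_j=T_jT_i$ with $|i-j|>1$ and a reduction to $\Uv(\mathfrak{q}_3)$ for the length-three relation, delegating the even part to Lusztig's symmetries and recovering the $\genF$-side from the $\genE$-side via $\Omega$ — is the standard route and is sound; the one point you should state precisely is the intertwining property $\Omega\circ T_i=T_i\circ\Omega$ (it does hold, and is verified by a short check on each generator, e.g.\ $\Omega(T_i(\genK_{\ol{i+1}}))=T_i(\genK_{\ol{i+1}})$ because $\Omega$ swaps the two $(v-v^{-1})$-terms while negating the scalar).
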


Recall the quantum root vector defined by the action of the braid group in \cite[Section 7]{CLZ}, associated with the reduced expression of the longest element $w_0$ in the symmetric group $\Sn$. With the fixed reduced expression $w_0=(s_1)\refdot(s_2s_1)\refdot(s_3s_2s_1)\refdot\cdots\refdot (s_{n-1}\cdots s_1)$, the following proposition is taken from \cite{CLZ}.

 \begin{prop}\cite[Corollary 7.4]{CLZ}\label{rv}
 Assume that $1\leq i< n$, $i+1<  j< n$. We have
\begin{align*}
     &{\genXE}_{i,i+1}=\genE_i,\quad 
      {\overline {\genXE}_{i,i+1}}=\genE_{\overline{i}},\quad
      {\genXF}_{i+1,i}=\genF_i,\quad 
      {\overline {\genXF}_{i+1,i }}=\genF_{\overline{i}},\\
     	&{\genXE}_{i,j}
     	=-{\genXE}_{i,j-1}{\genXE}_{j-1,j}+v^{-1}{\genXE}_{j-1,j}{\genXE}_{i,j-1} 
     	=[ 
     	   \cdots
     	   [ 
     	     [\genE_i, -\genE_{i+1}]_{v^{-1}}, -\genE_{i+2} 
     	              ]_{v^{-1}}, 
     	               \cdots ,-\genE_{j-1} ]_{v^{-1}},\\
     	&{\ol{\genE}}_{i,j}
     	=-{\genXE}_{i,j-1}{\ol {\genXE}_{j-1,j}}+v^{-1}{\ol {\genXE}_{j-1,j}}{\genXE}_{i,j-1}
     	 =[ 
     	 \cdots
     	    [ 
     	          [\genE_i, -\genE_{i+1}]_{v^{-1}}, -\genE_{i+2} 
     	                    ]_{v^{-1}}, 
     	 \cdots ,-\genE_{\ol{j-1}} ]_{v^{-1}},\\
     	 &{\genXF}_{j,i}
     	 =-{\genXF}_{j,j-1}{\genXF}_{j-1,i}+v{\genXF}_{j-1,i}{\genXF}_{j,j-1}
     	 =[-\genF_{j-1}, 
     	       [-\genF_{j-2}, 
     	          \cdots 
     	           [-\genF_{i+1}, \genF_i]_v 
     	                 ]_v 
     	                   \cdots
     	                                      ]_v,\\
     	  &{\ol{\genXF}}_{j,i}
     	  =-{\ol {\genXF}_{j,j-1}}{\genXF}_{j-1,i}+v{\genXF}_{j-1,i}{\ol {\genXF}_{j,j-1}}
     	 =[-\genF_{\ol{j-1}}, 
     	      [-\genF_{j-2}, 
     	        \cdots 
     	          [-\genF_{i+1}, \genF_i]_v 
     	                    ]_v 
     	                       \cdots
     	                             ]_v,
\end{align*}  
where $[a,b]_{x}=ab -xba$ with $a,b \in \Uvqn , x \in \Qv$.
\end{prop}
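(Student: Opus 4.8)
The plan is to read the recursions straight off the braid-group definition of the root vectors and then unwind them into nested $q$-commutators. Recall from \cite[Section 7]{CLZ} that, relative to the fixed reduced word $w_0=(s_1)\refdot(s_2s_1)\refdot\cdots\refdot(s_{n-1}\cdots s_1)$, the root vector attached to the positive root $\ep_i-\ep_j$ is obtained by applying the corresponding string of braid operators to a rank-one generator; concretely I will use the form
\[
\genXE_{i,j}=T_iT_{i+1}\cdots T_{j-2}(\genE_{j-1}),\qquad
\ol{\genXE}_{i,j}=T_iT_{i+1}\cdots T_{j-2}(\genE_{\ol{j-1}}),
\]
together with the analogous products $\genXF_{j,i}=T_iT_{i+1}\cdots T_{j-2}(\genF_{j-1})$ and $\ol{\genXF}_{j,i}=T_iT_{i+1}\cdots T_{j-2}(\genF_{\ol{j-1}})$. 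First I would confirm that this braid string carries the simple root $\al_{j-1}=\ep_{j-1}-\ep_j$ to $\ep_i-\ep_j$: applying $s_{j-2},s_{j-3},\dots,s_i$ successively lowers the first index from $j-1$ down to $i$, which is exactly the weight of $\genXE_{i,j}$, so the indexing agrees with the Lusztig labelling of root vectors along $w_0$.

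The heart of the argument is an induction on $j-i$ in which I peel off the innermost operator $T_{j-2}$. Writing $T_iT_{i+1}\cdots T_{j-2}=(T_iT_{i+1}\cdots T_{j-3})\circ T_{j-2}$ and invoking the $|i-j|=1$ formula of Proposition \ref{action_uvqn},
\[
T_{j-2}(\genE_{j-1})=-\genE_{j-2}\genE_{j-1}+v^{-1}\genE_{j-1}\genE_{j-2}=[\genE_{j-2},-\genE_{j-1}]_{v^{-1}}.
\]
Since each $T_k$ with $i\le k\le j-3$ has $|k-(j-1)|>1$, the remaining string fixes $\genE_{j-1}$, while by the definition with $j$ replaced by $j-1$ it sends $\genE_{j-2}$ to $\genXE_{i,j-1}$. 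As the $T_k$ are $\Qv$-superalgebra automorphisms of $\Uvqn$, they preserve products and hence the bracket $[\,\cdot\,,\cdot\,]_{v^{-1}}$; applying $T_iT_{i+1}\cdots T_{j-3}$ therefore gives
\[
\genXE_{i,j}=[\genXE_{i,j-1},-\genE_{j-1}]_{v^{-1}}=-\genXE_{i,j-1}\genXE_{j-1,j}+v^{-1}\genXE_{j-1,j}\genXE_{i,j-1},
\]
after substituting $\genE_{j-1}=\genXE_{j-1,j}$. This is the asserted recursion, and iterating down to the base case $\genXE_{i,i+1}=\genE_i$ produces the nested $q$-commutator. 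The odd vector $\ol{\genXE}_{i,j}$ is handled verbatim, replacing $\genE_{j-1}$ by $\genE_{\ol{j-1}}$ and using $T_{j-2}(\genE_{\ol{j-1}})=[\genE_{j-2},-\genE_{\ol{j-1}}]_{v^{-1}}$ together with $T_k(\genE_{\ol{j-1}})=\genE_{\ol{j-1}}$ for $|k-(j-1)|>1$.

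The two $\genF$-cases run in parallel but accumulate from the left, because the relevant rule now reads $T_{j-2}(\genF_{j-1})=-\genF_{j-1}\genF_{j-2}+v\genF_{j-2}\genF_{j-1}=[-\genF_{j-1},\genF_{j-2}]_v$. Peeling off $T_{j-2}$ and applying $T_iT_{i+1}\cdots T_{j-3}$, which fixes $\genF_{j-1}$ and sends $\genF_{j-2}$ to $\genXF_{j-1,i}$, yields $\genXF_{j,i}=[-\genXF_{j,j-1},\genXF_{j-1,i}]_v$ with $\genXF_{j,j-1}=\genF_{j-1}$, and likewise $\ol{\genXF}_{j,i}=[-\genF_{\ol{j-1}},\genXF_{j-1,i}]_v$ after replacing $\genF_{j-1}$ by $\genF_{\ol{j-1}}$. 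Unwinding these recursions gives the stated left-nested brackets.

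I expect the only genuinely delicate point to be the bookkeeping that matches the positive root $\ep_i-\ep_j$ with the precise braid string $T_iT_{i+1}\cdots T_{j-2}$ dictated by the reduced expression of $w_0$, together with the verification that the distant operators act as the identity on the generator carrying the surviving simple root; both reduce to the $|i-j|>1$ relations in Proposition \ref{action_uvqn} and the standard indexing of root vectors along a reduced word. The super-structure introduces no extra signs here, since the braid operators are superalgebra automorphisms, so that $T_k(ab)=T_k(a)T_k(b)$ without extra signs, and are therefore compatible with the non-super bracket $[a,b]_x=ab-xba$ used throughout; the odd generators enter only through the action formulas already recorded in Proposition \ref{action_uvqn}. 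Once the indexing is fixed, the induction on $j-i$ is routine.
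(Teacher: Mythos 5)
This proposition is imported verbatim from \cite[Corollary 7.4]{CLZ}; the present paper gives no proof of it, so there is no internal argument to compare yours against. That said, your computational core is correct and is surely the intended route: peeling off $T_{j-2}$, using $T_{j-2}(\genE_{j-1})=-\genE_{j-2}\genE_{j-1}+v^{-1}\genE_{j-1}\genE_{j-2}=[\genE_{j-2},-\genE_{j-1}]_{v^{-1}}$ together with $T_k(\genE_{j-1})=\genE_{j-1}$ for $k\le j-3$, and the fact that each $T_k$ is an (even) superalgebra automorphism and hence preserves the bracket $[a,b]_x$ without Koszul signs, does produce the recursion $\genXE_{i,j}=[\genXE_{i,j-1},-\genE_{j-1}]_{v^{-1}}$ and its three companions; unwinding to the base case gives the nested $q$-commutators, and all four sign/parameter patterns check out against Proposition \ref{action_uvqn}.

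The step you have not actually justified is your starting point $\genXE_{i,j}=T_iT_{i+1}\cdots T_{j-2}(\genE_{j-1})$. In the Lusztig scheme the root vector attached to $\ep_i-\ep_j$ is $T_{i_1}\cdots T_{i_{k-1}}(\genE_{i_k})$ for the specific prefix of the fixed reduced word $w_0=(s_1)\refdot(s_2s_1)\refdot\cdots\refdot(s_{n-1}\cdots s_1)$ ending where $\ep_i-\ep_j$ appears; that prefix is much longer than $T_iT_{i+1}\cdots T_{j-2}$, and discarding the superfluous operators requires actually commuting them past the surviving generator (via the $|i-j|>1$ rules and identities of the type $T_{a}T_{a+1}(\genE_{a})$ invoked elsewhere through \cite[Remark 6.5]{CLZ}). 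Your justification is only that the weights match, and a weight computation cannot pin down an element of $\Uv^{+}$: already in weight $\alpha_1+\alpha_2$ the space is two-dimensional. The identity you assume is precisely the ``direct calculation'' the paper records separately as Remark \ref{EX}; conversely, if one takes the recursions of \cite[Corollary 7.4]{CLZ} as the definition, your argument establishes Remark \ref{EX} rather than Proposition \ref{rv}. The missing verification is routine, but it is the only step with genuine content here, so it needs to be written out rather than inferred from weights.
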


\begin{cor}\label{rv_k}
    The following hold for  $1\leq i<k<j<n$:
    \begin{align*}
&{\genXE}_{i,j}=-{\genXE}_{i,k}{\genXE}_{k,j}+v^{-1}{\genXE}_{k,j}{\genXE}_{i,k},\quad {\ol{\genE}}_{i,j}=-{\genXE}_{i,k}{\ol {\genXE}_{k,j}}+v^{-1}{\ol {\genXE}_{k,j}}{\genXE}_{i,k},\\
&{\genXF}_{j,i}=-{\genXF}_{j,k}{\genXF}_{k,i}+v{\genXF}_{j-1,i}{\genXF}_{j,k},\quad {\ol{\genXF}}_{j,i}=-{\ol {\genXF}_{j,k}}{\genXF}_{k,i}+v{\genXF}_{k,i}{\ol {\genXF}_{j,k}}.
    \end{align*}
\end{cor}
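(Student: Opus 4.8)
The plan is to deduce the two $\genF$-identities from the two $\genE$-identities by means of the anti-involution $\Omega$ of \eqref{omega}, and to prove the $\genE$-identities directly from the left-nested-commutator description in Proposition \ref{rv}. Write $D_a(x)=[x,-\genE_a]_{v^{-1}}=-x\genE_a+v^{-1}\genE_a x$ and $\ol D_a(x)=[x,-\genE_{\ol a}]_{v^{-1}}$, so that Proposition \ref{rv} becomes $\genXE_{i,j}=D_{j-1}\cdots D_{i+1}(\genE_i)$ and $\ol\genE_{i,j}=\ol D_{j-1}D_{j-2}\cdots D_{i+1}(\genE_i)$, and likewise $\genXE_{k,j}=D_{j-1}\cdots D_{k+1}(\genE_k)$, $\ol\genE_{k,j}=\ol D_{j-1}\cdots D_{k+1}(\genE_k)$. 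In particular $\genXE_{i,j}=D_{j-1}\cdots D_{k+1}(\genXE_{i,k+1})$ and $\ol\genE_{i,j}=\ol D_{j-1}D_{j-2}\cdots D_{k+1}(\genXE_{i,k+1})$, while the base step of Proposition \ref{rv} gives $\genXE_{i,k+1}=-\genXE_{i,k}\genE_k+v^{-1}\genE_k\genXE_{i,k}$.

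The one structural input needed is a commutativity lemma: for every index $a\ge k+1$ one has $[\genE_a,\genXE_{i,k}]=0$ and $[\genE_{\ol a},\genXE_{i,k}]=0$. I would prove this by induction on $k-i$: the vector $\genXE_{i,k}$ lies in the subalgebra generated by $\genE_i,\dots,\genE_{k-1}$, each of whose indices differs from $a\ge k+1$ by at least $2$, so the relations $\genE_b\genE_a=\genE_a\genE_b$ and $\genE_b\genE_{\ol a}=\genE_{\ol a}\genE_b$ for $|a-b|>1$ from (QQ5) let the commutator pass through the defining bracket $\genXE_{i,k}=[\genXE_{i,k-1},-\genE_{k-1}]_{v^{-1}}$. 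Granting this, and using that $\genXE_{i,k}$ is even (so no Koszul sign intervenes), a direct check gives the pull-out rules $D_a(\genXE_{i,k}\,y)=\genXE_{i,k}D_a(y)$ and $D_a(y\,\genXE_{i,k})=D_a(y)\genXE_{i,k}$ for $a\ge k+1$, together with the identical rules for $\ol D_a$.

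With these in hand the even identity falls out by applying $D_{j-1}\cdots D_{k+1}$ to $\genXE_{i,k+1}=-\genXE_{i,k}\genE_k+v^{-1}\genE_k\genXE_{i,k}$ and pulling the even factor $\genXE_{i,k}$ outside every operator, which leaves
\[
\genXE_{i,j}=-\genXE_{i,k}\,\bigl(D_{j-1}\cdots D_{k+1}(\genE_k)\bigr)+v^{-1}\bigl(D_{j-1}\cdots D_{k+1}(\genE_k)\bigr)\,\genXE_{i,k};
\]
since $D_{j-1}\cdots D_{k+1}(\genE_k)=\genXE_{k,j}$ this is exactly $\genXE_{i,j}=-\genXE_{i,k}\genXE_{k,j}+v^{-1}\genXE_{k,j}\genXE_{i,k}$. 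Replacing the outermost operator $D_{j-1}$ by $\ol D_{j-1}$ runs verbatim, with $D_{j-1}\cdots D_{k+1}(\genE_k)$ replaced by $\ol D_{j-1}\cdots D_{k+1}(\genE_k)=\ol\genE_{k,j}$, and yields $\ol\genE_{i,j}=-\genXE_{i,k}\ol\genE_{k,j}+v^{-1}\ol\genE_{k,j}\genXE_{i,k}$.

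Finally I would obtain the two $\genF$-identities by applying $\Omega$. From \eqref{omega} and the bracket rule $\Omega([a,b]_{v^{-1}})=[\Omega(b),\Omega(a)]_v$ (the Koszul sign is trivial throughout, since each bracket pairs an even element with a single generator), the nested-commutator formulas give $\Omega(\genXE_{i,j})=\genXF_{j,i}$ and $\Omega(\ol\genE_{i,j})=\ol\genXF_{j,i}$; applying the order-reversing $\Omega$ to the two $\genE$-identities and using $\Omega(v)=v^{-1}$ then produces $\genXF_{j,i}=-\genXF_{j,k}\genXF_{k,i}+v\genXF_{k,i}\genXF_{j,k}$ and $\ol\genXF_{j,i}=-\ol\genXF_{j,k}\genXF_{k,i}+v\genXF_{k,i}\ol\genXF_{j,k}$. (The second summand in the stated $\genXF_{j,i}$-identity should read $v\,\genXF_{k,i}\genXF_{j,k}$; the printed $\genXF_{j-1,i}$ is evidently a typo, as the $k=j-1$ case recovers the base formula of Proposition \ref{rv}.) I expect the only genuine friction to be the commutativity lemma and the parity bookkeeping in the pull-out rules; once these are in place the assembly is forced.
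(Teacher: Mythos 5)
Your proof is correct and follows essentially the same route as the paper's: both rest on the nested-bracket description of Proposition \ref{rv} together with the far-commutation relations in (QQ5), which let the even factor $\genXE_{i,k}$ pass through every generator of index at least $k+1$ — the paper organizes this as a downward induction on $k$, while you package it as a single pull-through of the operators $D_a$, $\ol D_a$, with $\Omega$ transferring the identities to the negative root vectors in either version. Your observation that the third displayed formula should read $v\,\genXF_{k,i}\genXF_{j,k}$ rather than $v\,\genXF_{j-1,i}\genXF_{j,k}$ is also correct; it is a typo, as applying $\Omega$ to the first identity (or setting $k=j-1$ and comparing with Proposition \ref{rv}) shows.
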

\begin{proof}
    Suppose $1\leq i<k<j<n$. We prove the second formula by induction on $k$. Indeed, if $k=j-2$, then
{\small\begin{align*}
    {\ol{\genE}}_{i,j}&=-{\genXE}_{i,j-1}{\ol {\genXE}_{j-1,j}}+v^{-1}{\ol {\genXE}_{j-1,j}}{\genXE}_{i,j-1}\\
    &=-(-{\genXE}_{i,j-2}{\genXE}_{j-2,j-1}+v^{-1}{\genXE}_{j-2,j-1}{\genXE}_{i,j-2}){\ol {\genXE}_{j-1,j}}+v^{-1}{\ol {\genXE}_{j-1,j}}(-{\genXE}_{i,j-2}{\genXE}_{j-2,j-1}+v^{-1}{\genXE}_{j-2,j-1}{\genXE}_{i,j-2})\\
    &={\genXE}_{i,j-2}{\genXE}_{j-2,j-1}{\ol {\genXE}_{j-1,j}}-v^{-1}{\genXE}_{j-2,j-1}{\genXE}_{i,j-2}{\ol {\genXE}_{j-1,j}}
    -v^{-1}{\ol {\genXE}_{j-1,j}}{\genXE}_{i,j-2}{\genXE}_{j-2,j-1}+v^{-2}{\ol {\genXE}_{j-1,j}}{\genXE}_{j-2,j-1}{\genXE}_{i,j-2}\\
    &={\genXE}_{i,j-2}{\genXE}_{j-2,j-1}{\ol {\genXE}_{j-1,j}}-v^{-1}{\genXE}_{j-2,j-1}{\ol {\genXE}_{j-1,j}}{\genXE}_{i,j-2}
    -v^{-1}{\genXE}_{i,j-2}{\ol {\genXE}_{j-1,j}}{\genXE}_{j-2,j-1}+v^{-2}{\ol {\genXE}_{j-1,j}}{\genXE}_{j-2,j-1}{\genXE}_{i,j-2}\\
    &=-{\genXE}_{i,j-2}{\ol {\genXE}_{j-2,j}}+v^{-1}{\ol {\genXE}_{j-2,j}}{\genXE}_{i,j-2},
\end{align*}}
by Proposition \ref{rv}. Now assume $i<k<j-2$, then by induction, we have 
{\small\begin{align*}
    {\ol{\genE}}_{i,j}&=-{\genXE}_{i,k+1}{\ol {\genXE}_{k+1,j}}+v^{-1}{\ol {\genXE}_{k+1,j}}{\genXE}_{i,k+1}\\
    &=-(-{\genXE}_{i,k}{\genXE}_{k,k+1}+v^{-1}{\genXE}_{k,k+1}{\genXE}_{i,k}){\ol {\genXE}_{k+1,j}}+v^{-1}{\ol {\genXE}_{k+1,j}}(-{\genXE}_{i,k}{\genXE}_{k,k+1}+v^{-1}{\genXE}_{k,k+1}{\genXE}_{i,k})\\
    &={\genXE}_{i,k}{\genXE}_{k,k+1}{\ol {\genXE}_{k+1,j}}-v^{-1}{\genXE}_{k,k+1}{\genXE}_{i,k}{\ol {\genXE}_{k+1,j}}-v^{-1}{\ol {\genXE}_{k+1,j}}{\genXE}_{i,k}{\genXE}_{k,k+1}+v^{-2}{\ol {\genXE}_{k+1,j}}{\genXE}_{k,k+1}{\genXE}_{i,k}\\
    &={\genXE}_{i,k}{\genXE}_{k,k+1}{\ol {\genXE}_{k+1,j}}-v^{-1}{\genXE}_{k,k+1}{\ol {\genXE}_{k+1,j}}{\genXE}_{i,k}-v^{-1}{\genXE}_{i,k}{\ol {\genXE}_{k+1,j}}{\genXE}_{k,k+1}+v^{-2}{\ol {\genXE}_{k+1,j}}{\genXE}_{k,k+1}{\genXE}_{i,k}\\
    &=-{\genXE}_{i,k}{\ol {\genXE}_{k,j}}+v^{-1}{\ol {\genXE}_{k,j}}{\genXE}_{i,k}
\end{align*}}
as desired. By a parallel argument, we can prove the remaining three formulas.
\end{proof}

\begin{rem}\label{EX}
A direct calculation shows that 
\begin{align*}
  &{\genXE}_{i,j}=T_i T_{i+1}\cdots T_{j-2}(\genE_{j-1}),\quad   {\genXE}_{j,i}=T_i T_{i+1}\cdots T_{j-2}(\genF_{j-1}),\\
  &{\ol{\genXE}}_{i,j}=T_i T_{i+1}\cdots T_{j-2}(\genE_{\ol{j-1}}),\quad   {\ol{\genXE}}_{j,i}=T_i T_{i+1}\cdots T_{j-2}(\genF_{\ol{j-1}}).
\end{align*}
Recall the root vectors defined in \cite[Section 5]{DW} (by replacing quantum parameter $q$ with $v$):
for $1\leq i\leq n-1$,  define
$$
X_{i,i+1}=\genE_i,\quad X_{i+1,i}=\genF_i, \quad  \ol{X}_{i,i+1}=\genE_{\ol{i}},\quad  \ol{X}_{i+1,i}=\genF_{\ol{i}};
$$
for $|j-i|>1$,  define
\begin{equation}\label{q-root}
\aligned
	X_{i,j}&:=\left\{
\begin{array}{ll}
X_{i,k}X_{k,j} - {v} X_{k,j}X_{i,k}&\text{ if }i<j,\\
X_{i,k} X_{k,j} - {v}^{-1} X_{k,j}X_{i,k}&\text{ if }i>j,
\end{array}
\right.\\
	\ol{X}_{i,j} &:=\left\{
\begin{array}{ll}
X_{i,k}  \ol{X}_{k,j}-{v}  \ol{X}_{k,j}X_{i,k}&\text{ if }i<j,\\
 \ol{X}_{i,k} X_{k,j}-{v}^{-1}X_{k,j}  \ol{X}_{i,k}&\text{ if }i>j,
\end{array}
\right.
\endaligned
\end{equation}
{where $k$ is strictly between $i$ and $j$, and the expressions are independent of the choice of $k$.}
Referring to Proposition \ref{rv}, 
{it is seen that $X_{i,j}$, $\ol{X}_{i,j}$ differs ${\genXE}_{i, j}$, $ \ol{\genXE}_{i, j}$ by replacing  $v$ with {${v}^{-1}$} and multiplying $-1$.}
\end{rem}

 As a natural consequence, we provide commutation formulas for all quantum root vectors of $\Uvqn$.

 \begin{lem}\label{KE}
 Assume that $1\leq i,j,a\leq n$ satisfies $i<j$. Then
     \begin{align*}
\aligned
	\genK_{\ol{a}}{\genXE}_{i,j}&=\left\{
\begin{array}{ll}
{\genXE}_{i,j}\genK_{\ol{a}} &(a<i \text{ or } i<a<j \text{ or } a>j ),\\
v{\genXE}_{i,j}\genK_{\ol{i}}-\genE_{\ol{i}}{\genXE}_{i+1,j}\genK_i^{-1}+v^{-1} {\genXE}_{i+1,j}\genE_{\ol{i}}\genK_i^{-1}&(a=i),\\
v^{-1}{\genXE}_{i,j}\genK_{\ol{j}}-{\ol{\genE}}_{i,j}\genK_j^{-1} &(a=j);
\end{array}
\right.\\
\genK_{\ol{a}}{\ol{\genXE}}_{i,j}&=\left\{
\begin{array}{ll}
-{\ol{\genXE}}_{i,j}\genK_{\ol{a}} &(a<i \text{ or } i<a<j \text{ or } a>j ),\\
-v{\ol{\genXE}}_{i,j}\genK_{\ol{i}}-\genE_{\ol{i}}{\ol{\genXE}}_{i+1,j}\genK_i^{-1}-v^{-1} {\ol{\genXE}}_{i+1,j}\genE_{\ol{i}}\genK_i^{-1} &(a=i),\\
-v^{-1}{\ol{\genXE}}_{i,j}\genK_{\ol{j}}+{{\genXE}}_{i,j}\genK_j^{-1} &(a=j).
\end{array}
\right.
\endaligned
\end{align*}
 \end{lem}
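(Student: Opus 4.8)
The plan is to prove the two displays simultaneously by induction on $d=j-i$, feeding the recursion for the root vectors from Corollary \ref{rv_k} into the base relations (QQ2) and (QQ3) of Definition \ref{defqn}. The guiding principle is that $\genK_{\ol a}$ and ${\ol{\genXE}}_{i,j}$ are odd whereas ${\genXE}_{i,j}$ is even, so commuting $\genK_{\ol a}$ past an ${\ol{\genXE}}$-type factor produces a sign; this is exactly the origin of the alternating signs that distinguish the ${\genXE}_{i,j}$ display from the ${\ol{\genXE}}_{i,j}$ display.

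For the base case $d=1$ one has ${\genXE}_{i,i+1}=\genE_i$ and ${\ol{\genXE}}_{i,i+1}=\genE_{\ol i}$, and every assertion is a direct reading of (QQ3): the regimes $a<i$ and $a>j$ use $\genK_{\ol a}\genE_i=\genE_i\genK_{\ol a}$ and $\genK_{\ol a}\genE_{\ol i}=-\genE_{\ol i}\genK_{\ol a}$ (valid for $i\neq a,a-1$), while $a=i$ and $a=j=i+1$ are the four mixed relations of (QQ3), after using (QQ2) to pass the even Cartan factor through, for instance $\genK_j^{-1}\genE_{\ol{j-1}}=v\,\genE_{\ol{j-1}}\genK_j^{-1}$. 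The only other case requiring a bare-hands check is $d=2$ with $a=i+1$, obtained by substituting (QQ2) and (QQ3) into ${\genXE}_{i,i+2}=-\genE_i\genE_{i+1}+v^{-1}\genE_{i+1}\genE_i$ and its odd analogue.

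For the inductive step I would expand the root vector by Corollary \ref{rv_k} with an intermediate index $k$ chosen according to the position of $a$, so that $\genK_{\ol a}$ meets each of the two factors only through an already-proved instance of the lemma. For $a=i$ I take $k=i+1$, giving ${\genXE}_{i,j}=-\genE_i X+v^{-1}X\genE_i$ with $X={\genXE}_{i+1,j}$; then $\genK_{\ol i}$ commutes with $X$ (the $a<i'$ case with $i'=i+1$), the generator relation $\genK_{\ol i}\genE_i=v\genE_i\genK_{\ol i}+\genE_{\ol i}\genK_i^{-1}$ supplies the mixed terms, and (QQ2) shows $\genK_i^{-1}$ commutes with $X$ because the pairing $(\boldsymbol\ep_i,\boldsymbol\ep_{i+1}-\boldsymbol\ep_j)$ vanishes. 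For $a=j$ I take $k=j-1$, so the short factor is $\genE_{j-1}$ and one uses $\genK_{\ol j}\genE_{j-1}=v^{-1}\genE_{j-1}\genK_{\ol j}-v^{-1}\genK_j^{-1}\genE_{\ol{j-1}}$. For an interior $a$ with $d\geq 3$ I take $k=a+1$ when $a\leq j-2$ and $k=a-1$ when $a\geq i+2$, so that $a$ is an interior or exterior index of each factor and induction yields (anti)commutation of $\genK_{\ol a}$ with both factors at once; for $a<i$ or $a>j$ any admissible $k$ works for the same reason. In each regime one substitutes the inductive formulas into the two-term expansion, gathers the summands ending in $\genK_{\ol a}$ — which collapse to the leading term once one recognises the combinations $-v\genE_i X+X\genE_i=v\,{\genXE}_{i,j}$ and $v\genE_i\ol X-\ol X\genE_i=-v\,{\ol{\genXE}}_{i,j}$ (with $\ol X={\ol{\genXE}}_{i+1,j}$) coming from the defining recursion — and simplifies the remaining $\genK_a^{-1}$-terms by (QQ2) until they reproduce $-\genE_{\ol i}{\genXE}_{i+1,j}\genK_i^{-1}+v^{-1}{\genXE}_{i+1,j}\genE_{\ol i}\genK_i^{-1}$ for $a=i$, or $-{\ol{\genE}}_{i,j}\genK_j^{-1}$ for $a=j$, the latter being exactly the recursion defining ${\ol{\genE}}_{i,j}$.

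The main obstacle is organizational rather than conceptual: one must keep the super-signs consistent each time the odd element $\genK_{\ol a}$ is moved past an odd vector ($\genE_{\ol i}$ or an ${\ol{\genXE}}$-factor), and must regroup the four terms produced by each expansion so that the $\genK_{\ol a}$-terms collapse to a single multiple of the root-vector recursion while the $\genK_a^{-1}$-terms assemble into the stated right-hand side. A subsidiary but essential point is the repeated use of (QQ2) to slide the even Cartan elements $\genK_a^{\pm 1}$ through the short root vectors; the stated formulas carry no spurious $v$-powers precisely because $(\boldsymbol\ep_a,\boldsymbol\ep_{i'}-\boldsymbol\ep_{j'})=0$ whenever $a\notin\{i',j'\}$, and verifying these vanishings across the subcases is what makes the bookkeeping lengthy.
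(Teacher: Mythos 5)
Your proposal is correct and follows essentially the same route as the paper: both expand the root vectors via Corollary \ref{rv_k} at an intermediate index adapted to the position of $a$ ($k=i+1$ for $a=i$, $k=j-1$ for $a=j$), reduce the interior case to the hand computation $\genK_{\ol{a}}{\genXE}_{a-1,a+1}={\genXE}_{a-1,a+1}\genK_{\ol{a}}$, and feed in (QQ2)--(QQ3) to collapse the $\genK_{\ol{a}}$-terms back onto the defining recursion. The paper phrases the interior step as a double decomposition through $a-1$ and $a+1$ rather than your single-$k$ induction on $j-i$, but this is only a cosmetic difference.
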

 \begin{proof}
    Suppose $1\leq i,j,a\leq n$ and $i<j$. 
    By the last formula in (QQ3), if $a<i$ or $a>j$, it is easy to check that $\genK_{\ol{a}}{\genXE}_{i,j}={\genXE}_{i,j}\genK_{\ol{a}}$.
    Using the relations in (QQ3) and the formulas in Corollary \ref{rv_k}, a straightforward calculation shows that
    \begin{align*}
\genK_{\ol{i}}{\genXE}_{i,j}
&=\genK_{\ol{i}}(-\genE_i {\genXE}_{i+1,j}+v^{-1}{\genXE}_{i+1,j}\genE_i)\\
&=-\genK_{\ol{i}}\genE_i {\genXE}_{i+1,j}+v^{-1}{\genXE}_{i+1,j}\genK_{\ol{i}}\genE_i\\
&=-(v\genE_i\genK_{\ol{i}}+\genE_{\ol{i}}\genK_i^{-1}) {\genXE}_{i+1,j}+v^{-1}{\genXE}_{i+1,j}(v\genE_i\genK_{\ol{i}}+\genE_{\ol{i}}\genK_i^{-1})\\
&=-v\genE_i\genK_{\ol{i}}{\genXE}_{i+1,j}-\genE_{\ol{i}}\genK_i^{-1}{\genXE}_{i+1,j}+{\genXE}_{i+1,j}\genE_i\genK_{\ol{i}}+v^{-1}{\genXE}_{i+1,j}\genE_{\ol{i}}\genK_i^{-1}\\
&=-v\genE_i{\genXE}_{i+1,j}\genK_{\ol{i}}+{\genXE}_{i+1,j}\genE_i\genK_{\ol{i}}-\genE_{\ol{i}}\genK_i^{-1}{\genXE}_{i+1,j}+v^{-1}{\genXE}_{i+1,j}\genE_{\ol{i}}\genK_i^{-1}\\
&=v{\genXE}_{i,j}\genK_{\ol{i}}-\genE_{\ol{i}}{\genXE}_{i+1,j}\genK_i^{-1}+v^{-1}{\genXE}_{i+1,j}\genE_{\ol{i}}\genK_i^{-1},\\
\genK_{\ol{j}}{\genXE}_{i,j}
&=\genK_{\ol{j}}(-{\genXE}_{i,j-1}\genE_{j-1}+v^{-1}\genE_{j-1}{\genXE}_{i,j-1})\\
&=-{\genXE}_{i,j-1}\genK_{\ol{j}}\genE_{j-1}+v^{-1}\genK_{\ol{j}}\genE_{j-1}{\genXE}_{i,j-1}\\
&=-{\genXE}_{i,j-1}(v^{-1}\genE_{j-1}\genK_{\ol{j}}-v^{-1}\genK_j^{-1}\genE_{\ol{j-1}})+v^{-1}(v^{-1}\genE_{j-1}\genK_{\ol{j}}-v^{-1}\genK_j^{-1}\genE_{\ol{j-1}}){\genXE}_{i,j-1}\\
&=-v^{-1}{\genXE}_{i,j-1}\genE_{j-1}\genK_{\ol{j}}+v^{-1}{\genXE}_{i,j-1}\genK_j^{-1}\genE_{\ol{j-1}}+v^{-2}\genE_{j-1}{\genXE}_{i,j-1}\genK_{\ol{j}}-v^{-2}\genK_j^{-1}\genE_{\ol{j-1}}{\genXE}_{i,j-1}\\
&=v^{-1}{\genXE}_{i,j}\genK_{\ol{j}}+{\genXE}_{i,j-1}\genK_{\ol{j-1}}\genK_j^{-1}-v^{-1}\genE_{\ol{j-1}}{\genXE}_{i,j-1}\genK_j^{-1}\\
&=v^{-1}{\genXE}_{i,j}\genK_{\ol{j}}-{\ol{\genXE}}_{i,j}\genK_j^{-1},\\
\genK_{\ol{a}}{\genXE}_{a-1,a+1}
&=\genK_{\ol{a}}(-\genE_{a-1}\genE_{a}+v^{-1}\genE_{a}\genE_{a-1})\\
&=-\genK_{\ol{a}}\genE_{a-1}\genE_{a}+v^{-1}\genK_{\ol{a}}\genE_{a}\genE_{a-1}\\
&=-(v^{-1}\genE_{a-1}\genK_{\ol{a}}-v^{-1}\genK_a^{-1}\genE_{\ol{a-1}})\genE_{a}+v^{-1}(v\genE_{a}\genK_{\ol{a}}+\genE_{\ol{a}}\genK_a^{-1})\genE_{a-1}\\
&=-v^{-1}\genE_{a-1}(v\genE_{a}\genK_{\ol{a}}+\genE_{\ol{a}}\genK_a^{-1})+v^{-1}\genK_a^{-1}\genE_{\ol{a-1}}\genE_{a}\\
&\qquad +\genE_{a}(v^{-1}\genE_{a-1}\genK_{\ol{a}}-v^{-1}\genK_a^{-1}\genE_{\ol{a-1}})+v^{-1}\genE_{\ol{a}}\genK_a^{-1}\genE_{a-1}\\
&=-\genE_{a-1}\genE_{a}\genK_{\ol{a}}-v^{-1}\genE_{a-1}\genE_{\ol{a}}\genK_a^{-1}+v^{-1}\genE_{\ol{a-1}}\genE_{a}\genK_a^{-1}\\
&\qquad+v^{-1}\genE_{a}\genE_{a-1}\genK_{\ol{a}}-\genE_{a}\genE_{\ol{a-1}}\genK_a^{-1}+\genE_{\ol{a}}\genE_{a-1}\genK_a^{-1}\\
&={\genXE}_{a-1,a+1}\genK_{\ol{a}}.
 \end{align*}
For the case $i<a<j$, we have 
\begin{align*}
    \genK_{\ol{a}}{\genXE}_{i,j}
&=\genK_{\ol{a}}(-{\genXE}_{i,a-1}{\genXE}_{a-1,j}+v^{-1}{\genXE}_{a-1,j}{\genXE}_{i,a-1})\\
&=-{\genXE}_{i,a-1}\genK_{\ol{a}}(-{\genXE}_{a-1,a+1}{\genXE}_{a+1,j}+v^{-1}{\genXE}_{a+1,j}{\genXE}_{a-1,a+1})\\
&\qquad+v^{-1}\genK_{\ol{a}}(-{\genXE}_{a-1,a+1}{\genXE}_{a+1,j}+v^{-1}{\genXE}_{a+1,j}{\genXE}_{a-1,a+1}){\genXE}_{i,a-1}\\
&={\genXE}_{i,a-1}\genK_{\ol{a}}{\genXE}_{a-1,a+1}{\genXE}_{a+1,j}-v^{-1}{\genXE}_{i,a-1}{\genXE}_{a+1,j}\genK_{\ol{a}}{\genXE}_{a-1,a+1}\\
&\qquad-v^{-1}\genK_{\ol{a}}{\genXE}_{a-1,a+1}{\genXE}_{a+1,j}{\genXE}_{i,a-1}+v^{-2}{\genXE}_{a+1,j}\genK_{\ol{a}}{\genXE}_{a-1,a+1}{\genXE}_{i,a-1}\\
&={\genXE}_{i,j}\genK_{\ol{a}}.
\end{align*}
Similarly, the formulas of $ \genK_{\ol{a}}{\ol{\genXE}}_{i,j}$ can be verified  and we skip the details. 

 \end{proof}
 
Applying the anti-automorphism $\Omega$ to $ \genK_{\ol{a}}{\ol{\genXE}}_{i,j}$ and $\genK_{\ol{a}}{\genXE}_{i,j}$, one can obtain the commutation formulas for $\genK_{\ol{a}}{{\genXF}}_{j,i}$ and $\genK_{\ol{a}}{\ol{\genXF}}_{j,i}$. We now discuss the commutation formulas for even-even positive quantum root vectors.
 
 \begin{lem}\label{pe-pe1}
 The following holds for $1\leq i,j,k,l\leq n$ satisfying $i<j,k<l,i\leq k$:
 \begin{align*}
	{\genXE}_{i,j}{\genXE}_{k,l}=\left\{
\begin{array}{ll}
{\genXE}_{k,l}{\genXE}_{i,j} &(i<j<k<l \text{ or }i<k<l<j),\\
v^{-1}{\genXE}_{k,l}{\genXE}_{i,j}-{\genXE}_{i,l} &(i<j=k<l),\\
v{\genXE}_{k,l}{\genXE}_{i,j} &(i=k<j<l \text{ or } i<k<j=l ),\\
{\genXE}_{k,l}{\genXE}_{i,j}+(v-v^{-1}){\genXE}_{i,l}{\genXE}_{k,j} &(i<k<j<l).
\end{array}
\right.
\end{align*}
 \end{lem}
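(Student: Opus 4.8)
The plan is to reduce everything to a two-layer induction anchored on the recursion of Proposition \ref{rv}. First observe that the case $i<j=k<l$ is immediate: it is exactly Corollary \ref{rv_k}, since rearranging $\genXE_{i,l}=-\genXE_{i,j}\genXE_{j,l}+v^{-1}\genXE_{j,l}\genXE_{i,j}$ yields $\genXE_{i,j}\genXE_{j,l}=v^{-1}\genXE_{j,l}\genXE_{i,j}-\genXE_{i,l}$. Once rewritten, this supplies the elementary straightening move used repeatedly below. For the remaining identities I will peel a single simple generator off the first factor via $\genXE_{i,j}=-\genXE_{i,j-1}\genE_{j-1}+v^{-1}\genE_{j-1}\genXE_{i,j-1}$, where $\genE_{j-1}=\genXE_{j-1,j}$, and induct on the length $j-i$.

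The first step is to isolate the special case in which one factor is a simple generator $\genE_m=\genXE_{m,m+1}$. I would prove these ``generator versus root vector'' relations by a separate induction on the length $l-k$ of $\genXE_{k,l}$, splitting $\genXE_{k,l}=-\genE_k\genXE_{k+1,l}+v^{-1}\genXE_{k+1,l}\genE_k$ and reducing to shorter instances plus the base case $l=k+1$. The point that makes this sublemma tractable is that a length-one interval $[m,m+1]$ can only be disjoint from, nested in, or share an endpoint with $[k,l]$, but can never properly cross it; so only the commuting configurations, the $v$-rescaling shared-endpoint configurations, and the case-(c) move occur. Its base cases are the far-apart commutation in (QQ5), the Serre relations (QQ6), and the definition of the root vectors. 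This is precisely where (QQ6) enters: the shared-start relation $\genE_k\genXE_{k,l}=v\genXE_{k,l}\genE_k$ is, for $l=k+2$, exactly the Serre relation for $\genE_k,\genE_{k+1}$.

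With the generator-versus-root-vector relations in hand, I would prove the four displayed identities by induction on $j-i$, peeling $\genE_{j-1}$ off $\genXE_{i,j}$ and commuting both $\genXE_{i,j-1}$ and $\genE_{j-1}$ past $\genXE_{k,l}$ using, respectively, the inductive hypothesis (at strictly smaller first length) and the sublemma. The disjoint case (a) falls out at once, since both pieces then commute with $\genXE_{k,l}$; the nested case (b) and the shared-endpoint cases reduce the same way, the boundary configurations being routine straightening. The genuine crux is the crossing case $i<k<j<l$: after separating the boundary subcase $k=j-1$ (which uses only the sublemma and the case-(c) move), peeling reduces it to the same crossing case with $j$ replaced by $j-1$. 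Moving $\genE_{j-1}$ into position, the two terms headed by $\genXE_{k,l}$ reassemble into $\genXE_{k,l}\genXE_{i,j}$, while the two residual terms combine—through the case-(c) identity $\genE_{j-1}\genXE_{k,j-1}=v(\genXE_{k,j-1}\genE_{j-1}+\genXE_{k,j})$ and the nested commutation $\genXE_{i,l}\genXE_{k,j}=\genXE_{k,j}\genXE_{i,l}$—into exactly the correction $(v-v^{-1})\genXE_{i,l}\genXE_{k,j}$.

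The main obstacle is not any individual computation but the non-circularity of the induction: the crossing case at length $j-i$ legitimately invokes the crossing case at length $j-1-i$ together with several shared-endpoint and nested relations, so the arguments must be ordered so that every appeal to an already-proved identity is at a strictly smaller induction parameter. Stratifying as ``generator-versus-root-vector first (induction on $l-k$), then general (induction on $j-i$)'' achieves this. I would also flag a structural point that keeps the argument internal to $\Uvqn$ rather than a transcription of the quantum $\mathfrak{gl}_n$ formulas: although the even generators do not close into quantum $\mathfrak{gl}_n$, because the bracket $\genE_i\genE_{i+1}-v\genE_{i+1}\genE_i$ equals an odd expression by (QQ5), this even--odd coupling never intervenes here, since every reordering of the even root vectors uses only the $v^{-1}$-commutator convention, the far-apart commutation, and the Serre relation. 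Finally, the $\genXF$-analogues of these formulas follow by applying the anti-involution $\Omega$ of \eqref{omega}.
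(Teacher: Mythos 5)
Your proposal is correct and follows essentially the same route as the paper: first establish the ``simple generator versus root vector'' commutation formulas (the paper's \eqref{Epe}, proved by induction on the length of the root vector), then derive the general four cases by induction using the recursion of Corollary \ref{rv_k}, with $\Omega$ giving the negative analogues; your observation that the case $i<j=k<l$ is just Corollary \ref{rv_k} rearranged is a small simplification of the paper's computation. The one place where your plan is thinner than it should be is the base case of the \emph{interior} commutation in the sublemma, namely $\genE_a\genXE_{a-1,a+2}=\genXE_{a-1,a+2}\genE_a$: this does not follow directly from far-apart commutation, the Serre relations, or the defining recursion taken singly (the subcase $a=k+1$ of your left-peeling induction involves adjacent generators and does not reduce), and the paper disposes of it by applying the braid-group automorphisms $T_{a-1}T_a$ to $\genE_{a+1}\genE_{a-1}=\genE_{a-1}\genE_{a+1}$; it can alternatively be checked as a degree-four consequence of the two Serre relations together with $\genE_{a-1}\genE_{a+1}=\genE_{a+1}\genE_{a-1}$, which also vindicates your structural remark that the even--odd coupling of (QQ5) never enters the even--even commutation formulas.
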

  \begin{proof}
     Suppose $1\leq i,j,k,l\leq n$ and $i<j,k<l,i\leq k$. By the relation $\genE_i \genE_j =\genE_j\genE_i$ ($|i-j|>1$) in (QQ5), if $i<j<k<l$, it is easy to check that ${\genXE}_{i,j}{\genXE}_{k,l}={\genXE}_{k,l}{\genXE}_{i,j}$. Applying \cite[Remark 6.5]{CLZ} and Remark \ref{EX}, we observe that
     $$
     \genE_a \genE_{a-1,a+2}=T_{a-1}T_a (\genE_{a-1})T_{a-1}T_a (\genE_{a+1})=T_{a-1}T_a (\genE_{a+1}\genE_{a-1})= \genE_{a-1,a+2}\genE_a.$$
     Also, using  the formulas in Corollary \ref{rv_k} and the relations in (QQ5-QQ6), a straightforward calculation shows that
     \begin{equation}\label{Epe}
     \begin{aligned}
   {\genE}_{a}{\genXE}_{i,j}=\left\{
\begin{array}{ll}
{\genXE}_{i,j}{\genE}_{a} &\hspace{-3em}(a<i-1 \text{ or } i<a<j-1 \text{ or } a>j),\\
v^{-1}{\genXE}_{i,j}{\genE}_{i-1}-{\genXE}_{i-1,j} &(a=i-1),\\
v{\genXE}_{i,j}{\genE}_{i} &(a=i ),\\
v^{-1}{\genXE}_{i,j}{\genE}_{j-1} &(a=j-1),\\
v{\genXE}_{i,j}{\genE}_{j}+v {\genXE}_{i,j+1} &(a=j).
\end{array}
\right.      
 \end{aligned}
\end{equation}
 Then the lemma is proved case-by-case using the first equation in Corollary \ref{rv_k} and the above formulas. Obviously, ${\genXE}_{i,j}{\genXE}_{k,l}={\genXE}_{k,l}{\genXE}_{i,j}$ for $i<k<l<j$.
 
 If $i<j=k<l$, we have
\begin{align*}
  {\genXE}_{i,j}{\genXE}_{j,l}
  &={\genXE}_{i,j}(-{\genE}_{j}{\genXE}_{j+1,l}+v^{-1}{\genXE}_{j+1,l}{\genE}_{j})\\
  &=-{\genXE}_{i,j}{\genE}_{j}{\genXE}_{j+1,l}+v^{-1}{\genXE}_{i,j}{\genXE}_{j+1,l}{\genE}_{j}\\
  &=-(v^{-1}{\genE}_{j}{\genXE}_{i,j}-{\genXE}_{i,j+1}){\genXE}_{j+1,l}+v^{-1}{\genXE}_{j+1,l}(v^{-1}{\genE}_{j}{\genXE}_{i,j}-{\genXE}_{i,j+1})\\
  &=-v^{-1}{\genE}_{j}{\genXE}_{j+1,l}{\genXE}_{i,j}+{\genXE}_{i,j+1}{\genXE}_{j+1,l}+v^{-2}{\genXE}_{j+1,l}{\genE}_{j}{\genXE}_{i,j}-v^{-1}{\genXE}_{j+1,l}{\genXE}_{i,j+1}\\
  &=v^{-1}{\genXE}_{j,l}{\genXE}_{i,j}-{\genXE}_{i,l}.
 \end{align*}
Similarly, we have ${\genXE}_{i,j}{\genXE}_{k,j}=v{\genXE}_{k,j}{\genXE}_{i,j}$. 

Observe that  the case $i=k<j<l$ can be obtained by the case $i<k<l<j$. And also the case $i<k<j<l$ is deduced from the commutation formula of ${\genXE}_{i,j}{\genXE}_{j,l}$ and the case $i=k<j<l$. We now verify the case $i<k<j<l$ in detail, and the proof for the other cases is similar.
\begin{align*}
{\genXE}_{i,j}{\genXE}_{k,l}
&=(-{\genXE}_{i,k}{\genXE}_{k,j}+v^{-1}{\genXE}_{k,j}{\genXE}_{i,k}){\genXE}_{k,l}\\
&=-{\genXE}_{i,k}{\genXE}_{k,j}{\genXE}_{k,l}+v^{-1}{\genXE}_{k,j}{\genXE}_{i,k}{\genXE}_{k,l}\\
&=-v{\genXE}_{i,k}{\genXE}_{k,l}{\genXE}_{k,j}+v^{-1}{\genXE}_{k,j}(v^{-1}{\genXE}_{k,l}{\genXE}_{i,k}-{\genXE}_{i,l})\\
&=-v(v^{-1}{\genXE}_{k,l}{\genXE}_{i,k}-{\genXE}_{i,l}){\genXE}_{k,j}+v^{-1}{\genXE}_{k,l}{\genXE}_{k,j}{\genXE}_{i,k}-v^{-1}{\genXE}_{k,j}{\genXE}_{i,l}\\
&=-{\genXE}_{k,l}{\genXE}_{i,k}{\genXE}_{k,j}+v{\genXE}_{i,l}{\genXE}_{k,j}+v^{-1}{\genXE}_{k,l}{\genXE}_{k,j}{\genXE}_{i,k}-v^{-1}{\genXE}_{k,j}{\genXE}_{i,l}\\
&={\genXE}_{k,l}{\genXE}_{i,j}+v{\genXE}_{i,l}{\genXE}_{k,j}-v^{-1}{\genXE}_{k,j}{\genXE}_{i,l}\\
&={\genXE}_{k,l}{\genXE}_{i,j}+(v-v^{-1}){\genXE}_{i,l}{\genXE}_{k,j}.
 \end{align*}
 \end{proof}

 Observe that if we interchange $(i,j)$ and $(k,l)$, we can obtain another set of commutation formulas for  ${\genXE}_{i,j}{\genXE}_{k,l}$. More precisely, the following holds for $1\leq i,j,k,l\leq n$ satisfying $i<j,k<l,k\leq i$:
 \begin{equation}\label{pe-pe2}
\begin{aligned}
	{\genXE}_{i,j}{\genXE}_{k,l}=\left\{
\begin{array}{ll}
{\genXE}_{k,l}{\genXE}_{i,j} &(k<l<i<j \text{ or } k<i<j<l),\\
v{\genXE}_{k,l}{\genXE}_{i,j}+v{\genXE}_{k,j} &(k<i=l<j),\\
v^{-1}{\genXE}_{k,l}{\genXE}_{i,j} &(k=i<l<j \text{ or } k<i<j=l ),\\
{\genXE}_{k,l}{\genXE}_{i,j}-(v-v^{-1}){\genXE}_{k,j}{\genXE}_{i,l} &(k<i<l<j).
\end{array}
\right.
\end{aligned}
\end{equation}
  Then we give complete commutation formulas for even positive root vectors ${\genXE}_{i,j}$ and ${\genXE}_{k,l}$ with $i<j, k<l$. And also, applying $\Omega$ to Lemma \ref{pe-pe1} and \eqref{pe-pe2}, we can obtain commutation formulas for even negative root vectors  ${\genXE}_{j,i}$ and ${\genXE}_{l,k}$ with $i<j, k<l$. 

\begin{lem}\label{ne-pe1}
     The following holds for $1\leq i,j,k,l\leq n$ satisfying $i<j,k<l,i\leq k$:
\begin{align*}
	{\genXE}_{j,i}{\genXE}_{k,l}=\left\{
\begin{array}{ll}
{\genXE}_{k,l}{\genXE}_{j,i} &\hspace{-3em}(i<j\leq k<l \text{ or }i<k<l<j),\\
{\genXE}_{k,l}{\genXE}_{j,i}-{\genXE}_{j,l}\genK_j \genK_i^{-1} &(i=k<j<l),\\
{\genXE}_{k,l}{\genXE}_{j,i}+{\genXE}_{k,i}\genK_j \genK_k^{-1}
&(i<k<j=l ),\\
{\genXE}_{k,l}{\genXE}_{j,i}-(v-v^{-1}){\genXE}_{k,i}{\genXE}_{j,l}\genK_j \genK_k^{-1} &(i<k<j<l),\\
{\genXE}_{k,l}{\genXE}_{j,i}+\frac{\genK_j \genK_i^{-1}-\genK_i \genK_j^{-1}  }{v-v^{-1}} &(i=k\enspace \& \enspace j=l).
\end{array}
\right.
\end{align*}
 \end{lem}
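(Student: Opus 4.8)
The plan is to prove the five cases by an induction that mirrors the proof of Lemma~\ref{pe-pe1}, but now anchored at the mixed relation (QQ4) rather than at (QQ5)--(QQ6). The guiding principle is the classical $\mathfrak{gl}_n$ shadow $[E_{j,i},E_{k,l}]=\delta_{i,k}E_{j,l}-\delta_{j,l}E_{k,i}$, which predicts exactly which corrections to expect: $\genXE_{j,i}$ and $\genXE_{k,l}$ ought to commute unless the intervals $[i,j]$ and $[k,l]$ share a left endpoint $i=k$ (producing $\genXE_{j,l}$) or a right endpoint $j=l$ (producing $\genXE_{k,i}$), with a pure Cartan term only when both coincide. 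In the quantum setting each surviving term should acquire an even weight factor $\genK_{j}\genK_{\bullet}^{-1}$, and the interleaved configuration $i<k<j<l$ should carry the familiar $(v-v^{-1})$-correction already seen in Lemma~\ref{pe-pe1}.

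The base case is that both root vectors are simple, $\genXE_{j,i}=\genF_{i}$ and $\genXE_{k,l}=\genE_{k}$, where (QQ4) gives directly
\begin{equation*}
\genF_{i}\genE_{k}=\genE_{k}\genF_{i}-\delta_{i,k}\frac{\genK_{i}\genK_{i+1}^{-1}-\genK_{i}^{-1}\genK_{i+1}}{v-v^{-1}}.
\end{equation*}
This is the commuting case for $i\ne k$ and the degenerate case ($i=k$, $j=l=i+1$) of the statement, the sign agreeing with the displayed Cartan expression because the $\genK$'s commute. The only extra tool needed for relocating Cartan factors is the weight rule from (QQ2): since $\genXE_{k,l}$ has weight $\epsilon_{k}-\epsilon_{l}$, one has $\genK_{a}\genXE_{k,l}=v^{\delta_{a,k}-\delta_{a,l}}\genXE_{k,l}\genK_{a}$, so every even $\genK$ can be carried to the far right without altering the stated coefficients.

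I would then organise the induction in two layers. In the inner layer I fix the positive part to be a single generator $\genE_{a}$ and induct on $j-i$: expanding $\genXE_{j,i}=-\genXE_{j,i+1}\genF_{i}+v\genF_{i}\genXE_{j,i+1}$ by Corollary~\ref{rv_k}, commuting $\genE_{a}$ past $\genF_{i}$ by (QQ4) and past $\genXE_{j,i+1}$ by the inner hypothesis, and transporting every $\genK$ by (QQ2), produces the commutation of a negative root vector with a single generator (the $\genF$-side analogue of \eqref{Epe}). In the outer layer I induct on $l-k$: writing $\genXE_{k,l}=-\genXE_{k,l-1}\genE_{l-1}+v^{-1}\genE_{l-1}\genXE_{k,l-1}$ and commuting $\genXE_{j,i}$ past $\genXE_{k,l-1}$ by the outer hypothesis and past $\genE_{l-1}$ by the inner-layer relations, I reassemble the five cases. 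Any even--even product that appears out of order along the way is straightened by Lemma~\ref{pe-pe1} and \eqref{pe-pe2} (and, after applying $\Omega$, their negative--negative counterparts); this yields the commuting cases $i<j\le k<l$ and $i<k<l<j$ with no correction, and the endpoint cases $i=k<j<l$ and $i<k<j=l$ with a single $\genXE_{j,l}$ or $\genXE_{k,i}$ weighted by $\genK_{j}\genK_{i}^{-1}$ or $\genK_{j}\genK_{k}^{-1}$.

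\textbf{The main obstacle} is the interleaved case $i<k<j<l$, which produces the genuine $(v-v^{-1})\genXE_{k,i}\genXE_{j,l}\genK_{j}\genK_{k}^{-1}$ term: here a left crossing and a right crossing are active simultaneously, so after the expansion I must combine the $i<k<j<l$ line of Lemma~\ref{pe-pe1} with the base relation above, and the $(v-v^{-1})$ factor emerges, exactly as in Lemma~\ref{pe-pe1}, as the difference of the two admissible reorderings. The delicate bookkeeping is to verify that the $\genK_{j}\genK_{k}^{-1}$ versus $\genK_{j}\genK_{i}^{-1}$ factors land in the correct positions and that the $v^{\pm1}$ weight shifts supplied by (QQ2) cancel to leave precisely the stated coefficients; once this configuration is settled, the remaining cases follow by the same mechanism with strictly fewer crossings. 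Finally, applying the anti-involution $\Omega$ of \eqref{omega} to the completed identities yields the companion commutation formulas for the products $\genXE_{i,j}\genXE_{l,k}$.
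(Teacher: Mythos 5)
Your proposal is correct and follows essentially the same route as the paper: both reduce to auxiliary commutation formulas between a root vector and a single generator (the paper's computation of $\genXE_{j,i}\genE_{j-1}$ and the formulas \eqref{FEij}), anchor everything at (QQ4), transport the $\genK$'s by (QQ2), straighten with Lemma \ref{pe-pe1}, and then derive the interleaved case $i<k<j<l$ and the Cartan case $i=k~\&~j=l$ from the endpoint cases. The only difference is organizational — the paper telescopes by peeling $\genF_i$ off the bottom of $\genXE_{j,i}$ for the case $i=k<j<l$, while you peel $\genE_{l-1}$ off the top of $\genXE_{k,l}$ — which does not change the substance of the argument.
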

\begin{proof}
   Assume that $1\leq i,j,k,l\leq n$ and $i<j,k<l,i\leq k$. By the first equation in (QQ4),  it is easy to check that $\genF_i{\genXE}_{k,l}={\genXE}_{k,l}\genF_i$ for $i<k$ or $i>l-1$ and ${\genXE}_{j,i}{\genXE}_{k,l}={\genXE}_{k,l}{\genXE}_{j,i}$ for $i<j\leq k<l$. Using the first equation in (QQ4) and the equations in Corollary \ref{rv_k}, a direct calculation shows that 
   \begin{align*}
    {\genXE}_{j,i} \genE_{j-1}  
&=(-\genF_{j-1}{\genXE}_{j-1,i}+v{\genXE}_{j-1,i}\genF_{j-1})\genE_{j-1}\\
&=-\genF_{j-1}\genE_{j-1}{\genXE}_{j-1,i}+v{\genXE}_{j-1,i}\genF_{j-1}\genE_{j-1}\\
&=-(\genE_{j-1}\genF_{j-1}-\frac{\genK_{j-1}\genK_j^{-1}-\genK_{j-1}^{-1}\genK_j}{v-v^{-1}}){\genXE}_{j-1,i}\\
&\quad+v{\genXE}_{j-1,i}(\genE_{j-1}\genF_{j-1}-\frac{\genK_{j-1}\genK_j^{-1}-\genK_{j-1}^{-1}\genK_j}{v-v^{-1}})\\
&=\genE_{j-1}{\genXE}_{j,i}+{\genXE}_{j-1,i}\genK_{j-1}^{-1}\genK_j.
   \end{align*}
   Similarly, one can check case-by-case that
   \begin{equation}\label{FEij}
   \begin{aligned}
  \genF_{a}{\genXE}_{i,j}=\left\{
  \begin{array}{ll}
{\genXE}_{i,j}\genF_{a} &\hspace{-3em}(a<i \text{ or } i<a<j-1 \text{ or } a>j-1),\\
{\genXE}_{i,j}\genF_{a}-{\genXE}_{i+1,j}\genK_i^{-1}\genK_{i+1} &(a=i),\\
{\genXE}_{i,j}\genF_{a}+v^{-1}{\genXE}_{i,j-1}\genK_{j-1}\genK_j^{-1} &(a=j-1).
  \end{array}
\right.
   \end{aligned}
      \end{equation}
   Then the lemma is proved by Corollary \ref{rv_k} and the above formulas. It is easy to check that ${\genXE}_{j,i}{\genXE}_{k,l}={\genXE}_{k,l}{\genXE}_{j,i}$ for $i<k<l<j$. For the case $i=k<j<l$, we have 
   \begin{align*}
    {\genXE}_{j,i}{\genXE}_{i,l}
&=(-{\genXE}_{j,i+1}{\genXE}_{i+1,i}+v{\genXE}_{i+1,i}{\genXE}_{j,i+1}){\genXE}_{i,l}\\
&=-{\genXE}_{j,i+1}({\genXE}_{i,l}\genF_i-{\genXE}_{i+1,l}\genK_i^{-1}\genK_{i+1})+v({\genXE}_{i,l}\genF_i-{\genXE}_{i+1,l}\genK_i^{-1}\genK_{i+1}){\genXE}_{j,i+1}\\
&={\genXE}_{i,l}{\genXE}_{j,i}+({\genXE}_{j,i+1}{\genXE}_{i+1,l}-{\genXE}_{i+1,l}{\genXE}_{j,i+1})\genK_{i+1}\genK_i^{-1},
   \end{align*}
then by recursion, we obtain
\begin{align*}
 {\genXE}_{j,i} {\genXE}_{i,l}-{\genXE}_{i,l}{\genXE}_{j,i} 
 &= ({\genXE}_{j,i+1}{\genXE}_{i+1,l}-{\genXE}_{i+1,l}{\genXE}_{j,i+1})\genK_{i+1}\genK_i^{-1}\\
 &=({\genXE}_{j,i+2}{\genXE}_{i+2,l}-{\genXE}_{i+2,l}{\genXE}_{j,i+2})\genK_{i+2}\genK_i^{-1}\\
 &\quad\vdots\\
 &=({\genXE}_{j,j-1}{\genXE}_{j-1,l}-{\genXE}_{j-1,l}{\genXE}_{j,j-1})\genK_{j-1}\genK_i^{-1}\\
 &=-{\genXE}_{j,l}\genK_{j-1}^{-1}\genK_j\genK_{j-1}\genK_i^{-1}\\
 &=-{\genXE}_{j,l}\genK_j\genK_i^{-1}.
\end{align*}
Similarly, we have ${\genXE}_{j,i} {\genXE}_{k,j}= {\genXE}_{k,j}{\genXE}_{j,i}+ {\genXE}_{k,i}\genK_j\genK_{k}^{-1}$ by \eqref{FEij} with $i<k<j=l$. And also, a straightforward computation shows that the case $i<k<j<l$ can be deduced from the cases $i<j=k<l$ and $i=k<j<l$, and the case $i=k \enspace\&\enspace j=l$ can be obtained by the case $i<k<j<l$, and we omit the details.
\end{proof}

Observe that if we interchange $(i,j)$ and $(k,l)$, we can obtain another set of commutation formulas for  ${\genXE}_{j,i}{\genXE}_{k,l}$ by a similar calculation. More precisely, the following holds for $1\leq i,j,k,l\leq n$ satisfying $i<j,k<l,k\leq i$:
\begin{align*}
	{\genXE}_{j,i}{\genXE}_{k,l}=\left\{
\begin{array}{ll}
{\genXE}_{k,l}{\genXE}_{j,i} &\hspace{-3em}(k<l\leq i<j \text{ or }k<i<j<l),\\
{\genXE}_{k,l}{\genXE}_{j,i}-v{\genXE}_{j,l}\genK_i \genK_l^{-1} &(k=i<l<j),\\
{\genXE}_{k,l}{\genXE}_{j,i}+v^{-1}{\genXE}_{k,i}\genK_i\genK_j^{-1} 
&(k<i<j=l ),\\
{\genXE}_{k,l}{\genXE}_{j,i}-(v-v^{-1}){\genXE}_{k,i}{\genXE}_{j,l}\genK_i \genK_l^{-1} &(k<i<l<j).
\end{array}
\right.
\end{align*} 

Hence, this together with Lemma \ref{ne-pe1} gives complete commutation formulas for the even negative root vectors ${\genXE}_{j,i}$ and the even positive root vectors ${\genXE}_{k,l}$ with $i<j, k<l$.
 
 \begin{lem}\label{pe-po1}
 The following holds for $1\leq i,j,k,l\leq n$ satisfying $i<j,k<l$:
 
 If $i<j<k<l$ or $i<k<l<j$ or  $i=k\enspace \& \enspace j=l$ or $k<l<i<j$ or  $k<i<j<l$, then ${\genXE}_{i,j}{\ol{\genXE}}_{k,l}={\ol{\genXE}}_{k,l}{\genXE}_{i,j}$ and 
 \begin{align*}
{\genXE}_{i,j}{\ol{\genXE}}_{k,l}=\left\{
\begin{array}{ll}
v^{-1}{\ol{\genXE}}_{k,l}{\genXE}_{i,j}-{\ol{\genXE}}_{i,l} &(i<j=k<l),\\
v{\ol{\genXE}}_{k,l}{\genXE}_{i,j} &(i=k<j<l ),\\
{\ol{\genXE}}_{k,l}{\genXE}_{i,j}+(v-v^{-1}){\genXE}_{k,j}{\ol{\genXE}}_{i,l} &(i<k<j<l),\\
v^{-1}{\ol{\genXE}}_{k,l}{\genXE}_{i,j}+{\ol{\genXE}}_{i,j}{\genXE}_{k,j}-v^{-1}{\genXE}_{k,j}{\ol{\genXE}}_{i,j}&(i<k<j=l),\\
{v^{-1}{\ol{\genXE}}_{k,l}{\genXE}_{i,j}-({\ol{\genE}}_{i}{\genXE}_{i+1,j}-v^{-1}{\genXE}_{i+1,j}{{\genE}}_{\ol{i}}){\genXE}_{k,i}}\\\quad+v^{-1}{\genXE}_{k,i}({{\genE}}_{\ol{i}}{\genXE}_{i+1,j}-v^{-1}{\genXE}_{i+1,j}{{\genE}}_{\ol{i}}) &(k<l=i<j),\\
v^{-1}{\ol{\genXE}}_{k,l}{\genXE}_{i,j} &(k=i<l<j ),\\
{\ol{\genXE}}_{k,l}{\genXE}_{i,j}+v^{-1}{\ol{\genXE}}_{i,l}{\genXE}_{k,j}-v{\genXE}_{k,j}{\ol{\genXE}}_{i,l} &(k<i<l<j),\\
v{\ol{\genXE}}_{k,l}{\genXE}_{i,j}-v{\genXE}_{k,j}{\ol{\genXE}}_{i,j}+{\ol{\genXE}}_{i,j}{\genXE}_{k,j} &(k<i<j=l).
\end{array}
\right.
\end{align*}
 \end{lem}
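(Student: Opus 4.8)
The plan is to follow the two-step template used in the proof of Lemma~\ref{pe-pe1}. First I would record the ``atomic'' relations that move a single even generator $\genE_a$ across an odd positive root vector $\ol{\genXE}_{k,l}$, together with the companion relations for a single odd generator $\genE_{\ol{a}}$ crossing an even positive root vector $\genXE_{i,j}$; these play the role that \eqref{Epe} played in the even-even computation. Then I would expand one of the two root vectors via the recursions of Corollary~\ref{rv_k}, transport the factors past the other one at a time, and feed in the atomic relations together with the even-even formulas already established in Lemma~\ref{pe-pe1} and \eqref{pe-pe2}.

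To derive the atomic relations I would peel off a boundary generator using $\ol{\genXE}_{k,l} = -\genE_k\ol{\genXE}_{k+1,l} + v^{-1}\ol{\genXE}_{k+1,l}\genE_k$ (and dually at the odd end $\genE_{\ol{l-1}}$) and repeatedly apply the graded relations (QQ3), (QQ5) and (QQ6). Away from the boundaries the outcome mirrors \eqref{Epe} verbatim. The genuinely new feature, absent in the purely even setting, is that the mixed even-odd relation $\genE_{a-1}\genE_{\ol{a}} - v\genE_{\ol{a}}\genE_{a-1} = \genE_{\ol{a-1}}\genE_a - v\genE_a\genE_{\ol{a-1}}$ of (QQ5) converts an adjacent even-odd pair into terms carrying the shifted odd generator $\genE_{\ol{a}}$. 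As a result, when the even root vector meets the odd end of $\ol{\genXE}_{k,l}$ the commutation cannot be closed within the span of single root vectors, and I expect irreducible two-term blocks such as $\genE_{\ol{i}}\genXE_{i+1,j} - v^{-1}\genXE_{i+1,j}\genE_{\ol{i}}$ to persist in the final answer.

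For the reduction I would sort the configurations as in Lemma~\ref{pe-pe1}, but now handle the orderings $i\le k$ and $k\le i$ independently: the interchange symmetry that let \eqref{pe-pe2} be read off from Lemma~\ref{pe-pe1} is unavailable here, because swapping the two root vectors exchanges their even and odd types. The disjoint and strictly nested configurations ($i<j<k<l$, $i<k<l<j$, $k<l<i<j$, $k<i<j<l$) and the diagonal case where $i=k$ and $j=l$ yield commutativity directly from the atomic relations and (QQ5). The single-overlap cases $i<j=k<l$ and $i=k<j<l$ are settled first by peeling off one generator, after which $i<k<j<l$ follows from them together with the even-even formulas; the boundary cases $i<k<j=l$ and $k<i<j=l$ acquire the corrections built from $\ol{\genXE}_{i,j}\genXE_{k,j}$ and $\genXE_{k,j}\ol{\genXE}_{i,j}$ predicted in the first step, and the interchanged cases follow by the symmetric expansion.

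The hard part will be the configuration $k<l=i<j$, where the top index of the odd root vector collides with the bottom index of the even one. Here the generator $\genE_i$ opening $\genXE_{i,j}$ must cross the odd end $\genE_{\ol{i-1}}$ of $\ol{\genXE}_{k,i}$, and (QQ5) forces out the odd generator $\genE_{\ol{i}}$, producing the irreducible expression $-(\genE_{\ol{i}}\genXE_{i+1,j} - v^{-1}\genXE_{i+1,j}\genE_{\ol{i}})\genXE_{k,i} + v^{-1}\genXE_{k,i}(\genE_{\ol{i}}\genXE_{i+1,j} - v^{-1}\genXE_{i+1,j}\genE_{\ol{i}})$ recorded in the statement. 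The delicate point is to carry this two-term block unchanged through the recursion on the intermediate index and to confirm that it neither collapses to a single root vector nor spawns further terms; once this bookkeeping is controlled, the remaining steps are routine manipulations of the kind already displayed in the proofs of Lemmas~\ref{pe-pe1} and \ref{ne-pe1}.
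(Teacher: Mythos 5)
Your proposal matches the paper's proof in all essentials: it derives the same single-generator crossing relations (the paper's \eqref{Epo} and its companion for $\genE_{\ol{a}}\genXE_{i,j}$), reduces the general cases through the recursions of Corollary~\ref{rv_k} together with Lemma~\ref{pe-pe1}, treats the orderings $i\le k$ and $k\le i$ separately since the even/odd asymmetry blocks the interchange trick, and correctly isolates $k<l=i<j$ as the case where the mixed relation in (QQ5) forces the irreducible two-term block $\genE_{\ol{i}}\genXE_{i+1,j}-v^{-1}\genXE_{i+1,j}\genE_{\ol{i}}$ into the answer. The only cosmetic difference is that the paper obtains the interior base case of the atomic relations via the braid-group action (Remark~\ref{EX}) rather than purely from (QQ3), (QQ5), (QQ6), which does not change the substance of the argument.
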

\begin{proof}
Suppose $1\leq i,j,k,l\leq n$ and $i<j,k<l$.  We only prove the cases where $i\leq k$ and the remaining cases can be verified in a similar way. 
By \cite[Remark 6.5]{CLZ} and Remark \ref{EX}, we observe that
\begin{align*}
    &\genE_a {\ol{\genXE}}_{{a-1,a+2}}=T_{a-1}T_a (\genE_{a-1})T_{a-1}T_a (\genE_{\ol{a+1}})=T_{a-1}T_a (\genE_{\ol{a+1}}\genE_{a-1})= \genXE_{a-1,a+2}\genE_a,\\
    &\genE_{\ol{a}} {{\genXE}}_{{a-1,a+2}}=T_{a-1}T_a (\genE_{\ol{a-1}})T_{a-1}T_a (\genE_{{a+1}})=T_{a-1}T_a (\genE_{{a+1}}\genE_{\ol{a-1}})= \genXE_{a-1,a+2}\genE_{\ol{a}}.
\end{align*} 
Also, by the relations in (QQ5-QQ6) and the equations in Corollary \ref{rv_k}, a direct calculation shows that
\begin{equation}\label{Epo}
\begin{aligned}
   & \genE_a {\ol{\genXE}}_{i,j}=\left\{
\begin{array}{ll}
 {\ol{\genXE}}_{i,j}\genE_a &\hspace{-2em}(a<i-1 \text{ or } i<a<j-1 \text{ or } a>j),\\
 v^{-1} {\ol{\genXE}}_{i,j}\genE_{i-1}- {\ol{\genXE}}_{i-1,j} &(a=i-1),\\
 v {\ol{\genXE}}_{i,j}\genE_i &(a=i),\\
 v {\ol{\genXE}}_{i,j}\genE_{j-1}-v{{\genXE}}_{i,j}{{\genE}}_{\ol{j-1}}+{{\genE}}_{\ol{j-1}} {{\genXE}}_{i,j}&(a=j-1),\\
 v^{-1} {\ol{\genXE}}_{i,j}\genE_j +{{\genE}}_{\ol{j}}{{\genXE}}_{i,j}-v^{-1}{{\genXE}}_{i,j}{{\genE}}_{\ol{j}}&(a=j); 
\end{array}
\right.\\
\end{aligned}
\end{equation}
\begin{equation}
\begin{aligned}
{{\genE}}_{\ol{a}}{{\genXE}}_{i,j}=\left\{
\begin{array}{ll}
{{\genXE}}_{i,j}{{\genE}}_{\ol{a}} &\hspace{-2em}(a<i-1 \text{ or } i<a<j-1 \text{ or }  a>j),\\
{v{{\genXE}}_{i,j}{{\genE}}_{\ol{i-1}}-\genE_{i-1}({{\genE}}_{\ol{i}}{{\genXE}}_{i+1,j}-v^{-1}{{\genXE}}_{i+1,j}{{\genE}}_{\ol{i}})}\\\quad+v({{\genE}}_{\ol{i}}{{\genXE}}_{i+1,j}-v^{-1}{{\genXE}}_{i+1,j}{{\genE}}_{\ol{i}})\genE_{i-1}
&(a=i-1),\\
v{{\genXE}}_{i,j}{{\genE}}_{\ol{i}}
&(a=i),\\
v{{\genXE}}_{i,j}{{\genE}}_{\ol{j-1}}-v{\ol{\genXE}}_{i,j}\genE_{j-1}+\genE_{j-1}{\ol{\genXE}}_{i,j} &(a=j-1),\\
v{{\genXE}}_{i,j}{{\genE}}_{\ol{j}}
+v{\ol{\genXE}}_{i,j+1} &(a=j).
\end{array}
\right.
\end{aligned}
\end{equation}
Then the lemma is proved by Corollary \ref{rv_k} and the above formulas. It is easy to check ${\genXE}_{i,j}{\ol{\genXE}}_{k,l}={\ol{\genXE}}_{k,l}{\genXE}_{i,j}$ for $i<j<k<l$ or $i<k<l<j$. For the case $i<j=k<l$, we have 
\begin{align*}
    {{\genXE}}_{i,j}{\ol{\genXE}}_{j,l}
&=(-{{\genXE}}_{i,j-1}\genE_{j-1}+v^{-1}\genE_{j-1}{{\genXE}}_{i,j-1}){\ol{\genXE}}_{j,l}\\
&=-{{\genXE}}_{i,j-1}\genE_{j-1}{\ol{\genXE}}_{j,l}+v^{-1}\genE_{j-1}{{\genXE}}_{i,j-1}{\ol{\genXE}}_{j,l}\\
&=-{{\genXE}}_{i,j-1}(v^{-1}{\ol{\genXE}}_{j,l}\genE_{j-1}-{\ol{\genXE}}_{j-1,l})+v^{-1}(v^{-1}{\ol{\genXE}}_{j,l}\genE_{j-1}-{\ol{\genXE}}_{j-1,l}){{\genXE}}_{i,j-1}\\
&=-v^{-1}{\ol{\genXE}}_{j,l}{{\genXE}}_{i,j-1}\genE_{j-1}+{{\genXE}}_{i,j-1}{\ol{\genXE}}_{j-1,l}+v^{-2}{\ol{\genXE}}_{j,l}\genE_{j-1}{{\genXE}}_{i,j-1}-v^{-1}{\ol{\genXE}}_{j-1,l}{{\genXE}}_{i,j-1}\\
&=v^{-1}{\ol{\genXE}}_{j,l}{{\genXE}}_{i,j}-{\ol{\genXE}}_{i,l}.
\end{align*}
And a straightforward computation shows that the case $i<k<j=l$ can be deduced from the case $i=k<j<l$, and the case $i=k<j<l$ can be obtained from the case $i<k<l<j$. The case $i<k<j<l$ can be deduced from the cases $i<j=k<l$ and $i=k<j<l$. In addition, the case $i=k \enspace \& \enspace j=l$ can be deduced from the case $i=k<j<l$. Let us illustrate by checking in detail the case where $i=k \enspace \& \enspace j=l$, other cases can be verified similarly.
\begin{align*}
   {{\genXE}}_{i,j}{\ol{\genXE}}_{i,j} 
&=(-{{\genXE}}_{i,j-1}\genE_{j-1}+v^{-1}\genE_{j-1}{{\genXE}}_{i,j-1}){\ol{\genXE}}_{i,j}\\
&=-{{\genXE}}_{i,j-1}\genE_{j-1}{\ol{\genXE}}_{i,j}+v^{-1}\genE_{j-1}{{\genXE}}_{i,j-1}{\ol{\genXE}}_{i,j}\\
&=-{{\genXE}}_{i,j-1}(v{\ol{\genXE}}_{i,j}\genE_{j-1}-v{{\genXE}}_{i,j}{\genE}_{\ol{j-1}}+{\genE}_{\ol{j-1}}{{\genXE}}_{i,j})+v^{-1}\genE_{j-1}(v{\ol{\genXE}}_{i,j}{{\genXE}}_{i,j-1}) \quad (\text{by \eqref{Epo}})\\
&=-v{{\genXE}}_{i,j-1}{\ol{\genXE}}_{i,j}\genE_{j-1}+v{{\genXE}}_{i,j-1}{{\genXE}}_{i,j}{\genE}_{\ol{j-1}}-{{\genXE}}_{i,j-1}{\genE}_{\ol{j-1}}{{\genXE}}_{i,j}\\
&\qquad+(v{\ol{\genXE}}_{i,j}\genE_{j-1}-v{{\genXE}}_{i,j}{\genE}_{\ol{j-1}}+{\genE}_{\ol{j-1}}{{\genXE}}_{i,j}){{\genXE}}_{i,j-1}\\
&=-v^2{\ol{\genXE}}_{i,j}{{\genXE}}_{i,j-1}\genE_{j-1}+v^2{{\genXE}}_{i,j}{{\genXE}}_{i,j-1}{\genE}_{\ol{j-1}}-{{\genXE}}_{i,j-1}{\genE}_{\ol{j-1}}{{\genXE}}_{i,j}\\
&\qquad +v{\ol{\genXE}}_{i,j}\genE_{j-1}{{\genXE}}_{i,j-1}-v{{\genXE}}_{i,j}{\genE}_{\ol{j-1}}{{\genXE}}_{i,j-1}+v^{-1}{\genE}_{\ol{j-1}}{{\genXE}}_{i,j-1}{{\genXE}}_{i,j}\\
&=v^2{\ol{\genXE}}_{i,j}{{\genXE}}_{i,j}-v^2{{\genXE}}_{i,j}{\ol{\genXE}}_{i,j}+{\ol{\genXE}}_{i,j}{{\genXE}}_{i,j} \quad (\text{by Lemma \ref{pe-pe1}}).
\end{align*}
Then we have ${{\genXE}}_{i,j}{\ol{\genXE}}_{i,j}={\ol{\genXE}}_{i,j}{{\genXE}}_{i,j}$ as desired. 
\end{proof}

Hence, this gives complete commutation formulas for the even positive root vectors ${\genXE}_{i,j}$ and the odd positive root vectors ${\ol{\genXE}}_{k,l}$ with $i<j, k<l$. And also, applying $\Omega$ to Lemma \ref{pe-po1}, we can obtain the commutation formulas for even negative root vectors  ${\genXE}_{j,i}$ and odd negative root vectors ${\ol{\genXE}}_{l,k}$ with $i<j, k<l$.
 
 \begin{lem}\label{po-po1}
 The following holds for $1\leq i,j,k,l\leq n$ satisfying $i<j,k<l,i\leq k$:
 \begin{align*}
{\ol{\genXE}}_{i,j}{\ol{\genXE}}_{k,l}=\left\{
\begin{array}{ll}
-{\ol{\genXE}}_{k,l}{\ol{\genXE}}_{i,j} &(i<j<k<l\text{ or } i<k<l<j),\\
{-v{\ol{\genXE}}_{k,l}{\ol{\genXE}}_{i,j}+(v\genE_{\ol{j}}{\ol{\genXE}}_{j+1,l}+{\ol{\genXE}}_{j+1,l}\genE_{\ol{j}}){{\genXE}}_{i,j}}\\ \quad-v^{-1}{{\genXE}}_{i,j}(v\genE_{\ol{j}}{\ol{\genXE}}_{j+1,l}+{\ol{\genXE}}_{j+1,l}\genE_{\ol{j}})
 &(i<j=k<l),\\
-v{\ol{\genXE}}_{k,l}{\ol{\genXE}}_{i,j} &(i=k<j<l ),\\
-{\ol{\genXE}}_{k,l}{\ol{\genXE}}_{i,j}-(v+v^{-1})
{\ol{\genXE}}_{i,l}{\ol{\genXE}}_{k,j} &(i<k<j<l),\\
-v^{-1}{\ol{\genXE}}_{k,l}{\ol{\genXE}}_{i,j}-v^{-1}(v-v^{-1}){{\genXE}}_{i,j}{\genXE}_{k,j} &(i<k<j=l),\\
-\frac{v-v^{-1}}{v+v^{-1}}{\genXE}^2_{i,j} &(i=k\enspace\&\enspace j=l).
\end{array}
\right.
\end{align*}
 \end{lem}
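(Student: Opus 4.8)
The plan is to follow the template established in the proofs of Lemmas \ref{pe-pe1} and \ref{pe-po1}, adapting it to the fully odd setting. First I would record the atomic super-commutation formulas between the odd simple generators $\genE_{\ol{a}}$ and a composite odd root vector $\overline{\genXE}_{i,j}$, that is, the odd-odd analogue of \eqref{Epo}. These are obtained from the quadratic and Serre-type relations in (QQ5-QQ6), together with the braid-invariant identities $\genE_{\ol{a}}\,\overline{\genXE}_{a-1,a+2}=\pm\,\overline{\genXE}_{a-1,a+2}\,\genE_{\ol{a}}$ coming from \cite[Remark 6.5]{CLZ} and Remark \ref{EX}. Because both factors are now odd, these formulas carry anticommutation signs, and at the extreme positions $a=i$ and $a=j-1$ the intervention of the quadratic relation $\genE_{\ol{a}}^2=-\frac{v-v^{-1}}{v+v^{-1}}\genE_{a}^2$ of (QQ5) is what will eventually force the coefficient $\frac{v-v^{-1}}{v+v^{-1}}$ into the answer.

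With these atomic relations available, I would then expand one of $\overline{\genXE}_{i,j}$, $\overline{\genXE}_{k,l}$ around an intermediate index by Corollary \ref{rv_k} and reduce each configuration to a small set of base cases, exactly as in the even-even case. The separated cases $i<j<k<l$ and $i<k<l<j$ follow immediately from the super-commutativity $\genE_{\ol{a}}\genE_{\ol{b}}+\genE_{\ol{b}}\genE_{\ol{a}}=0$ for $|a-b|>1$ in (QQ5). The genuine base cases are $i<j=k<l$ and $i=k<j<l$: each is handled by a direct expansion via the atomic formulas, collapsing the even pieces through Lemma \ref{pe-pe1} and the even-odd relations of Lemma \ref{pe-po1}. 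The remaining cases $i<k<j<l$ and $i<k<j=l$ are then obtained from these two by the same recursive collapse used previously, and I expect the mixed cases to produce even-even products $\genXE_{i,j}\genXE_{k,j}$ precisely because the structure relations (QQ4)-(QQ5) convert odd pairs into even expressions.

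The hard part will be the diagonal case $i=k\ \&\ j=l$, i.e. the computation of $\overline{\genXE}_{i,j}^{2}$, which is the root-vector generalization of the generating relation $\genE_{\ol{i}}^{2}=-\frac{v-v^{-1}}{v+v^{-1}}\genE_{i}^{2}$. I would expand the left factor as $\overline{\genXE}_{i,j}=-\genXE_{i,j-1}\genE_{\ol{j-1}}+v^{-1}\genE_{\ol{j-1}}\genXE_{i,j-1}$, multiply on the right by a second copy of $\overline{\genXE}_{i,j}$, and push that copy through $\genXE_{i,j-1}$ and $\genE_{\ol{j-1}}$ using Lemma \ref{pe-po1} and the atomic odd-odd formulas. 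The surviving cross terms will contain the square $\genE_{\ol{j-1}}^{2}$, and substituting $\genE_{\ol{j-1}}^{2}=-\frac{v-v^{-1}}{v+v^{-1}}\genE_{j-1}^{2}$, then recollecting the even factors into $\genXE_{i,j}^{2}$ via Lemma \ref{pe-pe1}, is what yields the stated coefficient. The main obstacle is essentially bookkeeping: controlling the proliferation of anticommutation signs and confirming that all off-diagonal terms cancel, in the same spirit as the final verification in the proof of Lemma \ref{pe-po1}.
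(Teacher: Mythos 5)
Your proposal is correct and follows the same overall architecture as the paper: derive the atomic formulas for $\genE_{\ol{a}}{\ol{\genXE}}_{i,j}$ from (QQ5--QQ6) and the braid-invariance identities of \cite[Remark 6.5]{CLZ}, dispose of the separated cases by super-commutativity, verify one or two genuine base cases by direct expansion via Corollary \ref{rv_k} together with Lemmas \ref{pe-pe1} and \ref{pe-po1}, and obtain the remaining off-diagonal cases by the recursive collapse. The only substantive divergence is the diagonal case ${\ol{\genXE}}_{i,j}^{2}$. The paper expands the \emph{left} end, ${\ol{\genXE}}_{i,j}=-\genE_i{\ol{\genXE}}_{i+1,j}+v^{-1}{\ol{\genXE}}_{i+1,j}\genE_i$, and invokes the already-proven case $i<k<j=l$ (with $k=i+1$) to arrive at the self-referential linear identity ${\ol{\genXE}}_{i,j}^{2}=-v^{-2}{\ol{\genXE}}_{i,j}^{2}-(1-v^{-2})\genXE_{i,j}^{2}$, which it then solves for ${\ol{\genXE}}_{i,j}^{2}$; the quadratic relation $\genE_{\ol{a}}^{2}=-\frac{v-v^{-1}}{v+v^{-1}}\genE_a^{2}$ enters only indirectly, through the atomic formula at the boundary position. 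Your right-end expansion works too, but be aware that the computation does not simply ``recollect into $\genXE_{i,j}^{2}$'': if you push the terms through, the cross terms reassemble into a residual $-v^{-2}{\ol{\genXE}}_{i,j}^{2}$ rather than an isolated $\genE_{\ol{j-1}}^{2}$, so you land on the same linear equation and must still solve $(1+v^{-2}){\ol{\genXE}}_{i,j}^{2}=-(1-v^{-2})\genXE_{i,j}^{2}$ to extract the coefficient $\frac{v-v^{-1}}{v+v^{-1}}$. One minor correction: for products of two positive root vectors it is (QQ5), not (QQ4), that converts odd pairs into even expressions; (QQ4) only governs $\genE$--$\genF$ commutators.
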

\begin{proof}
    Assume that $1\leq i,j,k,l\leq n$ and  $i<j,k<l,i\leq k$. 
Applying \cite[Remark 6.5]{CLZ} and Remark \ref{EX}, we observe that
\begin{align*}
    \genE_{\ol{a}} {\ol{\genXE}}_{{a-1,a+2}}=T_{a-1}T_a (\genE_{\ol{a-1}})T_{a-1}T_a (\genE_{\ol{a+1}})=T_{a-1}T_a (-\genE_{\ol{a+1}}\genE_{a-1})= -\genXE_{a-1,a+2}\genE_{\ol{a}}.
\end{align*} 
    Also, by the relations in (QQ5-QQ6) and the equations in Corollary \ref{rv_k}, a direct calculation shows that 
    \begin{align*}
{{\genE}}_{\ol{a}}{\ol{\genXE}}_{i,j}=\left\{
\begin{array}{ll}
-{\ol{\genXE}}_{i,j}{{\genE}}_{\ol{a}} &(a<i-1 \text{ or } i<a<j-1
\text{ or } k>j),\\
{-v{\ol{\genXE}}_{i,j}{{\genE}}_{\ol{i-1}}-\genE_{i-1}({{\genE}}_{\ol{i}}{\ol{\genXE}}_{i+1,j}+v^{-1}{\ol{\genXE}}_{i+1,j}{{\genE}}_{\ol{i}})}\\\quad+v({{\genE}}_{\ol{i}}{\ol{\genXE}}_{i+1,j}+v^{-1}{\ol{\genXE}}_{i+1,j}{{\genE}}_{\ol{i}})\genE_{i-1} &(a=i-1),\\
-v{\ol{\genXE}}_{i,j}{{\genE}}_{\ol{i}} &(a=i),\\
-v{\ol{\genXE}}_{i,j}{{\genE}}_{\ol{j-1}}-(v-v^{-1}){{\genXE}}_{i,j}\genE_{j-1} &(a=j-1),\\
-v^{-1}{\ol{\genXE}}_{i,j}{{\genE}}_{\ol{j}}+ v^{-1}{{\genXE}}_{i,j}\genE_{j}-\genE_{j}{{\genXE}}_{i,j}&(a=j).
\end{array}
\right.
\end{align*}
Then the lemma is proved case-by-case using \eqref{Epo}, Lemma \ref{pe-po1}, Corollary \ref{rv_k} and the above formulas. It is easy to check ${\ol{\genXE}}_{i,j}{\ol{\genXE}}_{k,l}=-{\ol{\genXE}}_{k,l}{\ol{\genXE}}_{i,j}$ for $i<j<k<l$ and $i<k<l<j$. For the case $i<k<j=l$, a direct calculation shows that
\begin{align*}
{\ol{\genXE}}_{i,j}{\ol{\genXE}}_{k,j} 
&={\ol{\genXE}}_{i,j}(-{{\genXE}}_{k,j-1}\genE_{\ol{j-1}}+v^{-1}\genE_{\ol{j-1}}{{\genXE}}_{k,j-1})\\
&=-{\ol{\genXE}}_{i,j}{{\genXE}}_{k,j-1}\genE_{\ol{j-1}}+v^{-1}{\ol{\genXE}}_{i,j}\genE_{\ol{j-1}}{{\genXE}}_{k,j-1}\\
&=-{{\genXE}}_{k,j-1}{\ol{\genXE}}_{i,j}\genE_{\ol{j-1}}+v^{-1}{\ol{\genXE}}_{i,j}\genE_{\ol{j-1}}{{\genXE}}_{k,j-1} \qquad (\text{ by Lemma \ref{pe-po1}} )\\
&=-{{\genXE}}_{k,j-1}[-v^{-1}\genE_{\ol{j-1}}{\ol{\genXE}}_{i,j}-v^{-1}(v-v^{-1}){{\genXE}}_{i,j}\genE_{j-1}]\\
&\qquad+v^{-1}[-v^{-1}\genE_{\ol{j-1}}{\ol{\genXE}}_{i,j}-v^{-1}(v-v^{-1}){{\genXE}}_{i,j}\genE_{j-1}]{{\genXE}}_{k,j-1}\\
&=-v^{-1}{\ol{\genXE}}_{k,j}{\ol{\genXE}}_{i,j}+v^{-1}(v-v^{-1}){{\genXE}}_{k,j-1}{{\genXE}}_{i,j}\genE_{j-1}\\
&\qquad
-v^{-2}(v-v^{-1}){{\genXE}}_{i,j}\genE_{j-1}{{\genXE}}_{k,j-1}\\
&=-v^{-1}{\ol{\genXE}}_{k,j}{\ol{\genXE}}_{i,j}+v^{-1}(v-v^{-1}){{\genXE}}_{k,j-1}{{\genXE}}_{i,j}\genE_{j-1}
\\
&\qquad-v^{-2}(v-v^{-1}){{\genXE}}_{i,j}(v{{\genXE}}_{k,j-1}\genE_{j-1}+v{{\genXE}}_{k,j}) \qquad (\text{ by } \eqref{Epe})\\
&=-v^{-1}{\ol{\genXE}}_{k,j}{\ol{\genXE}}_{i,j}-v^{-1}(v-v^{-1}){{\genXE}}_{i,j}{{\genXE}}_{k,j}\qquad (\text{ by Lemma \ref{pe-pe1}} ).
\end{align*}

Similarly, the cases $i<j=k<l$, $i<k<l<j$, $i<k<j<l$ can be verified, and the case $i<k<j<l$ can be deduced from the case $i<k<l<j$, and we omit the details. For the case $i=k\enspace\&\enspace j=l$, we have
\begin{align*}
    {\ol{\genXE}}_{i,j}^2
&={\ol{\genXE}}_{i,j}(-\genE_i{\ol{\genXE}}_{i+1,j}+v^{-1}{\ol{\genXE}}_{i+1,j}\genE_i)\\
&=-{\ol{\genXE}}_{i,j}\genE_i{\ol{\genXE}}_{i+1,j} +v^{-1}{\ol{\genXE}}_{i,j}{\ol{\genXE}}_{i+1,j}\genE_i\\
&=-v^{-1}\genE_i[-v^{-1}{\ol{\genXE}}_{i+1,j} {\ol{\genXE}}_{i,j}-(1-v^{-2}){{\genXE}}_{i,j}{{\genXE}}_{i+1,j}]\\&\qquad
+v^{-1}[-v^{-1}{\ol{\genXE}}_{i+1,j} {\ol{\genXE}}_{i,j}-(1-v^{-2}){{\genXE}}_{i,j}{{\genXE}}_{i+1,j}]\genE_i \quad (\text{by \eqref{Epo}}  )\\
&=-v^{-2}{\ol{\genXE}}_{i,j}^2-(1-v^{-2}){{\genXE}}_{i,j}^2,
\end{align*}
then we obtain ${\ol{\genXE}}_{i,j}^2=-\frac{v-v^{-1}}{v+v^{-1}}{{\genXE}}_{i,j}^2$.
\end{proof}

Observe that if we interchange $(i,j)$ and $(k,l)$, we can obtain another set of commutation formulas for  ${\ol{\genXE}}_{i,j}{\ol{\genXE}}_{k,l}$ by a similar calculation. More precisely, the following holds for $1\leq i,j,k,l\leq n$ satisfying $i<j,k<l,k\leq i$:
\begin{align*}
{\ol{\genXE}}_{i,j}{\ol{\genXE}}_{k,l}=\left\{
\begin{array}{ll}
-{\ol{\genXE}}_{k,l}{\ol{\genXE}}_{i,j} &(k<l<i<j\text{ or } k<i<j<l),\\
-v^{-1}{\ol{\genXE}}_{k,i}{\ol{\genXE}}_{i,j}+(\genE_{\ol{i}}{\ol{\genXE}}_{i+1,j}+v^{-1}{\ol{\genXE}}_{i+1,j}\genE_{\ol{i}}){{\genXE}}_{k,i}\\ \quad-v^{-1}{{\genXE}}_{k,i}(\genE_{\ol{i}}{\ol{\genXE}}_{i+1,j}+v^{-1}{\ol{\genXE}}_{i+1,j}\genE_{\ol{i}}) &(k<l=i<j),\\
-v^{-1}{\ol{\genXE}}_{k,l}{\ol{\genXE}}_{i,j} &(k=i<l<j ),\\
-{\ol{\genXE}}_{k,l}{\ol{\genXE}}_{i,j}-(v+v^{-1}){\ol{\genXE}}_{k,j}{\ol{\genXE}}_{i,l} &(k<i<l<j),\\
-v{\ol{\genXE}}_{k,l}{\ol{\genXE}}_{i,j}-(v-v^{-1}){{\genXE}}_{k,j}{\genXE}_{i,j} &(k<i<j=l).
\end{array}
\right.
\end{align*}

 Hence, this together with Lemma \ref{po-po1} gives complete commutation formulas for the odd positive root vectors ${\ol{\genXE}}_{i,j}$ and ${\ol{\genXE}}_{k,l}$ with $i<j, k<l$. And also, applying $\Omega$ to Lemma \ref{po-po1} and the above set of formulas, we can obtain the commutation formulas for odd negative root vectors  ${\ol{\genXE}}_{j,i}$ and ${\ol{\genXE}}_{l,k}$ with $i<j, k<l$.
 
 \begin{lem}\label{ne-po1}
 The following holds for $1\leq i,j,k,l\leq n$ satisfying $i<j,k<l$:

 If $i<j\leq k<l$  or $i<k<l<j$ or $k<l\leq i<j$ or $k<i<j<l$, then ${\genXE}_{j,i}{\ol{\genXE}}_{k,l}={\ol{\genXE}}_{k,l}{\genXF}_{j,i} $ and 
 \begin{align*}
{\genXE}_{j,i}{\ol{\genXE}}_{k,l}=\left\{
\begin{array}{ll}
{\ol{\genXE}}_{k,l}{\genXF}_{j,i}-{\ol{\genXE}}_{j,l}\genK_j \genK_i^{-1} &(i=k<j<l),\\
{\ol{\genXE}}_{k,l}{\genXE}_{j,i}+{\ol{\genXE}}_{j,i}{\genXE}_{k,j}\genK_j^2-{\genXE}_{k,j}{\ol{\genXE}}_{j,i}\genK_j^2-(v-v^{-1}){\genXE}_{k,i}\genK_{\ol{j}}\genK_k^{-1}\genK_j^2
&(i<k<j=l ),\\
{\ol{\genXE}}_{k,l}{\genXE}_{j,i}-(v-v^{-1}){\genXE}_{k,i}{\ol{\genXE}}_{j,l}\genK_j \genK_k^{-1} &(i<k<j<l),\\
{{\ol{\genXE}}_{k,l}{\genXE}_{j,i}+({\ol{\genXE}}_{j,i}{\genXE}_{i,j}-{\genXE}_{i,j}{\ol{\genXE}}_{j,i})\genK_j^2+\genK_{\ol{j}}\genK_i^{-1}\genK_j^2+\genK_{\ol{j}}\genK_i} &(i=k\enspace \& \enspace j=l),\\
{\ol{\genXE}}_{k,l}{\genXE}_{j,i}+v\genK_{\ol{l}}{\genXE}_{j,l}\genK_i-{\genXE}_{j,l}\genK_{\ol{l}}\genK_i &(k=i<l<j),\\
 {\ol{\genXE}}_{k,l}{\genXE}_{j,i}-(v-v^{-1})\genK_{\ol{l}}{\genXE}_{j,l}{\genXE}_{k,i}\genK_i+(1-v^{-2}){\genXE}_{j,l}\genK_{\ol{l}}{\genXE}_{k,i}\genK_i
 &(k<i<l<j),\\
 {\ol{\genXE}}_{k,j}{\genXE}_{j,i}+ {\ol{\genXE}}_{j,i}{\genXE}_{k,j}\genK_{j}^2-{\genXE}_{k,j}{\ol{\genXE}}_{j,i}\genK_{j}^2-(1-v^{-2}){\genXE}_{k,i}\genK_i\genK_{\ol{j}} &(k<i<j=l).
\end{array}
\right.
\end{align*}
 \end{lem}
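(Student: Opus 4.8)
The plan is to imitate the proof of Lemma~\ref{ne-pe1}, since ${\genXE}_{j,i}{\ol{\genXE}}_{k,l}$ is the odd-positive analogue of the even-negative/even-positive crossing treated there. The even negative root vector ${\genXE}_{j,i}$ is a nested $v$-commutator in the even generators $\genF_a$ (Proposition~\ref{rv}), while ${\ol{\genXE}}_{k,l}$ is assembled from the $\genE$'s, so every nontrivial term on the right-hand side must come from the mixed $\genE$--$\genF$ relations of (QQ4); this is exactly why the Cartan factors $\genK_j\genK_i^{-1}$, $\genK_j^2$ and the odd factors $\genK_{\ol{j}},\genK_{\ol{l}}$ appear. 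First I would dispose of the four commuting cases $i<j\le k<l$, $i<k<l<j$, $k<l\le i<j$, $k<i<j<l$: in each, every $\genF_a$ occurring in ${\genXE}_{j,i}$ has support disjoint from every $\genE_b,\genE_{\ol{b}}$ occurring in ${\ol{\genXE}}_{k,l}$, so by (QQ4)--(QQ5) all of them commute and ${\genXE}_{j,i}{\ol{\genXE}}_{k,l}={\ol{\genXE}}_{k,l}{\genXE}_{j,i}$, just as in the opening line of Lemma~\ref{ne-pe1}.

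The engine for the remaining cases is a single base formula: the commutation of one even generator $\genF_a$ past the odd positive root vector, i.e.\ an explicit expression for $\genF_a{\ol{\genXE}}_{i,j}$ analogous to \eqref{FEij}. I would establish it from (QQ3)--(QQ5) together with Corollary~\ref{rv_k}, the only nontrivial positions being $a=i$, where the ordinary relation $\genE_i\genF_i-\genF_i\genE_i$ contributes the even $\genK$-terms, and $a=j-1$, where ${\ol{\genXE}}_{i,j}$ carries the odd letter $\genE_{\ol{j-1}}$ and the mixed relation $\genE_{\ol{i}}\genF_j-\genF_j\genE_{\ol{i}}=\delta_{ij}(\genK_{i+1}\genK_{\ol{i}}-\genK_{\ol{i+1}}\genK_i)$ contributes the odd Cartan letters $\genK_{\ol{a}}$. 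With this formula in hand, for each nontrivial case I would expand ${\genXE}_{j,i}$ by Corollary~\ref{rv_k} so as to expose the generator $\genF_i$ (taking the split index next to $i$) or $\genF_{j-1}$ (splitting next to $j$), push it through ${\ol{\genXE}}_{k,l}$ by the base formula, and then telescope the resulting chain of root vectors exactly as in the displayed recursion of Lemma~\ref{ne-pe1}. The odd Cartan letters $\genK_{\ol{a}}$ so produced are moved to the right past the surviving root vectors using Lemma~\ref{KE}, and the even $\genK$-monomials are collected by (QQ2).

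Concretely I would prove two representative cases in full --- $i=k<j<l$ and $k=i<l<j$ --- via the telescoping of the associated $\genK$-coefficients (mirroring the recursion $\genK_{i+1}\genK_i^{-1}\to\genK_{i+2}\genK_i^{-1}\to\cdots\to\genK_j\genK_i^{-1}$ in Lemma~\ref{ne-pe1}), and then obtain the overlapping cases $i<k<j<l$ and $k<i<l<j$ from these by the same bootstrap, re-collecting the mixed products that arise with the already-established even-positive/odd-positive crossing formulas of Lemma~\ref{pe-po1} (and, where even-even products appear, Lemma~\ref{pe-pe1} and \eqref{pe-pe2}). The endpoint-sharing cases $i<k<j=l$ and $k<i<j=l$ follow by the same expansion at $a=j-1$.

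The main obstacle is the diagonal case $i=k\;\&\;j=l$, where the two root vectors share both endpoints. Here, beyond the even commutator term $({\ol{\genXE}}_{j,i}{\genXE}_{i,j}-{\genXE}_{i,j}{\ol{\genXE}}_{j,i})\genK_j^2$, one genuinely produces the Cartan expression $\genK_{\ol{j}}\genK_i^{-1}\genK_j^2+\genK_{\ol{j}}\genK_i$, so the computation no longer closes up purely in root vectors. I expect the difficulty here to be bookkeeping rather than conceptual: keeping the numerous $\genK$ and $\genK_{\ol{}}$ factors correctly ordered using Lemma~\ref{KE} and the squaring relations of (QQ5), and reducing the case from $i<k<j<l$ without sign or $v$-power errors, is where the calculation must be carried out most carefully. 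Finally, the companion formulas for the $k\le i$ arrangement follow by interchanging $(i,j)$ and $(k,l)$, and applying $\Omega$ yields the odd-negative analogues.
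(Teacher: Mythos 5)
Your strategy is essentially the paper's: both proofs rest on first computing the base formula for $\genF_a{\ol{\genXE}}_{i,j}$ (the paper's \eqref{Fpo}), whose only nontrivial positions are $a=i$ and $a=j-1$, then splitting ${\genXE}_{j,i}$ via Corollary \ref{rv_k} so as to expose $\genF_i$ or $\genF_{j-1}$, telescoping the Cartan factors $\genK_{i+1}\genK_i^{-1}\to\cdots\to\genK_j\genK_i^{-1}$ exactly as in Lemma \ref{ne-pe1}, and then bootstrapping the overlapping and diagonal cases from the endpoint cases. That the paper writes out $i=k<j<l$ and $i<k<j=l$ in full while you choose $i=k<j<l$ and $k=i<l<j$, and that it proves only the $i\le k$ arrangement and declares the rest similar, are immaterial differences.

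One step, however, would fail as you state it. For the two nested ``commuting'' cases $i<k<l<j$ and $k<i<j<l$ the supports are \emph{not} disjoint: when $i<k<l<j$, the vector ${\genXE}_{j,i}$ is built from $\genF_i,\dots,\genF_{j-1}$ and therefore contains $\genF_k$ and $\genF_{l-1}$, which by (QQ4) do not commute with the letters $\genE_k$ and $\genE_{\ol{l-1}}$ of ${\ol{\genXE}}_{k,l}$. The identity ${\genXE}_{j,i}{\ol{\genXE}}_{k,l}={\ol{\genXE}}_{k,l}{\genXE}_{j,i}$ in those two cases is a cancellation of the contributions from $a=k$ and $a=l-1$, not a letter-by-letter commutation, and must be checked with the same base formula \eqref{Fpo} and the splits of Corollary \ref{rv_k} that you deploy elsewhere; your disjoint-support argument only covers $i<j\le k<l$ and $k<l\le i<j$. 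Two smaller points: the $j=l$ cases also need the companion formula for $\genE_{\ol{a}}{\genXE}_{j,i}$ (the second half of \eqref{Fpo}), which your outline does not isolate; and the $k\le i$ arrangement is not obtained by literally interchanging $(i,j)$ with $(k,l)$, since the two factors are of different species --- one re-runs the argument with the splits taken at $k$ and $l$, as the paper indicates.
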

 \begin{proof}
     Suppose $1\leq i,j,k,l\leq n$ and $i<j,k<l$. We only prove the cases where $i\leq k$ and the remaining cases can be verified in a similar way. By the relations in (QQ4) and Corollary \ref{rv_k}, a direct calculation shows that
\begin{align*}
    \genF_{j-1}{\ol{\genXE}}_{i,j}
&=\genF_{j-1}(-{{\genXE}}_{i,j-1}\genE_{\ol{j-1}}+v^{-1}\genE_{\ol{j-1}}{{\genXE}}_{i,j-1})\\
&=-{{\genXE}}_{i,j-1}\genF_{j-1}\genE_{\ol{j-1}}+v^{-1}\genF_{j-1}\genE_{\ol{j-1}}{{\genXE}}_{i,j-1}\\
&=-{{\genXE}}_{i,j-1}(\genE_{\ol{j-1}}\genF_{j-1}-\genK_j\genK_{\ol{j-1}}+\genK_{\ol{j}}\genK_{j-1})\\
&\qquad+v^{-1}(\genE_{\ol{j-1}}\genF_{j-1}-\genK_j\genK_{\ol{j-1}}+\genK_{\ol{j}}\genK_{j-1}){{\genXE}}_{i,j-1}\\
&=-{{\genXE}}_{i,j-1}\genE_{\ol{j-1}}\genF_{j-1}+{{\genXE}}_{i,j-1}\genK_j\genK_{\ol{j-1}}-{{\genXE}}_{i,j-1}\genK_{\ol{j}}\genK_{j-1}\\
&\qquad +v^{-1}\genE_{\ol{j-1}}{{\genXE}}_{i,j-1}\genF_{j-1}-v^{-1}\genK_j\genK_{\ol{j-1}}{{\genXE}}_{i,j-1}+v^{-1}\genK_{\ol{j}}\genK_{j-1}{{\genXE}}_{i,j-1}\\
&={\ol{\genXE}}_{i,j}\genF_{j-1}-(v-v^{-1})\genK_{\ol{j}}\genK_{j-1}{{\genXE}}_{i,j-1}\\
&\qquad+(v\genK_{\ol{j-1}}{{\genXE}}_{i,j-1}+v{\ol{\genXE}}_{i,j-1}\genK_{j-1}^{-1}-v^{-1}\genK_{\ol{j-1}}{{\genXE}}_{i,j-1})\genK_j \qquad (\text{by Lemma \ref{KE}})\\
&={\ol{\genXE}}_{i,j}\genF_{j-1}-(v-v^{-1})\genK_{\ol{j}}\genK_{j-1}{{\genXE}}_{i,j-1}\\
&\qquad +(v-v^{-1})\genK_{\ol{j-1}}{{\genXE}}_{i,j-1}\genK_j+v{\ol{\genXE}}_{i,j-1}\genK_j\genK_{j-1}^{-1}.
\end{align*}
Similarly, one can check case-by-case that
\begin{equation}\label{Fpo}
\begin{aligned}
    \genF_a{\ol{\genXE}}_{i,j}&=\left\{
 \begin{array}{ll}
 {\ol{\genXE}}_{i,j}\genF_a &\hspace{-4em}(a<i \text{ or } i<a<j-1 \text{ or } a>j-1),\\
 {\ol{\genXE}}_{i,j}\genF_i - {\ol{\genXE}}_{i+1,j}\genK_{i+1}\genK_i^{-1} &(a=i),\\
 {\ol{\genXE}}_{i,j}\genF_{j-1}-(v-v^{-1})\genK_{\ol{j}}\genK_{j-1}{{\genXE}}_{i,j-1}\\
\quad +(v-v^{-1})\genK_{\ol{j-1}}{{\genXE}}_{i,j-1}\genK_j+v{\ol{\genXE}}_{i,j-1}\genK_j\genK_{j-1}^{-1}&(a=j-1).
 \end{array}
\right.\\
\genE_{\ol{a}}\genXE_{j,i}&=\left\{
 \begin{array}{ll}
\genXE_{j,i}\genE_{\ol{a}} &\hspace{-4em}(a<i \text{ or } i<a<j-1 \text{ or } a>j-1),\\
\genXE_{j,i}\genE_{\ol{i}}-v\genK_{\ol{i+1}} \genXE_{j,i+1}\genK_i+\genXE_{j,i+1}\genK_{\ol{i+1}}\genK_i &(a=i),\\
\genXE_{j,i}\genE_{\ol{j-1}}-\genK_{\ol{j-1}}\genXE_{j-1,i}\genK_j+v\genXE_{j-1,i}\genK_{\ol{j-1}}\genK_j &(a=j-1).
  \end{array}
\right.
\end{aligned}
\end{equation}
Then the lemma is proved by Corollary \ref{rv_k}, Lemma \ref{ne-pe1} and the above formulas. It is easy to check ${\genXE}_{j,i}{\ol{\genXE}}_{k,l}={\ol{\genXE}}_{k,l}{\genXE}_{j,i}$ for $i<j\leq k<l$ and $i<k<l<j$. For the case $i=k<j<l$, we obtain
\begin{align*}
    {\genXE}_{j,i}{\ol{\genXE}}_{i,l}
&=(-{\genXE}_{j,i+1}{\genXE}_{i+1,i}+v{\genXE}_{i+1,i}{\genXE}_{j,i+1}){\ol{\genXE}}_{i,l}\\
&=-{\genXE}_{j,i+1}{\genF}_{i}{\ol{\genXE}}_{i,l}+v{\genF}_{i}{\ol{\genXE}}_{i,l}{\genXE}_{j,i+1}\\
&=-{\genXE}_{j,i+1}({\ol{\genXE}}_{i,l}{\genF}_{i}-{\ol{\genXE}}_{i+1,l}\genK_{i+1}\genK_i^{-1})+v({\ol{\genXE}}_{i,l}{\genF}_{i}-{\ol{\genXE}}_{i+1,l}\genK_{i+1}\genK_i^{-1}){\genXE}_{j,i+1}\\
&={\ol{\genXE}}_{i,l}{\genXE}_{j,i}+({\genXE}_{j,i+1}{\ol{\genXE}}_{i+1,l}-{\ol{\genXE}}_{i+1,l}{\genXE}_{j,i+1})\genK_{i+1}\genK_i^{-1},
\end{align*}
then by recursion, we have 
\begin{align*}
  {\genXE}_{j,i}{\ol{\genXE}}_{i,l}-{\ol{\genXE}}_{i,l}{\genXE}_{j,i} 
  &=({\genXE}_{j,i+1}{\ol{\genXE}}_{i+1,l}-{\ol{\genXE}}_{i+1,l}{\genXE}_{j,i+1})\genK_{i+1}\genK_i^{-1}\\
  &=({\genXE}_{j,j-1}{\ol{\genXE}}_{j-1,l}-{\ol{\genXE}}_{j-1,l}{\genXE}_{j,j-1})\genK_{j-1}\genK_i^{-1}\\
  &=-{\ol{\genXE}}_{j,l}\genK_{j}\genK_i^{-1}.
\end{align*}
Hence, we obtain ${\genXE}_{j,i}{\ol{\genXE}}_{i,l}={\ol{\genXE}}_{i,l}{\genXE}_{j,i}-{\ol{\genXE}}_{j,l}\genK_{j}\genK_i^{-1}$. 
For the case $i<k<j=l$, we have 
\begin{align*}
  {\genXE}_{j,i}{\ol{\genXE}}_{k,j}  
&={\genXE}_{j,i}(-{\genXE}_{k,j-1}\genE_{\ol{j-1}}+v^{-1}\genE_{\ol{j-1}}{\genXE}_{k,j-1})\\
&=-{\genXE}_{k,j-1}{\genXE}_{j,i}\genE_{\ol{j-1}}+v^{-1}{\genXE}_{j,i}\genE_{\ol{j-1}}{\genXE}_{k,j-1}\\
&=-{\genXE}_{k,j-1}(\genE_{\ol{j-1}}{\genXE}_{j,i}+\genK_j\genK_{\ol{j-1}}{\genXE}_{j-1,i}-v{\genXE}_{j-1,i}\genK_j\genK_{\ol{j-1}})\\
&\qquad+v^{-1}(\genE_{\ol{j-1}}{\genXE}_{j,i}+\genK_j\genK_{\ol{j-1}}{\genXE}_{j-1,i}-v{\genXE}_{j-1,i}\genK_j\genK_{\ol{j-1}}){\genXE}_{k,j-1}\\
&={\ol{\genXE}}_{k,j}{\genXE}_{j,i}-{\genXE}_{k,j-1}\genK_j\genK_{\ol{j-1}}{\genXE}_{j-1,i}+v{\genXE}_{k,j-1}{\genXE}_{j-1,i}\genK_j\genK_{\ol{j-1}}\\
&\qquad +v^{-1}\genK_j\genK_{\ol{j-1}}{\genXE}_{j-1,i}{\genXE}_{k,j-1}-{\genXE}_{j-1,i}\genK_j\genK_{\ol{j-1}}{\genXE}_{k,j-1}\\
&={\ol{\genXE}}_{k,j}{\genXE}_{j,i}-{\genXE}_{k,j-1}\genK_j\genK_{\ol{j-1}}{\genXE}_{j-1,i}+v{\genXE}_{j-1,i}{\genXE}_{k,j-1}\genK_j\genK_{\ol{j-1}}-v{\genXE}_{k,i}\genK_{j-1}\genK_k^{-1}\genK_j\genK_{\ol{j-1}}\\
&\qquad +v^{-1}\genK_j\genK_{\ol{j-1}}{\genXE}_{k,j-1}{\genXE}_{j-1,i}+v^{-1}\genK_j\genK_{\ol{j-1}}{\genXE}_{k,i}\genK_{j-1}\genK_k^{-1}-{\genXE}_{j-1,i}\genK_j\genK_{\ol{j-1}}{\genXE}_{k,j-1}\\
&={\ol{\genXE}}_{k,j}{\genXE}_{j,i}-\genF_{\ol{j-1}}{\genXE}_{k,j}{\genXE}_{j-1,i}\genK_j^2+v{\genXE}_{j-1,i}\genF_{\ol{j-1}}{\genXE}_{k,j}\genK_j^2\\
&\qquad -{\genXE}_{k,j}{\ol{\genXE}}_{j,i}\genK_j^2-(v-v^{-1}){\genXE}_{k,i}\genK_{\ol{j}}\genK_k^{-1}\genK_j^2\\
&={\ol{\genXE}}_{k,j}{\genXE}_{j,i}+{\ol{\genXE}}_{j,i}{\genXE}_{k,j}\genK_j^2-{\genXE}_{k,j}{\ol{\genXE}}_{j,i}\genK_j^2-(v-v^{-1}){\genXE}_{k,i}\genK_{\ol{j}}\genK_k^{-1}\genK_j^2.
\end{align*}
The second-to-last equality follows by applying the anti-involution $\Omega$ to \eqref{Fpo}, which yields $\genF_{\ol{j-1}}{\genXE}_{k,j}={\genXE}_{k,j}\genF_{\ol{j-1}}+\genK_j^{-1}{\genXE}_{k,j-1}\genK_{\ol{j-1}}-v^{-1}\genK_j^{-1}\genK_{\ol{j-1}}{\genXE}_{k,j-1}$.
Similarly, one can verify the case $i=k<j<l$ directly. And a straightforward computation shows that the cases $i<k<j<l$ and $i=k \enspace\& j=l$ can be deduced from the case $i=k<j<l$, and we omit the details.
 \end{proof}

 Applying $\Omega$ to Lemma \ref{ne-po1}, we can obtain the commutation formulas for odd negative root vectors ${\ol{\genXE}}_{l,k}$ and even positive root vectors ${{\genXE}}_{i,j}$ with $i<j,k<l$.
 
 \begin{lem}\label{no-po1}
 The following holds for $1\leq i,j,k,l\leq n$ satisfying $i<j,k<l,i\leq k$:
 \begin{align*}
{\ol{\genXE}}_{j,i}{\ol{\genXE}}_{k,l}&=\left\{
\begin{array}{ll}
-{\ol{\genXE}}_{k,l}{\ol{\genXE}}_{j,i} &\hspace{-2cm}(i<j\leq k<l \text{ or }i<k<l<j),\\
-{\ol{\genXE}}_{k,l}{\ol{\genXE}}_{j,i}-\genK_{\ol{j}}{\ol{\genXE}}_{j,l}\genK_i^{-1}-v^{-1}{\ol{\genXE}}_{j,l} \genK_{\ol{j}}\genK_i^{-1} &(i=k<j<l),\\
-{\ol{\genXE}}_{k,l}{\ol{\genXE}}_{j,i}-{\genXE}_{k,i}\genK_k^{-1}\genK_j^{-1}-(v-v^{-1})({\genXE}_{j,i}{\ol{\genXE}}_{k,j}-{\ol{\genXE}}_{k,j}{\genXE}_{j,i})\genK_j^{-1}\genK_{\ol{j}}
&(i<k<j=l ),\\
-{\ol{\genXE}}_{k,l}{\ol{\genXE}}_{j,i}-(v-v^{-1})\genK_{\ol{j}}{\ol{\genXE}}_{j,l}{\genXE}_{k,i}\genK_k^{-1}-v^{-1}(v-v^{-1}){\genXE}_{k,i}{\ol{\genXE}}_{j,l}\genK_{\ol{j}}\genK_k^{-1} &(i<k<j<l),\\
-{\ol{\genXE}}_{k,l}{\ol{\genXE}}_{j,i}+\frac{\genK_j \genK_i-\genK_i^{-1} \genK_j^{-1}  }{v-v^{-1}}+(v-v^{-1})\genK_{\ol{j}}^2\genK_i^{-1}\genK_j\\ \quad{-(v-v^{-1})({\ol{\genXE}}_{j,i}{\genXE}_{i,j}-{\genXE}_{i,j}{\ol{\genXE}}_{j,i})\genK_j\genK_{\ol{j}}} &(i=k\enspace \& \enspace j=l).
\end{array}
\right.
\end{align*}
\end{lem}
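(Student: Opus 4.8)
The plan is to compute ${\ol{\genXE}}_{j,i}{\ol{\genXE}}_{k,l}$ directly, mirroring the strategy used for Lemma~\ref{ne-po1}. As in the statement I would first restrict to $i\le k$; the complementary range is recovered by interchanging $(i,j)$ and $(k,l)$, exactly as in the remarks following Lemmas~\ref{pe-pe1} and \ref{po-po1}. Since ${\ol{\genXE}}_{j,i}$ is assembled from the negative generators $\genF_a,\genF_{\ol{a}}$ while ${\ol{\genXE}}_{k,l}$ is assembled from the positive generators $\genE_b,\genE_{\ol{b}}$, moving one past the other is governed by the cross relations of (QQ4). The first step is therefore to establish two families of single-generator base formulas, in the multi-case format of \eqref{Fpo}: one describing how the basic odd negative generator $\genF_{\ol{a}}={\ol{\genXE}}_{a+1,a}$ passes through ${\ol{\genXE}}_{k,l}$, and its companion describing how $\genE_{\ol{a}}={\ol{\genXE}}_{a,a+1}$ passes through ${\ol{\genXE}}_{j,i}$, each obtained from the anticommutator $\genE_{\ol{i}}\genF_{\ol{j}}+\genF_{\ol{j}}\genE_{\ol{i}}=\cdots$ and the mixed relations $\genE_i\genF_{\ol{j}}-\genF_{\ol{j}}\genE_i=\cdots$, $\genE_{\ol{i}}\genF_j-\genF_j\genE_{\ol{i}}=\cdots$ of (QQ4) together with the recursion of Corollary~\ref{rv_k}. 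The $\genK$-valued right-hand sides of the lemma originate precisely here: unlike the purely $\mathsf{E}$-valued outputs of Lemmas~\ref{pe-pe1}--\ref{po-po1}, the cross brackets in (QQ4) deposit Cartan elements, so Lemma~\ref{KE} will be needed throughout to push the resulting $\genK_{\ol{\bullet}}$ factors back past root vectors into normal form.

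With the base formulas in hand I would dispatch the disjoint cases $i<j\le k<l$ and $i<k<l<j$ at once: every boundary bracket between a factor of ${\ol{\genXE}}_{j,i}$ and a factor of ${\ol{\genXE}}_{k,l}$ has vanishing Kronecker delta in (QQ4) (or is covered by the $|i-j|>1$ relations of (QQ5)), and since both vectors are odd the factors anticommute, yielding ${\ol{\genXE}}_{j,i}{\ol{\genXE}}_{k,l}=-{\ol{\genXE}}_{k,l}{\ol{\genXE}}_{j,i}$. The overlapping cases are then handled by peeling off one generator at a time via Corollary~\ref{rv_k}, commuting it across the other factor with the base formulas, and collecting. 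For the genuinely recursive case $i=k<j<l$ I expect a telescoping argument parallel to the one in Lemma~\ref{ne-po1}: writing ${\ol{\genXE}}_{j,i}=-{\genXE}_{j,i+1}{\genXE}_{i+1,i}+v{\genXE}_{i+1,i}{\genXE}_{j,i+1}$ and iterating, the difference ${\ol{\genXE}}_{j,i}{\ol{\genXE}}_{i,l}-{\ol{\genXE}}_{i,l}{\ol{\genXE}}_{j,i}$ collapses through the chain of intermediate indices down to a single term proportional to ${\ol{\genXE}}_{j,l}$ carrying the $\genK_{\ol{j}}$, $\genK_i^{-1}$ factors recorded in the statement. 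The remaining cases $i<k<j<l$ and $i<k<j=l$ are deduced from $i=k<j<l$ together with the base cases; the commutator ${\genXE}_{j,i}{\ol{\genXE}}_{k,j}-{\ol{\genXE}}_{k,j}{\genXE}_{j,i}$ that surfaces is rewritten using Lemma~\ref{ne-po1}, which is why that lemma appears explicitly in the answer.

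The main obstacle will be the diagonal case $i=k$ and $j=l$, the root-vector lift of the defining anticommutator $\genE_{\ol{i}}\genF_{\ol{i}}+\genF_{\ol{i}}\genE_{\ol{i}}$ of (QQ4), where the two Cartan contributions $\frac{\genK_j\genK_i-\genK_i^{-1}\genK_j^{-1}}{v-v^{-1}}$ and $(v-v^{-1})\genK_{\ol{j}}^2\genK_i^{-1}\genK_j$ must emerge with exactly the right coefficients. I plan to obtain it by specializing $i=k<j<l$ to $l=j$ and re-running the base computation, the delicate point being the appearance of $\genK_{\ol{j}}^2$: it is produced by an odd Cartan anticommutator and must be normalized through the (QQ1) relation $\genK_{\ol{i}}\genK_{\ol{j}}+\genK_{\ol{j}}\genK_{\ol{i}}=2\delta_{i,j}\frac{\genK_i^2-\genK_i^{-2}}{v^2-v^{-2}}$, while the surviving commutator $({\ol{\genXE}}_{j,i}{\genXE}_{i,j}-{\genXE}_{i,j}{\ol{\genXE}}_{j,i})$ is controlled by the $\Omega$-image of Lemma~\ref{ne-po1} recorded just above the statement. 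Tracking these $\genK_{\ol{\bullet}}$ terms through the recursion, rather than the $\mathsf{E}$-terms, is the step most prone to sign and power-of-$v$ errors, so I would cross-check this case by an independent direct expansion of ${\ol{\genXE}}_{j,i}{\ol{\genXE}}_{i,j}$ in small rank. Finally, applying $\Omega$ to the resulting formulas delivers the companion relations against odd positive vectors in the complementary range, completing the catalogue.
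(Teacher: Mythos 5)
Your proposal matches the paper's proof in all essentials: single-generator base formulas for $\genF_{\ol{a}}{\ol{\genXE}}_{i,j}$ and (via $\Omega$) $\genE_{\ol{a}}{\ol{\genXE}}_{j,i}$ derived from (QQ4) and Corollary~\ref{rv_k}, anticommutativity in the disjoint cases, a telescoping recursion for $i=k<j<l$ producing the $\genK_{\ol{j}}{\ol{\genXE}}_{j,l}\genK_i^{-1}$ terms, and reduction of the remaining cases (including the diagonal one) to Lemma~\ref{ne-po1}. The only slip is notational: the telescoping decomposition should carry the bar, ${\ol{\genXE}}_{j,i}=-{\ol{\genXE}}_{j,i+1}\genF_i+v\genF_i{\ol{\genXE}}_{j,i+1}$, as in Corollary~\ref{rv_k}, rather than the unbarred version you wrote.
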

\begin{proof}
    Assume that $1\leq i,j,k,l\leq n$ and $i<j,k<l,i\leq k$. By the relations in (QQ4) and the equations in Corollary \ref{rv_k}, a direct calculation shows that
\begin{align*}
    \genF_{\ol{a}}{\ol{\genXE}}_{i,j}=\left\{
\begin{array}{ll}
- {\ol{\genXE}}_{i,j}\genF_{\ol{a}} &\hspace{-5em}(a<i \text{ or } i<a<j-1 \text{ or } a>j-1),\\
- {\ol{\genXE}}_{i,j}\genF_{\ol{i}}-\genK_{\ol{i+1}}{\ol{\genXE}}_{i+1,j}\genK_i^{-1}-v^{-1}{\ol{\genXE}}_{i+1,j}\genK_{\ol{i+1}}\genK_i^{-1} &(a=i),\\
- {\ol{\genXE}}_{i,j}\genF_{\ol{j-1}}-\genK_{j-1}{\genXE}_{i,j-1}\genK_j-(v-v^{-1}){\genXE}_{i,j-1}\genK_{\ol{j-1}}\genK_{\ol{j}}\\
\quad+v^{-1}(v-v^{-1})\genK_{\ol{j-1}}{\genXE}_{i,j-1}\genK_{\ol{j}} &(a=j-1).
\end{array}
\right.
\end{align*}
Applying $\Omega$, we can obtain the commutation formulas for $\genE_{\ol{a}}{\ol{\genXE}}_{j,i}$:
\begin{align*}
\genE_{\ol{a}}{\ol{\genXE}}_{j,i}
=\left\{
\begin{array}{ll}
- {\ol{\genXE}}_{j,i}\genE_{\ol{a}} &\hspace{-5em}(a<i \text{ or } i<a<j-1 \text{ or } a>j-1),\\
 - {\ol{\genXE}}_{j,i}\genE_{\ol{i}}-{\ol{\genXE}}_{j,i+1}\genK_{\ol{i+1}}\genK_i-v \genK_{\ol{i+1}}{\ol{\genXE}}_{j,i+1}\genK_i &(a=i),\\
- {\ol{\genXE}}_{j,i}\genE_{\ol{j-1}}-{\genXE}_{j-1,i}\genK_{j-1}^{-1}\genK_j^{-1}+(v-v^{-1})\genK_{\ol{j-1}}{\genXE}_{j-1,i}\genK_{\ol{j}}\\
\quad-v(v-v^{-1}){\genXE}_{j-1,i}\genK_{\ol{j-1}}\genK_{\ol{j}}&(a=j-1).
\end{array}
\right.
\end{align*}
Then the lemma is proved case-by-case using Corollary \ref{rv_k} and the above formulas. It is easy to check ${\ol{\genXE}}_{j,i}{\ol{\genXE}}_{k,l}=-{\ol{\genXE}}_{k,l}{\ol{\genXE}}_{j,i}$ for $i<j\leq k<l $ and $i<k<l<j$. For the case $i=k<j<l$, we have 
\begin{align*}
{\ol{\genXE}}_{j,i}{\ol{\genXE}}_{i,l}
&=(-{\ol{\genXE}}_{j,i+1}\genF_i+v\genF_i{\ol{\genXE}}_{j,i+1}){\ol{\genXE}}_{i,l}\\
&=-{\ol{\genXE}}_{j,i+1}\genF_i{\ol{\genXE}}_{i,l}-v\genF_i{\ol{\genXE}}_{i,l}{\ol{\genXE}}_{j,i+1}\\
&=-{\ol{\genXE}}_{j,i+1}({\ol{\genXE}}_{i,l}\genF_i-{\ol{\genXE}}_{i+1,l}\genK_i^{-1}\genK_{i+1})-v({\ol{\genXE}}_{i,l}\genF_i-{\ol{\genXE}}_{i+1,l}\genK_i^{-1}\genK_{i+1}){\ol{\genXE}}_{j,i+1}\\
&=-{\ol{\genXE}}_{i,l}{\ol{\genXE}}_{j,i}+({\ol{\genXE}}_{j,i+1}{\ol{\genXE}}_{i+1,l}+{\ol{\genXE}}_{i+1,l}{\ol{\genXE}}_{j,i+1})\genK_{i+1}\genK_i^{-1},
\end{align*}
then by recursion, we obtain
\begin{align*}
{\ol{\genXE}}_{j,i}{\ol{\genXE}}_{i,l}+{\ol{\genXE}}_{i,l}{\ol{\genXE}}_{j,i}
&=({\ol{\genXE}}_{j,i+1}{\ol{\genXE}}_{i+1,l}+{\ol{\genXE}}_{i+1,l}{\ol{\genXE}}_{j,i+1})\genK_{i+1}\genK_i^{-1}\\
&=(\genF_{\ol{j-1}}{\ol{\genXE}}_{j-1,l}+{\ol{\genXE}}_{j-1,l}\genF_{\ol{j-1}})\genK_{j-1}\genK_i^{-1}\\
&=-\genK_{\ol{j}}{\ol{\genXE}}_{j,l}\genK_i^{-1}-v^{-1}{\ol{\genXE}}_{j,l}\genK_{\ol{j}}\genK_i^{-1}.
\end{align*}
Hence, we have ${\ol{\genXE}}_{j,i}{\ol{\genXE}}_{i,l}=-{\ol{\genXE}}_{i,l}{\ol{\genXE}}_{j,i}-\genK_{\ol{j}}{\ol{\genXE}}_{j,l}\genK_i^{-1}-v^{-1}{\ol{\genXE}}_{j,l}\genK_{\ol{j}}\genK_i^{-1}$.
Similarly, one can verify the cases $i<k<j=l$ and $i=k\enspace\& \enspace j=l $ by Lemma \ref{ne-po1}. And a straightforward computation shows that the case $i<k<j<l$ can be deduced from the case $i=k<j<l$, and we omit the details.
\end{proof}

Observe that if we interchange $(i,j)$ and $(k,l)$, we can obtain another set of commutation formulas for  ${\ol{\genXE}}_{j,i}{\ol{\genXE}}_{k,l}$ by a similar calculation. More precisely, the following holds for $1\leq i,j,k,l\leq n$ satisfying $i<j,k<l,k\leq i$:
 \begin{align*}
{\ol{\genXE}}_{j,i}{\ol{\genXE}}_{k,l}&=\left\{
\begin{array}{ll}
-{\ol{\genXE}}_{k,l}{\ol{\genXE}}_{j,i} &\hspace{-2cm}(k<l\leq i<j  \text{ or }k<i<j<l),\\
-{\ol{\genXE}}_{k,l}{\ol{\genXE}}_{j,i}-{\ol{\genXE}}_{j,l}\genK_{\ol{l}}\genK_i-v \genK_{\ol{l}}{\ol{\genXE}}_{j,l} \genK_i &(k=i<l<j),\\
-{\ol{\genXE}}_{k,l}{\ol{\genXE}}_{j,i}-v^{-1}{\genXE}_{k,i}\genK_i\genK_j+(v-v^{-1})\genK_{\ol{j}}({\ol{\genXE}}_{j,i}{\genXE}_{k,j}-{\genXE}_{k,j}{\ol{\genXE}}_{j,i})\genK_j 
&(k<i<j=l ),\\
-{\ol{\genXE}}_{k,l}{\ol{\genXE}}_{j,i}+(v-v^{-1})\genK_{\ol{l}}{\ol{\genXE}}_{j,l}{\genXE}_{k,i}\genK_i+v^{-1}(v-v^{-1}){\ol{\genXE}}_{j,l}\genK_{\ol{l}}{\genXE}_{k,i}\genK_i &(k<i<l<j).
\end{array}
\right.
\end{align*}

Hence, this together with Lemma \ref{no-po1} gives complete commutation formulas for the odd negative root vectors ${\ol{\genXE}}_{j,i}$ and the odd positive root vectors ${\ol{\genXE}}_{k,l}$ with $i<j, k<l$.

\section{PBW-type basis for the integral form of $\Uvqn$}\label{PBW}
In this section, using the realization of $\Uvqn$, we construct a PBW-type basis for the Lusztig form of the quantum queer superalgebra.

 For $m\geq 1$, let $[m]_v=\ds\frac{v^m-v^{-m}}{v-v^{-1}}$ and $[m]^!_v=[m]_v [m-1]_v\cdots[1]_v.$
We also use the convention $[0]_v=[0]^!_v=1.$ For $c\in \Z,t\geq 1$,  set
$$
\begin{bmatrix}
c\\m
\end{bmatrix}_v
=\frac{[c]_v[c-1]_v\cdots[c-m+1]_v}{[m]^!_v},\quad
\begin{bmatrix}
c\\0
\end{bmatrix}_v
=1.
$$
Generally, for an element $Y$ in an associative $\Q(v)$-algebra and $m\in\N$, let
$$
Y^{(m)}=\frac{Y^m}{[m]^!_v}.
$$
If $Y$ is invertible, define, for $t\geq 1$ and $c\in\Z$,

\begin{equation}\label{Kt}
\aligned
\begin{bmatrix}
Y;c\\ t
\end{bmatrix}_v
&=\prod^t_{s=1}\frac{Yv^{c-s+1}-Y^{-1}v^{-c+s-1}}{v^s-v^{-s}},
\quad \text{and }\;\begin{bmatrix}
Y;c\\ 0
\end{bmatrix}_v=1.
\endaligned
\end{equation}
And denote $\begin{bmatrix}
\genK_i\\t
\end{bmatrix}_v=\begin{bmatrix}
\genK_i;0\\t
\end{bmatrix}_v$ for convenience. 

   For clarity, we adopt the following notations for the product ordering specified in \cite[(25)]{GLL}. For any $m, n \in \mathbb{N}$, $m\le n$, set
   \begin{align*}
   \prod_{m\le h\le n}T_h :=T_mT_{m+1}\cdots T_n ,\qquad \prod_{n\ge  h\ge m}T_h :=T_nT_{n-1}\cdots T_m.
   \end{align*}

   {Let $\N_2 = \{{{0}}, {{1}}\}$
and denote
\begin{align*}
&M_n(\N|\N_2)
   =\{A=(A^{\ol{0}}|A^{\ol{1}}) 
   \enspace 
   | 
   \enspace
   A^{\ol{0}}=(a^{\ol{0}}_{i,j})\in M_n(\N), 
   A^{\ol{1}}=(a^{\ol{1}}_{i,j})\in M_n(\N_2) \},\\
 &M_n(\N|\N_2)^{\pm}
   =\{
   A
   \in M_n(\N|\N_2)
   \enspace
   |
   \enspace
   a_{i,i}^{\ol{0}}=0 \enspace \mbox{for all} \enspace i
   \},\\
   &M_n(\N|\N_2)_r
   =\{A \in M_n(\N|\N_2)
   \enspace 
   | 
   \enspace
   \sum_{1\le i,j \le n} (a^{\ol{0}}_{i,j} + 
   a^{\ol{1}}_{i,j}) =r \}.
\end{align*}}

Let $\mcZ=\Z[v,v^{-1}]$. Referring to \cite[Section 8]{DW}, the Lusztig form for $\Uvqn$, denoted by $\qUZ$, is a $\mcZ$-subsuperalgebra of $\Uvqn$ generated by
\begin{align*}
\genK_i^{\pm1},~
\begin{bmatrix}
\genK_i\\t
\end{bmatrix},~ \genE_j^{(m)},~ \genF^{(m)}_j,~ \genK_{\bi},~ \genE_{\bj},~ \genF_{\bj}\quad (1\leq i\leq n, 1\leq j\leq n-1,t,m\in\N).
\end{align*}
Let ${{\boldsymbol U}^{0}_{v,\mc Z}}$ be the $\mcZ$-subalgebra of $\Uvqn$ generated by $\genK_i^{\pm1},~
\begin{bmatrix}
\genK_i\\t
\end{bmatrix}_v$ and $\genK_{\bi}$ with $1\leq i\leq n$, $t\in \N$, and let ${{\boldsymbol U}^{+}_{v,\mc Z}}$ (resp. ${{\boldsymbol U}^{-}_{v,\mc Z}}$) be the $\mcZ$-subalgebra of $\Uvqn$ generated by $\genE_j^{(m)}$ and $ \genE_{\bj}$ (resp. $\genF^{(m)}_j, \genF_{\bj}$) with $1\leq j<n$. 
Similar to \cite[Proposition 1,7]{Lus2} and \cite[Lemma 9.3.2]{CP}, associated with Proposition \ref{action_uvqn}, we have the following.
\begin{lem}
    For any $1\leq i <n$, $T_i(\qUZ)=\qUZ$.
\end{lem}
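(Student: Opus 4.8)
The plan is to exploit that each $T_i$ in Proposition~\ref{action_uvqn} is a $\Qv$-superalgebra automorphism, so it fixes $v$ and is $\mcZ$-linear and multiplicative; hence $T_i(\qUZ)$ is the $\mcZ$-subsuperalgebra generated by the images of the generators of $\qUZ$, and it suffices to check that $T_i(g)\in\qUZ$ for each listed generator $g$, and likewise that $T_i^{-1}(g)\in\qUZ$. Granting the two inclusions $T_i(\qUZ)\subseteq\qUZ$ and $T_i^{-1}(\qUZ)\subseteq\qUZ$, I would then apply the automorphism $T_i$ to the second to get $\qUZ\subseteq T_i(\qUZ)$, whence equality.

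First I would dispose of the easy generators by inspecting Proposition~\ref{action_uvqn}. The toral generators go to $T_i(\genK_a^{\pm1})=\genK_{s_i(a)}^{\pm1}$ and $T_i\bigl(\begin{bmatrix}\genK_a\\t\end{bmatrix}\bigr)=\begin{bmatrix}\genK_{s_i(a)}\\t\end{bmatrix}$, again generators. Each odd generator occurs only to the first power in the relevant images, and since $\genE_a=\genE_a^{(1)}$, $\genF_a=\genF_a^{(1)}$, $\genK_{\ol a}$ lie in $\qUZ$ while $v-v^{-1}\in\mcZ$, every displayed image of $\genK_{\ol a}$, $\genE_{\ol j}$, $\genF_{\ol j}$ is a $\mcZ$-combination of products of generators, hence in $\qUZ$; for instance $T_i(\genK_{\ol{i+1}})=(v-v^{-1})\genK_{\ol{i+1}}\genF_i\genE_i-(v-v^{-1})\genF_i\genE_i\genK_{\ol{i+1}}+\genK_{\ol i}\in\qUZ$. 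For $|i-j|>1$ the divided powers $\genE_j^{(m)}$, $\genF_j^{(m)}$ are fixed.

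The substantive step will be the divided powers $\genE_j^{(m)}$, $\genF_j^{(m)}$ with $|i-j|\le 1$. For $j=i$ I would compute $T_i(\genE_i^{(m)})=\bigl(-\genF_i\genK_i\genK_{i+1}^{-1}\bigr)^{(m)}$ by pushing all Cartan factors to the right with (QQ2); each commutation produces a power of $v$, so the expression collapses to $(-1)^m v^{c}\genF_i^{(m)}\genK_i^{m}\genK_{i+1}^{-m}$ for an explicit integer $c$, which lies in $\qUZ$, and symmetrically for $\genF_i^{(m)}$. For $|i-j|=1$ the even generators $\genE_i,\genE_j$ satisfy exactly the type-$A$ quantum Serre relation (QQ6), so Lusztig's higher-Serre identities for the case $a_{ij}=-1$ apply verbatim: $T_i(\genE_j^{(m)})$ expands as a $\mcZ$-linear combination $\sum_{a+b=m}(-1)^{a}v^{\,c_{a,b}}\genE_i^{(a)}\genE_j^{(m)}\genE_i^{(b)}$, each summand lying in $\qUZ$ (cf.\ \cite[Prop.~7]{Lus2}, \cite[Lemma~9.3.2]{CP}); the same holds for $\genF_j^{(m)}$. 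The handling of $T_i^{-1}$ is entirely parallel, reading off the inverse formulas in Proposition~\ref{action_uvqn}.

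The hard part is precisely this $|i-j|=1$ divided-power case: one must verify that the coefficients $v^{\,c_{a,b}}$ in the higher-Serre expansion lie in $\mcZ$ and not merely in $\Qv$. I would prove this by induction on $m$, starting from the divided-power Serre relation $\genE_i^{(2)}\genE_j-\genE_i\genE_j\genE_i+\genE_j\genE_i^{(2)}=0$ extracted from (QQ6), exactly as in the rank-two computations of \cite{Lus2} and \cite{CP}; all remaining cases reduce to routine use of the relations in Definition~\ref{defqn}.
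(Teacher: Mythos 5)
Your proposal is correct and follows essentially the same route as the paper: reduce to checking $T_i^{\pm1}$ on the generators of $\qUZ$, handle the even part (including the divided powers $\genE_j^{(m)},\genF_j^{(m)}$ for $|i-j|\le 1$) by the Lusztig/Chari--Pressley computation cited as \cite[Proposition 1,7]{Lus2} and \cite[Lemma 9.3.2]{CP}, and observe that the odd generators occur only to the first power so their images are visibly $\mcZ$-combinations of generators. The only difference is that you unpack the higher-Serre expansion that the paper leaves inside the citation.
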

\begin{proof}
    It suffices to verify that both $T_i$ and $T_i^{-1}$ map each generator of $\qUZ$ into $\qUZ$. The even part is shown specifically in \cite[Proposition 1,7]{Lus2}. By Proposition \ref{action_uvqn}, we have
    \begin{align*}
&T_{i} ({\genE}_{\ol{i}}) = -{\genK}_{\ol{i+1}}{\genF}_{i}^{(1)}  {\genK}_{i} +v{\genF}_{i}^{(1)}{\genK}_{\ol{i+1}}  {\genK}_{i},\quad
T_{i} ({\genF}_{\ol{i}}) = - {\genK}_{i}^{-1} {\genE}_{i}^{(1)}{\genK}_{\ol{i+1}} +v^{-1} {\genK}_{\ol{i+1}}{\genK}_{i}^{-1} {\genE}_{i}^{(1)};\\
&T_{i} ({\genE}_{\ol{j}}) =   -  {\genE}_{i}^{(1)} {\genE}_{\ol{j}}  +   {v}^{-1} {\genE}_{\ol{j}} {\genE}_{i}^{(1)}, \quad
T_{i} ({\genF}_{\ol{j}}) = -   {\genF}_{\ol{j}} {\genF}_{i}^{(1)} +   {v} {\genF}_{i}  ^{(1)}{\genF}_{\ol{j}} 
	 \quad \mbox{ for} \enspace |i - j| = 1;\\
&T_{i} ({\genE}_{\ol{j}}) = {\genE}_{\ol{j}}, \quad
	T_{i} ({\genF}_{\ol{j}}) = {\genF}_{\ol{j}} 
	\quad \mbox{ for}\enspace |i - j| > 1.
    \end{align*}
Since $T_{i} ({\genK}_{\ol{j}}  )$ is $  {\genK} _{\ol{s_{j}(i)}}$ or $(v-v^{-1}) {\genK}_{\ol{i+1}}\genF_i^{(1)}\genE_i^{(1)} -(v-v^{-1})\genF_i^{(1)}\genE_i^{(1)} {\genK}_{\ol{i+1}} +\genK_{\ol{i}}$, it follows that $T_i(\qUZ)\subset \qUZ$. Also, $T_i^{-1}(\qUZ)\subset \qUZ$ can be verified in a similar way.
\end{proof}
Hence, fix a reduced expression $w_0=s_{i_1}s_{i_2} \cdots s_{i_N}$ of the longest element in the symmetric group $\Sn$, for $m\in \N$, the divided power of root vectors can be given by
\begin{align*}    &\genE_{\beta_k}^{(m)}=T_{i_1}T_{i_2} \cdots T_{i_{k-1}}(\genE_{i_{k}}^{(m)})\in\qUZ,\quad {\ol{\genXE}}_{\beta_k}^{(m)}=T_{i_1}T_{i_2} \cdots T_{i_{k-1}}(\genE_{\ol{i_{k}}}^{(m)})\in\qUZ,\\    &\genF_{\beta_k}^{(m)}=T_{i_1}T_{i_2} \cdots T_{i_{k-1}}(\genF_{i_{k}}^{(m)})\in\qUZ,\quad {\ol{\genF}}_{\beta_k}^{(m)}=T_{i_1}T_{i_2} \cdots T_{i_{k-1}}(\genF_{\ol{i_{k}}}^{(m)})\in\qUZ.
\end{align*}


{We now recall the realization of $\Uvqn$ specified in \cite{DGLW2} to investigate the structure of the PBW-type basis for the integral form $\qUZ$.}

Let $\Lambda(n,r)=\{\lambda=(\lambda_1, \cdots, \lambda_n) \in \N^n~|~|\lambda|:=\sum_i\lambda_i=r \}$ and $\mathfrak{S}_r$ be the symmetric group on $r$ letters with generators $s_i=(i,i+1)$ for $1\leq i <r$. For any $\lambda=(\lambda_1, \cdots, \lambda_n) \in \Lambda(n,r)$, the Yong subgroup $\mathfrak{S}_{\lambda}$ is the subgroup 
\begin{align*}
    \mathfrak{S}_{\lambda}:=\langle s_i~|~1\leq i <r \backslash \{\tilde{\lambda}_j~|~1\leq j <n\}\rangle ,
\end{align*}
where $\tilde{\lambda}_j=\sum\limits_{k=1}^{j}\lambda_k$ and $\tilde{\lambda}_0 =0$. And then denote $x_\lambda=\sum_{w\in \mathfrak{S}_\lambda}T_w$.

Let $R$ be a commutative ring and $R^{\times}$ be its group of units such that $2\in R^{\times}$. Let $q\in R^{\times}$. The Hecke-Clifford superalgebra ${\mathcal{H}^c_{r,R}}$ is a superalgebra over $R$ generated by even generators $T_i:=T_{s_i}$ and odd generators $c_j$ for $1\leq i <r, 1\leq j \leq r$, and the following relations:
\begin{align*}
    &c_i^2=-1,\quad c_i c_j=-c_j c_i \quad (1\leq i, j \leq r-1);\\
    &(T_i-q)(T_i +1)=0,\quad T_i T_j= T_j T_i\quad (q\in R,~|i-j|>1),\\
    &T_i T_{i+1} T_i=T_{i+1}T_i T_{i+1}\quad (i\neq r-1);\\
    &T_ic_j=c_jT_i\quad (j\neq i,i+1);\\
    &T_ic_i=c_{i+1}T_i,\quad T_ic_{i+1}=c_iT_i-(q-1)(c_i-c_{i+1}).
\end{align*}

 Denote the parity of $z$ by $p(z)$. Following \cite[Definition 1.1]{CW}, for right $ {\mathcal{H}^c_{r,R}}$-modules $M$ and $N$, Hom$^s_{{\mathcal{H}^c_{r,R}}}(M,N)$ is defined to be the $R$-supermodule generated by all ${\mathcal{H}^c_{r,R}}$-superhomomorphisms $\Phi:M\to N$ satisfying $\Phi(mh)=(-1)^{p(\Phi)p(h)}\Phi(m)h$ with $m\in M,~h\in {\mathcal{H}^c_{r,R}}$.
Then, referring to \cite[Section 2]{DGLW2}, the standardised queer $q$-Schur superalgebra $\QqnR$ is defined as 
\begin{align*}
   \QqnR:=\text{End}^s_{\mathcal{H}^c_{r,R}} (\bigoplus_{\lambda\in \Lambda(n,r)} x_{\lambda}{\mathcal{H}^c_{r,R}})=\bigoplus_{\lambda,\mu\in\Lambda(n,r)} \text{Hom}^s_{\mathcal{H}^c_{r,R}}(x_{\lambda}\mathcal{H}^c_{r,R},x_{\mu}\mathcal{H}^c_{r,R}).
\end{align*}
And if the ground ring $R=\mcZ$ or $R=\Qv$ with $q=v^2$, by \cite[Section 3]{DGLW2}, the standardise queer $v$-Schur superalgebras are defined as
\begin{align*}
    {\mathcal{Q}}^{s}_{v,\mcZ}(n,r):={\mathcal{Q}}^{s}_{q}{({n,r;\mcZ})},\qquad 
    \Qvnr:={\mathcal{Q}}^{s}_{q}{({n,r;\Qv})}.
\end{align*}

Set $A^{\star}=(A^{\ol{0}}|A^{\ol{1}}):=\begin{pmatrix}
   A^{\ol{0}} & A^{\ol{1}}\\
   A^{\ol{1}} & A^{\ol{0}}
\end{pmatrix}\in M_n(\N|\N_2)$.
For any $A^{\star}=(A^{\ol{0}}|A^{\ol{1}})\in M_n(\N|\N_2)^{\pm}, ~\boldsymbol{j}\in\Z^n$, by \cite[Section 5]{DGLW2}, the element $A^{\star}(\boldsymbol{j},r)$ in $\Qvnr$ is defined as 
\begin{equation}\label{Ajr}
    A^{\star}(\boldsymbol{j},r)=\left\{
\begin{aligned}
&\sum_{\substack{\lambda\in\Lambda(n,r-|A^{\star}|)}}v^{\lambda\ast{\boldsymbol{j}}}[A^{\ol{0}}+\lambda~|~A^{\ol{1}}], &\text{ if } |A|\geq r;\\
&0, &\text{ otherwise },
\end{aligned}
\right.
\end{equation}
where $|A^{\star}|=\sum_{i,j}a_{i,j}$, $a_{i,j}=a^{\ol{0}}_{i,j}+a^{\ol{1}}_{i,j}$, $\lambda\ast{\boldsymbol{j}}=\sum_{i=1}^n \lambda_i j_i$, $A^{\ol{0}}+\lambda:=A^{\ol{0}}+diag(\lambda)$ .

Recall some definitions in \cite[Section 8]{DGLW2}:
\begin{align*}
  &\Qvns:=\prod_{r\geq 1} \Qvnr,\\
  & A^{\star}(\boldsymbol{j}):=\sum_{r\geq 1} A^{\star}(\boldsymbol{j},r)\in\Qvns \text{ for } A^{\star}\in M_n(\N|\N_2)^{\pm }, \boldsymbol{j}\in\Z^n,\\
  &\Avn:=\mathrm{span}_{\Qv}\{A^{\star}(\boldsymbol{j})~|~A^{\star}\in M_n(\N|\N_2)^{\pm }, \boldsymbol{j}\in\Z^n\}\subset \Qvns.
  \end{align*}

Denote by $E_{i,j}\in M_n(\N)$ the matrix with $1$ at the $(i,j)$ position and $0$ elsewhere. The following two lemmas are taken from \cite{DGLW2}.

\begin{lem}\cite[Lemma 8.1, Theorem 8.3(2)]{DGLW2}\label{DGLW2}
    The $\Qv$-space $\Avn$ is a sub-superalgebra of $\Qvns$ generated by 
    \begin{align*}
    &\{ (O|O)(\pm\boldsymbol{ \ep_i}),~ (E_{j,j+1}|O)(\boldsymbol{0}),~ (E_{j+1,j}|O)(\boldsymbol{0}),~ (O|E_{i,i})(\boldsymbol{0}),\\
   &\quad  (O|E_{j,j+1})(\boldsymbol{0}),~ (O|E_{j+1,j})(\boldsymbol{0})~|~1\le i \le n, 1\le j \le n-1\}.    
    \end{align*}
    Moreover, the set $\{ A^{\star}(\boldsymbol{j})~|~A^{\star}\in M_n(\N|\N_2)^{\pm}, \boldsymbol{j}\in{\Z}^n\}$ form a $\Qv$-basis of $\Avn$.
\end{lem}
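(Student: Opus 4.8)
The plan is to treat this as a Beilinson--Lusztig--MacPherson-type stabilization statement: $\Avn$ sits inside the direct product $\Qvns=\prod_{r\ge 1}\Qvnr$ of standardized queer $v$-Schur superalgebras, and two things must be established — that the displayed elements $A^{\star}(\boldsymbol{j})$ are linearly independent (hence a basis of their span $\Avn$), and that the six listed families of Chevalley-type elements generate $\Avn$ as a superalgebra. I would dispose of the basis property first, since it is the more structural of the two and is needed in the generation argument.

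For linear independence, recall $A^{\star}(\boldsymbol{j})=\sum_{r\ge 1}A^{\star}(\boldsymbol{j},r)$, where in each component $A^{\star}(\boldsymbol{j},r)=\sum_{\lambda}v^{\lambda\ast\boldsymbol{j}}[A^{\ol{0}}+\lambda\,|\,A^{\ol{1}}]$ is a $\Qv$-combination of genuine basis elements of $\Qvnr$. Suppose a finite sum $\sum_{A^{\star},\boldsymbol{j}}c_{A^{\star},\boldsymbol{j}}\,A^{\star}(\boldsymbol{j})=0$. Projecting to a fixed component $\Qvnr$ and reading off the coefficient of each distinct basis element $[A^{\ol{0}}+\lambda\,|\,A^{\ol{1}}]$ separates the off-diagonal data automatically, since distinct $A^{\star}$ never contribute to the same basis element. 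For a fixed $A^{\star}$ the surviving relation reads $\sum_{\boldsymbol{j}}c_{A^{\star},\boldsymbol{j}}\,v^{\lambda\ast\boldsymbol{j}}=0$ for every admissible $\lambda$ and all large $r$; because the functions $\boldsymbol{j}\mapsto v^{\lambda\ast\boldsymbol{j}}$ are linearly independent as $\lambda$ ranges over $\Z^n$ (a Vandermonde, or linear-independence-of-characters, argument), every $c_{A^{\star},\boldsymbol{j}}$ vanishes. This yields the basis statement.

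For generation, let $\mathcal{G}$ denote the subalgebra of $\Qvns$ generated by the six families. Each listed generator is a special case of $A^{\star}(\boldsymbol{j})$, hence lies in $\Avn$; the content is to show $\Avn=\mathcal{G}$. The engine is a set of multiplication formulas in $\Qvnr$: computing the product of one generator — say $(E_{j,j+1}|O)(\boldsymbol{0})$ or one of its odd analogues $(O|E_{j,j+1})(\boldsymbol{0})$, $(O|E_{i,i})(\boldsymbol{0})$ — with a general $A^{\star}(\boldsymbol{j})$, and verifying that these identities stabilize uniformly in $r$ to well-defined relations in $\Qvns$. Such formulas show at once that $\mathcal{G}\cdot\Avn\subseteq\Avn$, so $\mathcal{G}\subseteq\Avn$. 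For the reverse inclusion I would introduce a Beilinson--Lusztig--MacPherson-type partial order on $M_n(\N|\N_2)^{\pm}$ refining the total off-diagonal content, and show that a suitable monomial in the generators reproduces the target $A^{\star}(\boldsymbol{j})$ as a leading term with a unit coefficient (a power of $v$), plus terms strictly lower in the order. Induction on this order then places every basis element inside $\mathcal{G}$, giving $\Avn\subseteq\mathcal{G}$ and hence equality; in particular $\Avn$ is closed under multiplication and is a sub-superalgebra.

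The main obstacle is the multiplication formulas in the queer $v$-Schur superalgebra, specifically the bookkeeping of the odd (Clifford) contributions and the signs imposed by the $\Z_2$-grading, together with the verification that each formula stabilizes uniformly in $r$. The even block of this computation runs parallel to Du's and the Beilinson--Lusztig--MacPherson treatment of $\mathfrak{gl}_n$ in \cite{BLM}, but the generators $(O|E_{i,i})(\boldsymbol{0})$ and $(O|E_{j,j\pm 1})(\boldsymbol{0})$ force one to track the Hecke--Clifford relations carefully and to confirm that the leading-term coefficients remain units once the super-signs are accounted for; this is where the bulk of the technical work lies.
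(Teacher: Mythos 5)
This statement is not proved in the paper at all: it is quoted verbatim from \cite[Lemma 8.1, Theorem 8.3(2)]{DGLW2} as an external input, so there is no internal proof to compare against. Your reconstruction is nonetheless a faithful outline of how the cited reference (and the BLM tradition it follows) actually establishes the result, and it is consistent with the machinery this paper itself deploys later: the linear-independence step via projection to a single $\Qvnr$, separation by off-diagonal data, and linear independence of the characters $\lambda\mapsto v^{\lambda\ast\boldsymbol{j}}$ is the standard stabilization argument; the generation step via multiplication formulas and triangularity with respect to the order $\preceq$ is exactly what the paper reuses in Lemma \ref{Eij}, in \eqref{Aj}, and in the surjectivity proof of Section 4. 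Two points deserve sharpening. First, in the queer setting the odd multiplication formulas of \cite{DGLW2} are only valid under ordering hypotheses (the SDP-type conditions reflected in the vanishing assumptions of Lemma \ref{Eij}), so your induction on the partial order must fix a specific ordering of the root-vector monomials --- as the paper does with $\rho_i(A^{\star}_{\pm})$ --- rather than work with arbitrary products; you flag this as the main obstacle but it is a constraint on the shape of the argument, not just a computation. Second, the leading coefficient of the monomial reproducing $A^{\star}(\boldsymbol{j})$ is not merely a power of $v$: it carries factors $\prod[a^{\ol{0}}_{i,j}]^!_v$, which are invertible in $\Qv$ (so your induction closes for the stated $\Qv$-basis claim) but would not be units over $\mcZ$; this is precisely why the integral statements in Section 3 pass to divided powers. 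With those caveats your plan is sound.
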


\begin{lem}\cite[Theorem 8.3(3), Theorem 9.3]{DGLW2}\label{qiso}
    For all $\leq i\leq n,1\leq j<n$, there is a superalgebra isomorphism $\xi_n$ defined by 
    \begin{align*}
       & \xi_n : \Uvqn\to \Avn,\\
       &\genE_j\mapsto (E_{j,j+1}|O)(\boldsymbol{0}),\quad \genF_j\mapsto (E_{j+1,j}|O)(\boldsymbol{0}), \quad \genK_i^{\pm 1}\mapsto (O|O)(\pm\boldsymbol{ \ep_i}),\\
       &\genE_{\bj}\mapsto (O|E_{j,j+1})(\boldsymbol{0}),\quad \genF_{\bj}\mapsto (O|E_{j+1,j})(\boldsymbol{0}), \quad \genK_{\bi}\mapsto (O|E_{i,i})(\boldsymbol{0}).
    \end{align*}
\end{lem}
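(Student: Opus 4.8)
The plan is to construct $\xi_n$ as the map assembled from the Schur--Weyl type homomorphisms attaching $\Uvqn$ to the standardized queer $v$-Schur superalgebras. For each $r\geq 1$ one has a superalgebra map $\zeta_r\colon\Uvqn\to\Qvnr$ sending the generators to the matrix-unit elements displayed in the statement; collecting these yields $a\mapsto(\zeta_r(a))_{r}$ into the direct product $\Qvns=\prod_{r\geq1}\Qvnr$, with image inside $\Avn$. First I would verify that $\xi_n$ is a well-defined superalgebra homomorphism, i.e.\ that the proposed images are homogeneous of the correct parity (even $\genK_i^{\pm1},\genE_j,\genF_j$ versus odd $\genK_{\bi},\genE_{\bj},\genF_{\bj}$) and satisfy every defining relation (QQ1)--(QQ6) of Definition \ref{defqn}. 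This reduces, inside each $\Qvnr$, to a finite list of identities among the elements $A^{\star}(\boldsymbol{j},r)$ of \eqref{Ajr}. The even relations (QQ1), (QQ2) and the Serre relations (QQ6) are the classical $v$-Schur computations, whereas the genuinely new checks are the odd relations (QQ3)--(QQ5), where the Clifford generators $c_j$ of $\HC$ intervene and produce the superalgebra mixing such as $\genE_{\bi}^2=-\tfrac{v-v^{-1}}{v+v^{-1}}\genE_i^2$.

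Granting that $\xi_n$ is a homomorphism, surjectivity is immediate: by Lemma \ref{DGLW2} the algebra $\Avn$ is generated precisely by the six families of elements that are the $\xi_n$-images of the generators of $\Uvqn$, so $\xi_n$ hits a generating set and is onto. The entire weight of the lemma therefore rests on injectivity, which I would establish by a basis comparison. Using the quantum root vectors of Proposition \ref{rv} together with the complete commutation formulas of Lemmas \ref{pe-pe1}--\ref{no-po1}, one obtains a PBW-type spanning set of $\Uvqn$: ordered monomials consisting of the negative root vectors, the Cartan part $\genK_i^{\pm1},\genK_{\bi}$, and the positive root vectors, in which each odd root vector appears to power at most one (forced by the quadratic relations in (QQ5)). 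This spanning set is naturally indexed by $M_n(\N|\N_2)^{\pm}\times\Z^n$, the same data labeling the basis $\{A^{\star}(\boldsymbol{j})\}$ of $\Avn$ from Lemma \ref{DGLW2}.

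The heart of the proof is then to show that $\xi_n$ carries this PBW spanning set, under a suitable partial order on $M_n(\N|\N_2)^{\pm}\times\Z^n$ refining the dominance order on matrices, to the basis element $A^{\star}(\boldsymbol{j})$ plus a combination of strictly lower terms. Establishing this unitriangularity requires the explicit multiplication rules for the $A^{\star}(\boldsymbol{j},r)$ in $\Qvnr$ supplied by \cite{DGLW2}, applied to compute the leading term of each product of root-vector images. Once unitriangularity is in place, the images of the PBW monomials are linearly independent (since the $A^{\star}(\boldsymbol{j})$ are), whence $\xi_n$ is injective; this simultaneously shows that the PBW set is a genuine basis of $\Uvqn$. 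The hard part will be controlling these leading terms in the odd sector: the signs coming from the $\Z_2$-grading and the mixing of even and odd root vectors in the commutation formulas (for instance the appearance of $\genK_{\bj}$ and mixed products in Lemmas \ref{ne-po1} and \ref{no-po1}) make the leading-term bookkeeping substantially more delicate than in the purely even $\mathfrak{gl}_n$ case of \cite{BLM}.
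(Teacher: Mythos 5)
There is no proof of this statement in the paper to compare against: Lemma \ref{qiso} is imported verbatim from \cite{DGLW2} (Theorem 8.3(3) and Theorem 9.3 there), and the authors explicitly say the lemma is ``taken from'' that reference. So your proposal is necessarily a reconstruction of the external argument rather than an alternative to anything the paper does. As a reconstruction it is sound in outline and matches the standard BLM-style strategy: well-definedness by checking (QQ1)--(QQ6) on the generators (this is exactly the content of the map $\boldsymbol{\xi}_{n,r}$ recalled in Section \ref{surjective}), surjectivity from the generating set of Lemma \ref{DGLW2}, and injectivity by a unitriangularity computation showing that ordered root-vector monomials map to $A^{\star}(\boldsymbol{j})$ plus lower terms. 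Indeed the paper itself carries out precisely that unitriangularity computation in \eqref{Aj} and Theorem \ref{basis} --- but there it is a \emph{consequence} of the isomorphism (Lemmas \ref{Eij} and \ref{Eji} invoke Lemma \ref{qiso} to identify root vectors with elements of $\Avn$ before applying the multiplication formulas of \cite{DGLW2}), whereas in your plan it must be run without presupposing $\xi_n$; this is not circular, since the multiplication rules for the $A^{\star}(\boldsymbol{j},r)$ are intrinsic to $\Avn$ and you only apply them to images under an already-verified homomorphism, but you should say so explicitly. The two genuinely laborious steps --- verifying the odd relations (QQ3)--(QQ5) on the images, and the sign bookkeeping in the odd sector of the leading-term computation --- are correctly identified but not executed, so the proposal is a viable plan rather than a complete proof.
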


Let $A^{\star}=(a_{i,j}^{\ol{0}}|a_{i,j}^{\ol{1}})\in M_n(\N|\N_2)$. 
We now consider the image of the products of positive root vectors in superalgebra $\Avn$.
Similar to \cite[(3.4)]{DW}, the order of the product of positive root vectors is specified as follows. 
   For the $i$-th row (reading to the right) $a_{i,i+1}, a_{i,i+2},\cdots , a_{i,n}$ of the upper triangular part of the matrix $A^{\star}$, which denotes as $A^{\star}_{+}$, we put
    \begin{align*}
    &\rho_i(A^{\star}_{+}):=[({{\genXE}}_{i,i+1})^{{a}^{\ol{0}}_{i,i+1}} (\ol{{\genXE}}_{i,i+1}) ^{{a}^{\ol{1}}_{i,i+1}})]
    \cdots [({{\genXE}} _{i,n}) ^{{a}^{\ol{0}}_{i,n}} ({\ol{\genXE}} _{i,n})^{{a}^{\ol{1}}_{i,n}}],\\
    &\rho_i(A^{\star}_{(+)})=:[({{\genXE}}_{i,i+1})^{({a}^{\ol{0}}_{i,i+1})} (\ol{{\genXE}}_{i,i+1}) ^{{a}^{\ol{1}}_{i,i+1}})]
    \cdots [({{\genXE}} _{i,n}) ^{({a}^{\ol{0}}_{i,n})} ({\ol{\genXE}} _{i,n})^{{a}^{\ol{1}}_{i,n}}],
    \end{align*}
     {since the multiplication formulas for odd elements given in \cite{DGLW2} need to satisfy the SDP condition (See \cite[Theorem 4.6, Corollary 4.7]{DGLW} for more details). And also all the multiplication formulas are carried out from right to left.} Then we set 
    \begin{align*}
     &{\genXE}_{A^{\star}_{+}}:=\rho_{n-1}(A^{\star}_{+})\cdots \rho_{1}(A^{\star}_{+})= \prod_{n-1\ge i\ge 1} \prod_{i+1\leq j\leq n} ({{\genXE}}_{i, j})^{{a}^{\ol{0}}_{i, j}} (\ol{{\genXE}}_{i, j})^{{a}^{\ol{1}}_{i, j}},\\
     &{\genXE}_{A^{\star}_{(+)}}:=\rho_{n-1}(A^{\star}_{(+)})\cdots \rho_{1}(A^{\star}_{(+)})= \prod_{n-1\ge i\ge 1} \prod_{i+1\leq j\leq n} ({{\genXE}}_{i, j})^{({a}^{\ol{0}}_{i, j})} (\ol{{\genXE}}_{i, j})^{{a}^{\ol{1}}_{i, j}}.
    \end{align*}

    {For ${\tau_i}\in \N_2$, denote
    \begin{align*}
    {\genK}_{A^{\star}_0}:=\prod\limits_{ i=1}^{n} \genK_{\ol {i}}^{a_{i,i}^{\ol{1}}},\qquad 
    {\genK}_{A^{\star}_0, \tau} :=\prod\limits_{ i=1}^{n}(\genK^{\tau_i}_{i} \begin{bmatrix}
\genK_i\\{{a}^{\ol{0}}_{i, i}}
\end{bmatrix}_v \genK_{\ol {i}}^{a_{i,i}^{\ol{1}}}).
    \end{align*}}

For any $1\leq h \leq n-1$, $1\leq k\leq n$, denote 
\begin{align*}
    \tilde{a}_{h,k}=\sum\limits_{ j=1}^{k-1} \sum\limits_{ i=1}^{n}a_{i,j}+\sum\limits_{ i=1}^{h}a_{i,k}.
 \end{align*}

By the multiplication formulas given in \cite[Proposition 5.2, Proposition 5.4]{DGLW2}, we obtain the following results.
\begin{lem}\label{Eij}
Let $M=(M^{\ol{0}}|M^{\ol{1}})=(m_{k,l}^{\ol{0}}|m_{k,l}^{\ol{1}}) \in M_n(\N|\N_2)$, $1\leq i< j \leq n$, $l\in\N, t\in \N_2$. 
Assume that $m_{k,l}^{\ol{0}}=m_{k,l}^{\ol{1}}=0$ for all $k\geq l$ and $k>i$ and $k=i~\&~l<j$. Then we have
 \begin{align*}
(1)\quad 
&\genXE_{i,j}\cdot M(\boldsymbol{0})=(-1)^{j-i+1}v^{\sum_{t>j}m_{i,t}}[m_{i,j}^{\ol{0}}+1]_v(M^{\ol{0}}+E_{i,j}|M^{\ol{1}})(\boldsymbol{0}),\\
&\ol{{\genXE}}_{i, j}\cdot M(\boldsymbol{0})=(-1)^{\tilde{m}^{\ol{1}}_{j-2,j}+j-i+1} v^{\sum_{t>j}m_{i,t}}(M^{\ol{0}}|M^{\ol{1}}+E_{i,j})(\boldsymbol{0});
\\
(2)\quad 
&({{\genXE}}_{i, j})^{l}\cdot M(\boldsymbol{0})=(-1)^{l(j-i+1)}v^{l(\sum_{t>j}m_{i,t})} \prod\limits_{ u=1}^{l}[m_{i,j}^{\ol{0}}+u]_v
(M^{\ol{0}}+lE_{i,j}|M^{\ol{1}})(\boldsymbol{0}),\\
&(\ol{{\genXE}}_{i, j})^t\cdot M(\boldsymbol{0})=(-1)^{t(\tilde{m}^{\ol{1}}_{j-2,j}+j-i+1)} v^{t(\sum_{t>j}m_{i,t})}(M^{\ol{0}}|M^{\ol{1}}+tE_{i,j})(\boldsymbol{0}).
\end{align*}
\end{lem}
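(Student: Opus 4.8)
Our plan is to prove both parts by induction on the ``length'' $j-i$ of the root vector, reducing every long root vector to the simple generators via the commutator recursion of Corollary \ref{rv_k} (and Proposition \ref{rv}), and then invoking the multiplication formulas of \cite[Proposition 5.2, Proposition 5.4]{DGLW2} for the images of the simple generators under the isomorphism $\xi_n$ of Lemma \ref{qiso}. For the base case $j=i+1$ we have ${\genXE}_{i,i+1}=\genE_i$ and ${\ol{\genXE}}_{i,i+1}=\genE_{\ol{i}}$, so the asserted identities are exactly the multiplication formulas of \cite{DGLW2} for the Chevalley-type generators acting on $M(\boldsymbol{0})$. Here one uses the vanishing hypotheses on $M$, namely $m^{\ol{0}}_{k,l}=m^{\ol{1}}_{k,l}=0$ for $k\ge l$, for $k>i$, and for $k=i$ with $l<j$, which force $m_{i,i+1}=0$; thus the quantum-integer factor reduces to $[1]_v=1$ and the sign $(-1)^{j-i+1}=(-1)^2=1$, the $v$-power $v^{\sum_{t>i+1}m_{i,t}}$ and the matrix shift all match.

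For the inductive step we take $k=i+1$ in Corollary \ref{rv_k}, writing
\[
{\genXE}_{i,j}=-\genE_i\,{\genXE}_{i+1,j}+v^{-1}{\genXE}_{i+1,j}\,\genE_i,
\]
and apply both sides to $M(\boldsymbol{0})$. In the second summand, $\genE_i$ acts first by the base formula (its hypotheses hold since $m_{i,i+1}=0$), producing $M^{\ol{0}}+E_{i,i+1}$, to which ${\genXE}_{i+1,j}$ is applied by the induction hypothesis; its hypotheses are restored because the new box lies in row $i$ while ${\genXE}_{i+1,j}$ only sees rows $\ge i+1$, so this yields a single multiple of $(M^{\ol{0}}+E_{i,i+1}+E_{i+1,j}\,|\,M^{\ol{1}})(\boldsymbol{0})$. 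In the first summand ${\genXE}_{i+1,j}$ acts first by induction, giving a multiple of $M^{\ol{0}}+E_{i+1,j}$; now $\genE_i$ no longer satisfies the clean hypotheses, so one uses the general form of the \cite{DGLW2} formula, which (after the vanishing conditions kill all but one column of row $i+1$) produces exactly two surviving terms: a ``diagonal'' term proportional to $(M^{\ol{0}}+E_{i,i+1}+E_{i+1,j}\,|\,M^{\ol{1}})(\boldsymbol{0})$ and an ``off-diagonal'' term in which the box at $(i+1,j)$ is raised to $(i,j)$, proportional to $(M^{\ol{0}}+E_{i,j}\,|\,M^{\ol{1}})(\boldsymbol{0})$. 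The key point is that the two diagonal contributions cancel; the surviving off-diagonal term is the asserted expression once the sign $(-1)^{j-i+1}$, the power $v^{\sum_{t>j}m_{i,t}}$ and the factor $[m^{\ol{0}}_{i,j}+1]_v$ are collected.

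The odd case ${\ol{\genXE}}_{i,j}$ is handled in parallel using ${\ol{\genXE}}_{i,j}=-\genE_i\,{\ol{\genXE}}_{i+1,j}+v^{-1}{\ol{\genXE}}_{i+1,j}\,\genE_i$ from Corollary \ref{rv_k} together with the odd multiplication formula of \cite[Proposition 5.4]{DGLW2}; the only new feature is the super-sign $(-1)^{\tilde{m}^{\ol{1}}_{j-2,j}}$, which records the cost of moving the odd box at $(i,j)$ past the odd boxes already placed in the prescribed right-to-left order, in accordance with the SDP condition. Part $(2)$ for the even vectors then follows by iterating $(1)$: each further application of ${\genXE}_{i,j}$ raises $m^{\ol{0}}_{i,j}$ by one and leaves the vanishing hypotheses intact, so the quantum integers accumulate into $\prod_{u=1}^{l}[m^{\ol{0}}_{i,j}+u]_v$ while the signs and $v$-powers multiply to $(-1)^{l(j-i+1)}$ and $v^{l(\sum_{t>j}m_{i,t})}$; for the odd vectors $t\in\N_2=\{0,1\}$, so $(2)$ is immediate from $(1)$ and the trivial case $t=0$.

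I expect the main obstacle to be the coefficient bookkeeping in the inductive step: verifying that the two diagonal contributions cancel exactly (which pins down the balance between the $v^{-1}$ in the recursion and the $v$-powers emitted by the base and general multiplication formulas), and confirming that the surviving off-diagonal coefficient is precisely $v^{\sum_{t>j}m_{i,t}}[m^{\ol{0}}_{i,j}+1]_v$ with the correct sign. In the odd case this is compounded by the super-signs, so one must track the parities $\tilde{m}^{\ol{1}}_{j-2,j}$ consistently with the right-to-left multiplication convention.
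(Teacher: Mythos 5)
Your proposal follows essentially the same route as the paper: induction on $j-i$ using the recursion ${\genXE}_{i,j}=-\genE_i{\genXE}_{i+1,j}+v^{-1}{\genXE}_{i+1,j}\genE_i$ from Corollary \ref{rv_k}, with the base case read off from \cite[Propositions 5.2 and 5.4]{DGLW2} via $\xi_n$, cancellation of the two ``diagonal'' contributions in the inductive step leaving only the $(M^{\ol{0}}+E_{i,j}|M^{\ol{1}})$ term, and part (2) by iterating part (1). One small correction: in the base case $j=i+1$ the hypothesis $m^{\ol{0}}_{i,l}=0$ for $l<j$ does \emph{not} force $m^{\ol{0}}_{i,i+1}=0$, so the quantum-integer factor there is genuinely $[m^{\ol{0}}_{i,i+1}+1]_v$ rather than $[1]_v$ --- but since the base case is exactly the cited multiplication formula, this slip does not affect the argument.
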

\begin{proof}
   Assume that $1\leq i<j\leq n$. We prove the first formula in (1) by induction on $j-i$. Indeed, if $j=i+1$, then by Lemma \ref{qiso} and the formula in \cite[Proposition 5.2(1)]{DGLW2}, we have
   \begin{align*}
       \genXE_{i,i+1}\cdot M(\boldsymbol{0})=(E_{i,i+1}|O)(\boldsymbol{0})\cdot M(\boldsymbol{0})=v^{\sum_{t>i+1}m_{i,t}}[m^{\ol{0}}_{i,i+1}+1](M^{\ol{0}}+E_{i,i+1}|M^{\ol{1}})(\boldsymbol{0}).
   \end{align*}
Now suppose that $j> i+1$, then by induction and Corollary \ref{rv}, we have
\begin{align*}
    \genXE_{i,j}\cdot M(\boldsymbol{0})
    &=(-\genE_{i,i+1} \genE_{i+1,j}+v^{-1}\genE_{i+1,j} \genE_{i,i+1})\cdot M(\boldsymbol{0})\\
    &=-\genE_{i,i+1} \genE_{i+1,j}\cdot M(\boldsymbol{0})+v^{-1}\genE_{i+1,j} \genE_{i,i+1}\cdot M(\boldsymbol{0})\\
    &=(-1)^{j-i+1}v^{\sum_{t>j}m_{i+1,t}}\genE_{i,i+1}\cdot(M^{\ol{0}}+E_{i+1,j}|M^{\ol{1}})(\boldsymbol{0})\\
    &\quad+v^{\sum_{t>i+1}m_{i,t}-1}[m^{\ol{0}}_{i,i+1}+1]_v\genE_{i+1,j}\cdot 
    (M^{\ol{0}}+E_{i,i+1}|M^{\ol{1}})(\boldsymbol{0})\\
    &=(-1)^{j-i+1}v^{\sum_{t>i+1}m_{i,t}-1}[m^{\ol{0}}_{i,i+1}+1]_v(M^{\ol{0}}+E_{i,i+1}+E_{i+1,j}|M^{\ol{1}})(\boldsymbol{0})\\
    &\quad+(-1)^{j-i+1}v^{\sum_{t>j}m_{i,t}}[m^{\ol{0}}_{i,j}+1]_v(M^{\ol{0}}+E_{i,j}|M^{\ol{1}})(\boldsymbol{0})\\
    &\quad +(-1)^{j-i}v^{\sum_{t>i+1}m_{i,t}-1}[m^{\ol{0}}_{i,i+1}+1]_v(M^{\ol{0}}+E_{i,i+1}+E_{i+1,j}|M^{\ol{1}})(\boldsymbol{0})\\
    &=(-1)^{j-i+1}v^{\sum_{t>j}m_{i,t}}[m^{\ol{0}}_{i,j}+1]_v(M^{\ol{0}}+E_{i,j}|M^{\ol{1}})(\boldsymbol{0})
\end{align*}
as desired. Applying  the formula in \cite[Proposition 5.4]{DGLW2}, the proof for the second equation in part (1) is similar. As for part (2), (1) proves in the case $l=1$. By induction on $l$, we have 
\begin{align*}
    \genXE^{l+1}_{i,j}\cdot M(\boldsymbol{0})
    &=(-1)^{l(j-i+1)}v^{l(\sum_{t>j}m_{i,t})} \prod\limits_{ u=1}^{l}[m_{i,j}^{\ol{0}}+u]_v
\genXE_{i,j}\cdot(M^{\ol{0}}+lE_{i,j}|M^{\ol{1}})(\boldsymbol{0})\\
&=(-1)^{(l+1)(j-i+1)}v^{(l+1)(\sum_{t>j}m_{i,t})} \prod\limits_{ u=1}^{l+1}[m_{i,j}^{\ol{0}}+u]_v
\cdot(M^{\ol{0}}+(l+1)E_{i,j}|M^{\ol{1}})(\boldsymbol{0}),
\end{align*}
where the last equality is due to part (1). By a parallel argument, the remaining formula in part (2) can be verified.
\end{proof}

Then by Lemma \ref{Eij}, we now derive explicit expressions for $\rho_i(A^{\star}_+)$ and ${\genXE}_{A^{\star}_{+}}$ in superalgebra $\Avn$.

\begin{lem}
Assume that $1\leq i \leq n-1$. Then we have
\begin{align*}
(1)\quad 
&\rho_i(A^{\star}_+)=g_i(v) (\sum\limits_{ j=i+1}^{n}{a}^{\ol{0}}_{i,j}E_{i,j}|\sum\limits_{ j=i+1}^{n}{a}^{\ol{1}}_{i,j}E_{i,j})(\boldsymbol{0}),
\\
(2)\quad 
&{\genXE}_{A^{\star}_{+}}=g(v)(\sum\limits_{ i=1}^{n-1}\sum\limits_{ j=i+1}^{n}{a}^{\ol{0}}_{i,j}E_{i,j}|\sum\limits_{ i=1}^{n-1}\sum\limits_{ j=i+1}^{n}{a}^{\ol{1}}_{i,j}E_{i,j})(\boldsymbol{0}),
\end{align*}
where 
\begin{align*}
&g_i(v)=(-1)^{\sum\limits_{ j=i+1}^{n}(j-i+1){a}_{i,j}}v^{\sum\limits_{j=i+1}^{n-1}a_{i,j}(\sum\limits_{u=j+1}^{n}a_{i,u})}\prod_{j=i+1}^{n}[a_{i,j}^{\ol{0}}]^!_v,\\
&g(v)=(-1)^{{\sum\limits_{i=2}^{n-1}\sum\limits_{j=i+1}^{n}a^{\ol{1}}_{i,j}(\sum\limits_{u=1}^{i-1}\sum\limits_{v=u+1}^{j}a^{\ol{1}}_{u,v})}+\sum\limits_{i=1}^{n-1}\sum\limits_{j=i+1}^{n}(j-i+1)a_{i,j}} v^{\sum\limits_{i=1}^{n-1}\sum\limits_{j=i+1}^{n-1}a_{i,j}(\sum\limits_{u=j+1}^{n}a_{i,u})}\prod_{i=1}^{n-1}\prod_{j=i+1}^{n}[a_{i,j}^{\ol{0}}]^!_v.
\end{align*}
\end{lem}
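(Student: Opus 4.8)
The plan is to evaluate both products inside the superalgebra $\Avn$ by letting their factors act successively on $(O|O)(\boldsymbol{0})$ via the multiplication formulas of Lemma~\ref{Eij}, reading the factors from right to left as prescribed in the paragraph preceding the statement. Throughout, I identify the root vectors with their images under $\xi_n$ (Lemma~\ref{qiso}) and use that, by \eqref{Ajr}, the element $(O|O)(\boldsymbol{0})=\sum_{r\ge 1}\sum_{\lambda\in\Lambda(n,r)}[\mathrm{diag}(\lambda)|O]$ is the identity of $\Qvns$; hence a product $X_1\cdots X_m$ of root vectors equals $X_1\bigl(\cdots\bigl(X_m\cdot(O|O)(\boldsymbol{0})\bigr)\cdots\bigr)$, computed one factor at a time and always staying at $\boldsymbol{j}=\boldsymbol{0}$ as in Lemma~\ref{Eij}.

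For part (1), I would fill in the $i$-th row in the order $j=n,n-1,\dots,i+1$, and within the block at column $j$ apply $(\ol{\genXE}_{i,j})^{a^{\ol{1}}_{i,j}}$ first and $(\genXE_{i,j})^{a^{\ol{0}}_{i,j}}$ second. The crucial point is that this order guarantees the hypothesis of Lemma~\ref{Eij} at every step: the current matrix $M$ has $m^{\ol{0}}_{k,l}=m^{\ol{1}}_{k,l}=0$ whenever $k\ge l$, whenever $k>i$, and whenever $k=i$ and $l<j$, since the only entries created so far in row $i$ sit in columns strictly greater than $j$. I then read off three contributions from Lemma~\ref{Eij}(2): the even factors produce $\prod_{j}[a^{\ol{0}}_{i,j}]^!_v$ because the $(i,j)$-entry of $M^{\ol{0}}$ is still $0$ when $(\genXE_{i,j})^{a^{\ol{0}}_{i,j}}$ acts; the $v$-exponents $\sum_{t>j}m_{i,t}=\sum_{t>j}a_{i,t}$ telescope to $\sum_{j=i+1}^{n-1}a_{i,j}\sum_{u=j+1}^{n}a_{i,u}$; and the signs $(-1)^{a^{\ol{0}}_{i,j}(j-i+1)}$ and $(-1)^{a^{\ol{1}}_{i,j}(j-i+1)}$ combine to $(-1)^{\sum_j a_{i,j}(j-i+1)}$. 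Here one checks that $\tilde{m}^{\ol{1}}_{j-2,j}=0$ at each odd step, because in a single row all the $\ol{1}$-entries of $M$ that it sums lie in columns $\le j$ and are therefore still zero; this is exactly why no further sign appears in $g_i(v)$. Assembling these yields the stated formula for $\rho_i(A^{\star}_+)$.

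For part (2), I would compute $\genXE_{A^{\star}_{+}}=\rho_{n-1}(A^{\star}_+)\cdots\rho_{1}(A^{\star}_+)$ by applying the rows in the order $\rho_1,\rho_2,\dots,\rho_{n-1}$, so that when $\rho_i$ acts the rows $1,\dots,i-1$ are already completed. Since the entries created so far are confined to the strict upper triangle, the hypothesis of Lemma~\ref{Eij} still holds at every step. The $v$-exponents and the $(j-i+1)$-signs involve only row $i$ at each stage, so they aggregate exactly as in part (1), producing the $v$-power and the factor $(-1)^{\sum_{i,j}(j-i+1)a_{i,j}}$ of $g(v)$. The one genuinely new feature is the term $\tilde{m}^{\ol{1}}_{j-2,j}$ in the odd sign: with rows $1,\dots,i-1$ filled, I expect to find $\tilde{m}^{\ol{1}}_{j-2,j}=\sum_{k=1}^{i-1}\sum_{l=k+1}^{j}a^{\ol{1}}_{k,l}$, so that the odd factors contribute the cross-row sign $(-1)^{\sum_{1\le i<j\le n}a^{\ol{1}}_{i,j}\left(\sum_{k=1}^{i-1}\sum_{l=k+1}^{j}a^{\ol{1}}_{k,l}\right)}$, which is precisely the first exponent recorded in $g(v)$.

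The main obstacle will be the bookkeeping of this odd sign: one must evaluate $\tilde{m}^{\ol{1}}_{j-2,j}$ against the partially built matrix at the exact moment $(\ol{\genXE}_{i,j})^{a^{\ol{1}}_{i,j}}$ is applied, carefully separating the contributions of the completed rows $k<i$ from the still-empty entries of row $i$ in columns $\le j$, and then reindexing the resulting double sum to match the exponent in $g(v)$. Everything else is a routine telescoping of quantum integers and $v$-powers.
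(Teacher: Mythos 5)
Your proposal is correct and follows essentially the same route as the paper: both evaluate the products factor by factor (right to left) via the multiplication formulas of Lemma~\ref{Eij}, inducting on the column index within a row for part (1) and on the row index for part (2), and both isolate the cross-row contribution of $\tilde{m}^{\ol{1}}_{j-2,j}=\sum_{k=1}^{i-1}\sum_{l=k+1}^{j}a^{\ol{1}}_{k,l}$ as the source of the extra sign in $g(v)$. Your verification that the hypothesis of Lemma~\ref{Eij} is preserved at each step and that $\tilde{m}^{\ol{1}}_{j-2,j}=0$ within a single row matches the paper's (implicit) bookkeeping.
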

\begin{proof}
    By Lemma \ref{Eij}, we see that
    \begin{align*}
&({{\genXE}}_{i, n})^{a^{\ol{0}}_{i,n}}=(-1)^{{a^{\ol{0}}_{i,n}}(n-i+1)} [a_{i,n}^{\ol{0}}]^!_v\cdot
({a^{\ol{0}}_{i,n}}E_{i,n}|O)(\boldsymbol{0}),\\
&(\ol{{\genXE}}_{i, n})^{a^{\ol{1}}_{i,n}}=(-1)^{{a^{\ol{1}}_{i,n}}(n-i+1)} (O|{a^{\ol{1}}_{i,n}}E_{i,n})(\boldsymbol{0}),  
    \end{align*}
 then we have
 \begin{align*}
   ({{\genXE}}_{i, n})^{a^{\ol{0}}_{i,n}} (\ol{{\genXE}}_{i, n})^{a^{\ol{1}}_{i,n}}= (-1)^{{a_{i,n}}(n-i+1)} [a_{i,n}^{\ol{0}}]^!_v({a^{\ol{0}}_{i,n}}E_{i,n}|{a^{\ol{1}}_{i,n}}E_{i,n})(\boldsymbol{0}).
 \end{align*}
 For the fixed $i$-th row, we now prove the first formula by induction on the number of columns $j~(i+1\leq j<n)$. Lemma \ref{Eij} implies
 {\small\begin{align*} &[({{\genXE}}_{i,j})^{{a}^{\ol{0}}_{i,j}} (\ol{{\genXE}}_{i,j}) ^{{a}^{\ol{1}}_{i,j}})]
    \cdots [({{\genXE}} _{i,n}) ^{{a}^{\ol{0}}_{i,n}} ({\ol{\genXE}} _{i,n})^{{a}^{\ol{1}}_{i,n}}]\\
    &\quad=(-1)^{\sum\limits_{ u=j+1}^{n}(u-i+1){a}_{i,u}}v^{\sum\limits_{u=j+1}^{n-1}a_{i,u}(\sum\limits_{t=u+1}^{n}a_{i,t})}\prod_{u=j+1}^{n}[a_{i,u}^{\ol{0}}]^!_v\\
    &\qquad\quad\cdot    ({{\genXE}}_{i,j})^{{a}^{\ol{0}}_{i,j}} (\ol{{\genXE}}_{i,j}) ^{{a}^{\ol{1}}_{i,j}}\cdot
    (\sum\limits_{ u=j+1}^{n}{a}^{\ol{0}}_{i,u}E_{i,u}|\sum\limits_{ u=j+1}^{n}{a}^{\ol{1}}_{i,u}E_{i,u})(\boldsymbol{0})\\
    &\quad=(-1)^{\sum\limits_{ u=j+1}^{n}(u-i+1){a}_{i,u}+(j-i+1){a}^{\ol{1}}_{i,j}}v^{\sum\limits_{u=j+1}^{n-1}a_{i,u}(\sum\limits_{t=u+1}^{n}a_{i,t})+{a}^{\ol{1}}_{i,j}(\sum\limits_{t=j+1}^{n}a_{i,t})}\prod_{u=j+1}^{n}[a_{i,u}^{\ol{0}}]^!_v\\
    &\qquad\quad\cdot ({\genXE}_{i,j})^{{a}^{\ol{0}}_{i,j}}\cdot (\sum\limits_{ u=j+1}^{n}{a}^{\ol{0}}_{i,u} E_{i,u}|\sum\limits_{ u=j}^{n}{a}^{\ol{1}}_{i,u}E_{i,u})(\boldsymbol{0})\\
    &=(-1)^{\sum\limits_{ u=j}^{n}(u-i+1){a}_{i,u}}v^{\sum\limits_{u=j}^{n-1} a_{i,u}(\sum\limits^{n}_{t=u+1}a_{i,t})}\prod_{u=j}^{n}[a_{i,u}^{\ol{0}}]^!_v\cdot (\sum\limits_{ u=j}^{n}{a}^{\ol{0}}_{i,u}E_{i,u}|\sum\limits_{ u=j}^{n}{a}^{\ol{1}}_{i,u}E_{i,u})(\boldsymbol{0}).
 \end{align*}}
 As for the second formula, we prove it by induction on the number of rows $i$ ($1\leq i \leq n-1$). Then applying Lemma \ref{Eij} and by induction, we have
 \begin{align*}
   &\rho_{i}(A^{\star}_{+})\rho_{i-1}(A^{\star}_{+})\cdots \rho_{1}(A^{\star}_{+})\\
   &\qquad=g'_{i-1}(v) \rho_{i}(A^{\star}_{+})\cdot (\sum\limits_{ t=1}^{i-1}\sum\limits_{ j=t+1}^{n}{a}^{\ol{0}}_{t,j}E_{t,j}|\sum\limits_{ t=1}^{i-1}\sum\limits_{ j=t+1}^{n}{a}^{\ol{1}}_{t,j}E_{t,j})(\boldsymbol{0})\\
   &\qquad=g'_{i-1}(v)[({{\genXE}}_{i,i+1})^{{a}^{\ol{0}}_{i,i+1}} (\ol{{\genXE}}_{i,i+1}) ^{{a}^{\ol{1}}_{i,i+1}}]
    \cdots [({{\genXE}} _{i,n}) ^{{a}^{\ol{0}}_{i,n}} ({\ol{\genXE}} _{i,n})^{{a}^{\ol{1}}_{i,n}}]\cdot (\sum\limits_{ t=1}^{i-1}\sum\limits_{ j=t+1}^{n}{a}^{\ol{0}}_{t,j}E_{t,j}|\sum\limits_{ t=1}^{i-1}\sum\limits_{ j=t+1}^{n}{a}^{\ol{1}}_{t,j}E_{t,j})(\boldsymbol{0})\\
    &\qquad=(-1)^{\sum\limits_{j=i+1}^{n}a^{\ol{1}}_{i,j}(\sum\limits_{u=1}^{i-1}\sum\limits_{v=u+1}^{j}a^{\ol{1}}_{u,v})} g'_{i-1}(v) g_i(v)
    \cdot (\sum\limits_{ t=1}^{i}\sum\limits_{ j=t+1}^{n}{a}^{\ol{0}}_{t,j}E_{t,j}|\sum\limits_{ t=1}^{i}\sum\limits_{ j=t+1}^{n}{a}^{\ol{1}}_{t,j}E_{t,j})(\boldsymbol{0})\\
    &\qquad=g'_i(v) (\sum\limits_{ t=1}^{i}\sum\limits_{ j=t+1}^{n}{a}^{\ol{0}}_{t,j}E_{t,j}|\sum\limits_{ t=1}^{i}\sum\limits_{ j=t+1}^{n}{a}^{\ol{1}}_{t,j}E_{t,j})(\boldsymbol{0}),
 \end{align*}
 where 
 \begin{align*}
     &g'_{i-1}(v)=(-1)^{{\sum\limits_{t=2}^{i-1}\sum\limits_{j=t+1}^{n}a^{\ol{1}}_{t,j}(\sum\limits_{u=1}^{t-1}\sum\limits_{v=u+1}^{j}a^{\ol{1}}_{u,v})}+\sum\limits_{t=1}^{i-1}\sum\limits_{j=t+1}^{n}(j-t+1)a_{t,j}} v^{\sum\limits_{t=1}^{i-1}\sum\limits_{j=t+1}^{n-1}a_{t,j}(\sum\limits_{u=j+1}^{n}a_{t,u})}\prod_{t=1}^{i-1}\prod_{j=t+1}^{n}[a_{t,j}^{\ol{0}}]^!_v,\\
     &g_i(v)=(-1)^{\sum\limits_{ j=i+1}^{n}(j-i+1){a}_{i,j}}v^{\sum\limits_{j=i+1}^{n-1}a_{i,j}(\sum\limits_{u=j+1}^{n}a_{i,u})}\prod_{j=i+1}^{n}[a_{i,j}^{\ol{0}}]^!_v,\\
     &g'_i(v)=(-1)^{{\sum\limits_{t=2}^{i}\sum\limits_{j=t+1}^{n}a^{\ol{1}}_{t,j}(\sum\limits_{u=1}^{t-1}\sum\limits_{v=u+1}^{j}a^{\ol{1}}_{u,v})}+\sum\limits_{t=1}^{i}\sum\limits_{j=t+1}^{n}(j-t+1)a_{t,j}} v^{\sum\limits_{t=1}^{i}\sum\limits_{j=t+1}^{n-1}a_{t,j}(\sum\limits_{u=j+1}^{n}a_{t,u})}\prod_{t=1}^{i}\prod_{j=t+1}^{n}[a_{t,j}^{\ol{0}}]^!_v.
 \end{align*}
 This completes the proof of the lemma.
\end{proof}

   As for the image of the products of negative root vectors in $\Avn$, we need the following notations.   
   Recalling the order $\preceq$ from \cite{BLM}: for any $A=(a_{i,j}), B=(b_{i,j})\in M_n(\mathbb{N})$
\begin{equation}\label{bj}
\begin{aligned}
   B\preceq A
   \iff
\begin{cases}
   \sum_{i\le s, j\ge t}b_{i,j}\le \sum_{i\le s, j\ge t}a_{i,j} 
   &\mbox{for all}~ s<t; \\
   \sum_{i\ge s, j\le t}b_{i,j}\le \sum_{i\ge s, j\le t} a_{i,j}
   &\mbox{for all}~ s>t. \\
\end{cases}
\end{aligned}
\end{equation}
   And we say $B\prec A$ if $B\preceq A$ and {$B^{\pm}\ne A^{\pm}$}, where $X^{\pm}$ is the matrix obtained from $X$ with all diagonal entries replaced by $0$.

    In the following results, for $A^{\star}=(A^{\ol{0}}|A^{\ol{1}}) \in M_n(\N|\N_2)^{\pm}$, the ``lower terms" in an expression of the form ``$A(\boldsymbol{j})+\mbox{lower terms}$" means a $\Qv$-linear combination of finitely many elements of the form $B(\boldsymbol{p})$ with $\boldsymbol{p} \in \mathbb{Z}^{n}, B \in M_n(\N|\N_2)^{\pm}$ and $B\prec A$.

Similarly, for the $i$-th column (reading downwards) $a_{i+1,i},  a_{i+2,i},\cdots , a_{n,i}$ of the lower triangular part of the matrix $A^{\star}$, which denotes as $A^{\star}_{-}$, we put 
  \begin{align*}
    &\rho_i(A^{\star}_{-}):=[({{\genXE}}_{i+1, i})^{{a}^{\ol{0}}_{i+1, i}} (\ol{{\genXE}}_{i+1, i})^{{a}^{\ol{1}}_{i+1, i}}]
    \cdots
[({{\genXE}}_{n,i})^{{a}^{\ol{0}}_{n,i}} ({\ol{\genXE}}_{n,i})^{{a}^{\ol{1}}_{n,i}}],\\
   &\rho_i(A^{\star}_{(-)})=[({{\genXE}}_{i+1, i})^{({a}^{\ol{0}}_{i+1, i})} (\ol{{\genXE}}_{i+1, i})^{{a}^{\ol{1}}_{i+1, i}}]
    \cdots
[({{\genXE}}_{n,i})^{({a}^{\ol{0}}_{n,i})} ({\ol{\genXE}}_{n,i})^{{a}^{\ol{1}}_{n,i}}],
  \end{align*}
   and define  
   \begin{align*}
    &{\genXE}_{A^{\star}_-}:=\rho_{1}(A^{\star}_-)\cdots \rho_{n-1}(A^{\star}_-)= \prod_{1\le i\le n-1} \prod_{i+1\le j\le n}({{\genXF}}_{j, i})^{{a}^{\ol{0}}_{j, i}} (\ol{{\genXF}}_{j, i})^{{a}^{\ol{1}}_{j, i}},\\
    &{\genXE}_{A^{\star}_{(-)}}:=\rho_{1}(A^{\star}_{(-)})\cdots \rho_{n-1}(A^{\star}_{(-)})= \prod_{1\le i\le n-1} \prod_{i+1\le j\le 1}({{\genXF}}_{j, i})^{({a}^{\ol{0}}_{j, i})} (\ol{{\genXF}}_{j, i})^{{a}^{\ol{1}}_{j, i}}.
   \end{align*}

    Referring to the multiplication formulas in \cite[Proposition 5.2(2), Proposition 5.5]{DGLW2}, we have the following conclusion.

\begin{lem}\label{Eji}
Let $M=(m_{k,l}^{\ol{0}}|m_{k,l}^{\ol{1}}) \in M_n(\N|\N_2)$, $1\leq i< j \leq n$, $l\in\N, t\in\N_2$.  
Assume that $m_{k,l}^{\ol{0}}=m_{k,l}^{\ol{1}}=0$ for all $k\leq l$ and $l<i$ and $l=i~\&~ k<j$. Then we have
 \begin{align*}
(1)\quad 
& {{\genXF}}_{j,i}\cdot M(\boldsymbol{0})=(-1)^{j-i+1}[m_{j,i}^{\ol{0}}+1]_v(M^{\ol{0}}+E_{j,i}|M^{\ol{1}})(\boldsymbol{0})+\text{ lower terms},\\
&{\ol{{\genXF}}_{j,i}\cdot M(\boldsymbol{0})=(-1)^{j-i+1}(M^{\ol{0}}|M^{\ol{1}}+E_{j,i})(\boldsymbol{0})}+\text{ lower terms};
\\
(2)\quad 
& ({{\genXF}}_{j,i})^{l}\cdot M(\boldsymbol{0})=(-1)^{l(j-i+1)}\prod\limits_{ u=1}^{l}[m_{j,i}^{\ol{0}}+u]_v
(M^{\ol{0}}+l E_{j,i}|M^{\ol{1}})(\boldsymbol{0})+\text{ lower terms},\\
&{(\ol{{\genXF}}_{j,i})^t\cdot M(\boldsymbol{0})=(-1)^{t(j-i+1)}(M^{\ol{0}}|M^{\ol{1}}+tE_{j,i})(\boldsymbol{0})}+\text{ lower terms}.
\end{align*}
\end{lem}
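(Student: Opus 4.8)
The plan is to mirror the proof of Lemma \ref{Eij}, replacing the positive multiplication formulas by their negative counterparts from \cite{DGLW2} and keeping careful track of the error terms that now appear. I would establish the first identity in (1) by induction on $j-i$. For the base case $j=i+1$ one has $\genXF_{i+1,i}=\genF_i$, so by the isomorphism $\xi_n$ of Lemma \ref{qiso} this element corresponds to $(E_{i+1,i}|O)(\boldsymbol{0})$, and the multiplication formula \cite[Proposition 5.2(2)]{DGLW2} gives
\[
\genXF_{i+1,i}\cdot M(\boldsymbol{0})=[m^{\ol{0}}_{i+1,i}+1]_v\,(M^{\ol{0}}+E_{i+1,i}|M^{\ol{1}})(\boldsymbol{0})+\text{ lower terms},
\]
which is the claimed formula since $(-1)^{j-i+1}=1$ here. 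The support hypothesis on $M$ is exactly what guarantees that $m^{\ol{0}}_{i+1,i}$ is the entry being incremented and that the formula applies verbatim.

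For the inductive step I would use the recursion $\genXF_{j,i}=-\genF_{j-1}\genXF_{j-1,i}+v\,\genXF_{j-1,i}\genF_{j-1}$ coming from Proposition \ref{rv} and Corollary \ref{rv_k} (the case $k=j-1$, using $\genXF_{j,j-1}=\genF_{j-1}$). Applying this to $M(\boldsymbol{0})$ and expanding each factor by the inductive hypothesis (for $\genXF_{j-1,i}$, of length $j-1-i$) and by the base case (for $\genF_{j-1}$), the computation runs parallel to Lemma \ref{Eij} term by term: the two contributions that simultaneously increment the pair of entries $(j,j-1)$ and $(j-1,i)$ have leading parts with opposite signs that cancel, while the surviving leading contribution is exactly $(-1)^{j-i+1}[m^{\ol{0}}_{j,i}+1]_v(M^{\ol{0}}+E_{j,i}|M^{\ol{1}})(\boldsymbol{0})$. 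The one structural difference from the positive case is that \cite[Proposition 5.2(2)]{DGLW2} already emits lower terms at every application, so these, together with the lower-term tails left over from the incomplete cancellation, are simply carried along and collected at the end.

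The odd formula in (1) is handled by the same induction, now invoking \cite[Proposition 5.5]{DGLW2} for the action of $\ol{\genXF}_{j,i}$; here the factor $[m^{\ol{0}}_{j,i}+1]_v$ is replaced by $1$ since the odd entries lie in $\N_2$, while the sign $(-1)^{j-i+1}$ is produced as before. Part (2) then follows from part (1) by a further induction on the exponent $l$ (respectively $t$), applying the rank-one formula repeatedly and using that a leading term times a lower term, and a product of two lower terms, is again a sum of lower terms. I expect the main obstacle to be precisely this bookkeeping of the error: one must verify that every correction term — both those emitted by the \cite{DGLW2} formulas and the cross terms generated by the left/right multiplications in the recursion — is genuinely $\prec$-smaller than the leading matrix $M^{\ol{0}}+E_{j,i}$ in the BLM order \eqref{bj}. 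Concretely this reduces to checking that incrementing entries along the recursion never pushes weight outside the region controlled by $\preceq$, so that the class of "lower terms" is stable under each elementary step; once this stability is secured, all of the inductions close routinely.
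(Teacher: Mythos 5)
Your proposal follows essentially the same route as the paper: induction on $j-i$ using the recursion $\genXF_{j,i}=-\genXF_{j,j-1}\genXF_{j-1,i}+v\,\genXF_{j-1,i}\genXF_{j,j-1}$ from Corollary \ref{rv_k}, with the base case supplied by Lemma \ref{qiso} together with the multiplication formulas of \cite[Propositions 5.2(2), 5.5]{DGLW2}, the extra contributions absorbed into ``lower terms'' with respect to the order \eqref{bj}, and part (2) obtained by a further induction on the exponent. The only cosmetic difference is that the paper writes out the odd formula in detail (using the even one as input) whereas you detail the even one, so the argument is the same in substance.
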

\begin{proof}
    Suppose that $1\leq i < j\leq n$. Only the second formula in (1) is proved here by induction on $j-i$; the first formula follows the same lines. Indeed, if $j=i+1$, then by Lemma \ref{qiso} and the formula in \cite[Proposition 5.5]{DGLW2}, we have
    \begin{align*}
        {\ol{\genXE}}_{i+1,i}\cdot M(\boldsymbol{0})=(O|E_{i+1,i})(\boldsymbol{0})\cdot M(\boldsymbol{0})
        =(-1)^{\tilde{m}^{\ol{1}}_{i,i}}(M^{\ol{0}}|M^{\ol{1}}+E_{j,i})(\boldsymbol{0})=(M^{\ol{0}}|M^{\ol{1}}+E_{j,i})(\boldsymbol{0}).
    \end{align*}
    Now assume that $j>i+1$, then associated with \cite[Lemma 6.5]{GLL}, by induction and Corollary \ref{rv_k}, we have 
    \begin{align*}
     {\ol{\genXE}}_{j,i}\cdot M(\boldsymbol{0})
     &=(-{\ol{\genXE}}_{j,j-1} \genXE_{j-1,i} +v \genXE_{j-1,i} {\ol{\genXE}}_{j,j-1}) \cdot M(\boldsymbol{0})\\
     &=-{\ol{\genXE}}_{j,j-1} \genXE_{j-1,i}\cdot M(\boldsymbol{0})+v \genXE_{j-1,i} {\ol{\genXE}}_{j,j-1} \cdot M(\boldsymbol{0})\\
     &=-{\ol{\genXE}}_{j,j-1}[(-1)^{j-i}(M^{\ol{0}}+E_{j-1,i}|M^{\ol{1}})(\boldsymbol{0})+\text{ lower terms}]\\
     &\qquad+v\genXE_{j-1,i}[(-1)^{\tilde{m}^{\ol{1}}_{j-1,i+1}}v^{-m_{j,i}-m^{\ol{1}}_{j-1,i+1}}(M^{\ol{0}}-E_{j-1,i+1}|M^{\ol{1}}+E_{j,i+1})(\boldsymbol{0})\\
      &\qquad\quad (-1)^{\tilde{m}^{\ol{1}}_{j-1,i+1}}v^{-m_{j,i}+m^{\ol{0}}_{j-1,i+1}}[m^{\ol{0}}_{j,i+1}]_v(M^{\ol{0}}+E_{j,i+1}|M^{\ol{1}}-E_{j-1,i+1})(\boldsymbol{0})\\
      &\qquad\quad
     +\text{ lower terms}]\\
     &=(-1)^{\tilde{m}^{\ol{1}}_{j-1,i}+j-i+1}(M^{\ol{0}}|M^{\ol{1}}+E_{j,i})(\boldsymbol{0})\\
     &\qquad
     +(-1)^{\tilde{m}^{\ol{1}}_{j-1,i+1}+j-i}v^{-m_{j,i}-m^{\ol{1}}_{j-1,i+1}+1}(M^{\ol{0}}-E_{j-1,i+1}+E_{j-1,i}|M^{\ol{1}}+E_{j,i+1})(\boldsymbol{0})\\
     &\qquad+(-1)^{\tilde{m}^{\ol{1}}_{j-1,i+1}+j-i}v^{-m_{j,i}+m^{\ol{0}}_{j-1,i+1}+1}[m^{\ol{0}}_{j,i+1}]_v(M^{\ol{0}}+E_{j,i+1}+E_{j-1,i}|M^{\ol{1}}-E_{j-1,i+1})(\boldsymbol{0})\\
     &\qquad
     +\text{ lower terms}\\
     &=(-1)^{j-i+1}(M^{\ol{0}}|M^{\ol{1}}+E_{j,i})(\boldsymbol{0})+\text{ lower terms} 
    \end{align*}
    as desired, where the third and the forth equations are due to the first formula in part (1). As for part (2), (1) proves in the case $l=1$. By induction on $l$, we have 
    \begin{align*}
      ({{\genXF}}_{j,i})^{l+1}\cdot M(\boldsymbol{0})&={\genXF}_{j,i}\cdot
      [(-1)^{l(j-i+1)}\prod\limits_{ u=1}^{l}[m_{j,i}^{\ol{0}}+u]_v (M^{\ol{0}}+l E_{j,i}|M^{\ol{1}})(\boldsymbol{0})+\text{ lower terms}]  \\
      &=(-1)^{(l+1)(j-i+1)}\prod\limits_{ u=1}^{l+1}[m_{j,i}^{\ol{0}}+u]_v
(M^{\ol{0}}+(l+1) E_{j,i}|M^{\ol{1}})(\boldsymbol{0})+\text{ lower terms},
    \end{align*}
  where the last equality is due to part (1).  By a parallel argument, the remaining formula in part (2) can be verified.
\end{proof}

\begin{cor}
Assume that $2\leq j \leq n$. Then we have
\begin{align*}
(1)\quad 
&\rho_i(A^{\star}_-)= f_i(v)(M^{\ol{0}}+\sum\limits_{j=i+1}^{n}E_{j,i}|M^{\ol{1}}+\sum\limits_{j=i+1}^{n}E_{j,i})(\boldsymbol{0})+\text{ lower terms},
\\
(2)\quad 
&{\genXE}_{A^{\star}_{-}}= f(v)(M^{\ol{0}}+\sum\limits_{i=1}^{n-1}\sum\limits_{j=i+1}^{n}E_{j,i}|M^{\ol{1}}+\sum\limits_{i=1}^{n-1}\sum\limits_{j=i+1}^{n}E_{j,i})(\boldsymbol{0})+\text{ lower terms},
\end{align*}
where
\begin{align*}
    &f_i(v)=(-1)^{\sum\limits_{j=i+1}^{n} (j-i+1)a_{j,i}}\prod\limits_{j=i+1}^{n}[a_{j,i}^{\ol{0}}]_v !,\\
    &f(v)=(-1)^{\sum\limits_{i=1}^{n-1}\sum\limits_{j=i+1}^{n} (j-i+1)a_{j,i}}\prod\limits_{i=1}^{n-1}\prod\limits_{j=i+1}^{n}[a_{j,i}^{\ol{0}}]_v !.
\end{align*}
\end{cor}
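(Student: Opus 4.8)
The plan is to mirror the inductive computation carried out for $\rho_i(A^\star_+)$ and ${\genXE}_{A^\star_+}$ in the preceding lemma, replacing the role of Lemma \ref{Eij} by its negative-root-vector counterpart Lemma \ref{Eji} and carrying the ``lower terms'' remainder along at every step. For part (1) I would first read off the bottom entry of the $i$-th column by applying Lemma \ref{Eji}(2), which contributes $(-1)^{(n-i+1)a_{n,i}}[a^{\ol{0}}_{n,i}]^!_v$ together with the placement of $a^{\ol{0}}_{n,i}E_{n,i}$ and $a^{\ol{1}}_{n,i}E_{n,i}$ in the leading matrix, modulo lower terms. I would then induct downward on the row index $j$ from $n$ to $i+1$. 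At the generic step the accumulated leading matrix has its column-$i$ support confined to rows strictly below $j$, so the hypotheses of Lemma \ref{Eji} (all entries on or above the diagonal vanish, all columns to the left of $i$ vanish, and column $i$ is zero above row $j$) remain in force; this is exactly what forces the sign prefactor to collapse to $(-1)^{j-i+1}$ and, crucially, what makes the $\tilde m^{\ol{1}}$-dependent sign appearing in the odd formula vanish. Multiplying the contributions $(-1)^{(j-i+1)a_{j,i}}[a^{\ol{0}}_{j,i}]^!_v$ across $j=i+1,\dots,n$ then assembles the factor $f_i(v)$, with no $v$-power appearing because Lemma \ref{Eji} carries none.

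For part (2) I would induct on the column index via the tail products $\rho_i(A^\star_-)\cdots\rho_{n-1}(A^\star_-)$, invoking part (1) at each stage. The key geometric fact is that the leading matrix produced by $\rho_i(A^\star_-)$ has support in column $i$ only (below the diagonal), while the accumulated matrix from columns $i+1,\dots,n-1$ lives strictly to the right of column $i$; hence the hypotheses of Lemma \ref{Eji} are again satisfied and $\rho_i(A^\star_-)$ merely augments the leading matrix by $\sum_{j>i}E_{j,i}$ without disturbing the entries already placed. Since the odd multiplication formula in Lemma \ref{Eji} carries only the sign $(-1)^{j-i+1}$ and no reordering sign of $\tilde m^{\ol{1}}$-type, the leading coefficients simply multiply, yielding $f(v)=\prod_{i=1}^{n-1}f_i(v)$; this is precisely why $f(v)$ has neither the odd cross-sign nor the $v$-power that decorate the positive-side $g(v)$.

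The main obstacle I anticipate is not the leading-term bookkeeping but the verification that the error terms remain genuinely ``lower'' throughout both inductions. Concretely, one must check that whenever a term $B(\boldsymbol{p})$ with $B\prec A$ is multiplied by a factor ${\genXF}_{j,i}$ or $\ol{\genXF}_{j,i}$, every matrix occurring in the resulting product is again $\prec A+E_{j,i}$ for the order \eqref{bj} of \cite{BLM}. This is the product-level analogue of the single-step statements packaged in Lemma \ref{Eji}, and I expect it to follow from the order-compatibility of the multiplication formulas of \cite[Proposition 5.2(2), Proposition 5.5]{DGLW2} together with \cite[Lemma 6.5]{GLL}. Once this stability is secured, the two inductions close exactly as for the positive case, and the remaining routine products of signs and quantum factorials deliver $f_i(v)$ and $f(v)$.
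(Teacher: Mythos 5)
Your proposal is correct and follows exactly the route the paper intends: the corollary is stated as an immediate consequence of Lemma \ref{Eji}, proved by the same two-level induction (downward on the row index within the $i$-th column, then on the column index via the tail products $\rho_i(A^{\star}_-)\cdots\rho_{n-1}(A^{\star}_-)$) that the paper carries out explicitly for the positive-root-vector case, with the lower-term remainder tracked at each step. Your observations that the support hypotheses of Lemma \ref{Eji} are preserved at every stage, and that the absence of $v$-powers and of the $\tilde m^{\ol{1}}$-type cross-signs in Lemma \ref{Eji} is what makes $f(v)=\prod_i f_i(v)$ collapse so cleanly, match the paper's bookkeeping.
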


Hence, we have previously discussed the upper and lower triangular parts; now, for the diagonal part, the proof of \cite[Proposition 8.2(1)]{DW} show that  $\{{\genK}_{A^{\star}_0, \tau}~|~A^{\star}\in M_n(\N|\N_2)^{\pm},\tau\in \N_2^n\}$ spans ${{\boldsymbol U}^{0}_{v,\mc Z}}$. Then associated with the triangular decomposition 
       $$
   \qUZ \cong {{\boldsymbol U}^{-}_{v,\mc Z}} \otimes {{\boldsymbol U}^{0}_{v,\mc Z}} \otimes {{\boldsymbol U}^{+}_{v,\mc Z}}.
    $$
    by \cite[Proposition 8.2(2)]{DW}, we have proved the following theorem.
\begin{thm}\label{basis}
The following hold in $\qUZ$:
   \begin{enumerate}
       \item The set $\{ \genXE_{A^{\star}_{(+)}}~|~A^{\star}\in M_n(\N|\N_2)^{\pm}\}$ forms a $\mcZ$-basis of ${{\boldsymbol U}^{+}_{v,\mc Z}}$;
       \item the set $\{ \genXE_{A^{\star}_{(-)}}~|~A^{\star}\in M_n(\N|\N_2)^{\pm}\}$ forms a $\mcZ$-basis of ${{\boldsymbol U}^{-}_{v,\mc Z}}$;
       \item the set $\{\genXE_{A^{\star}_{(-)}}\cdot{\genK}_{A^{\star}_0, \tau}\cdot \genXE_{A^{\star}_{(+)}}~|~ A^{\star}\in M_n(\N|\N_2)^{\pm},\tau\in \N_2^n \}$ form a $\mcZ$-basis of $\qUZ$.
   \end{enumerate} 
\end{thm}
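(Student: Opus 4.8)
The plan is to push everything through the faithful realization $\xi_n\colon\Uvqn\xrightarrow{\sim}\Avn$ of Lemma \ref{qiso}, where $\{A^{\star}(\boldsymbol j)\}$ is already a $\Qv$-basis by Lemma \ref{DGLW2}, and then read off all three statements from the triangular decomposition of \cite[Proposition 8.2]{DW}. Throughout, $\genXE_{A^{\star}_{(+)}}$ (resp. $\genXE_{A^{\star}_{(-)}}$) depends only on the strictly upper- (resp. lower-) triangular part of $A^{\star}$, and $\genK_{A^{\star}_0,\tau}$ only on its diagonal, so a general $A^{\star}\in M_n(\N|\N_2)^{\pm}$ contributes its three parts to the three factors of the monomial in $(3)$.

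I would begin with the positive part $(1)$. By definition $\genXE_{A^{\star}_{(+)}}$ is obtained from $\genXE_{A^{\star}_+}$ by replacing each even power $(\genXE_{i,j})^{a^{\ol 0}_{i,j}}$ with its divided power, that is, by dividing the whole product by $\prod_{i<j}[a^{\ol 0}_{i,j}]^!_v$. The preceding lemma gives $\xi_n(\genXE_{A^{\star}_+})=g(v)\,A^{\star}(\boldsymbol 0)$, and the prefactor $g(v)$ is the product of a sign, a power of $v$, and exactly $\prod_{i<j}[a^{\ol 0}_{i,j}]^!_v$; hence these factorials cancel and $\xi_n(\genXE_{A^{\star}_{(+)}})=\varepsilon_A\,A^{\star}(\boldsymbol 0)$ with $\varepsilon_A\in\mcZ^{\times}$. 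Since distinct upper-triangular $A^{\star}$ give distinct members of the basis of Lemma \ref{DGLW2}, the family $\{\genXE_{A^{\star}_{(+)}}\}$ is $\mcZ$-linearly independent. For spanning, let $L$ be the $\mcZ$-span of the upper-triangular matrix elements $A^{\star}(\boldsymbol 0)$. The images $\xi_n(\genE_j^{(m)})$, $\xi_n(\genE_{\bj})$ of the generators of ${{\boldsymbol U}^{+}_{v,\mc Z}}$ lie in $L$ (each is a single matrix element up to a unit, by Lemma \ref{Eij}), and the multiplication formulas of \cite{DGLW2} keep $\xi_n({{\boldsymbol U}^{+}_{v,\mc Z}})$ inside $L$; as the $\varepsilon_A A^{\star}(\boldsymbol 0)$ already form a $\mcZ$-basis of $L$ and each $\genXE_{A^{\star}_{(+)}}$ lies in ${{\boldsymbol U}^{+}_{v,\mc Z}}$, injectivity of $\xi_n$ forces $\xi_n({{\boldsymbol U}^{+}_{v,\mc Z}})=L$ and establishes $(1)$.

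The negative part $(2)$ follows the same template, the single difference being that the corresponding corollary yields $\xi_n(\genXE_{A^{\star}_{-}})=f(v)\,A^{\star}(\boldsymbol 0)+\text{lower terms}$, so that after cancelling the factorials in $f(v)$ one obtains $\xi_n(\genXE_{A^{\star}_{(-)}})=\varepsilon'_A\,A^{\star}(\boldsymbol 0)+\text{lower terms}$ with $\varepsilon'_A\in\mcZ^{\times}$ and the lower terms a $\mcZ$-combination of $B(\boldsymbol p)$ with $B\prec A$ in the BLM order \eqref{bj}. This expresses the $\genXE_{A^{\star}_{(-)}}$ in terms of the relevant matrix basis by a transition that is unitriangular, up to the units $\varepsilon'_A$, with respect to $\prec$, hence invertible over $\mcZ$; the usual BLM triangularity argument then yields both $\mcZ$-independence and the fact that they span ${{\boldsymbol U}^{-}_{v,\mc Z}}$.

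Finally I would assemble $(3)$ from the triangular decomposition $\qUZ\cong{{\boldsymbol U}^{-}_{v,\mc Z}}\otimes{{\boldsymbol U}^{0}_{v,\mc Z}}\otimes{{\boldsymbol U}^{+}_{v,\mc Z}}$ of \cite[Proposition 8.2(2)]{DW}: parts $(1)$ and $(2)$ supply $\mcZ$-bases of the two outer factors, while $\{\genK_{A^{\star}_0,\tau}\}$ is a $\mcZ$-basis of the middle factor by \cite[Proposition 8.2(1)]{DW}, so the products $\genXE_{A^{\star}_{(-)}}\,\genK_{A^{\star}_0,\tau}\,\genXE_{A^{\star}_{(+)}}$ form a $\mcZ$-basis of the tensor product. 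I expect the genuine difficulty to lie in the spanning rather than the independence: independence is immediate once each monomial is pushed, via the faithful $\xi_n$, onto a leading basis vector of $\Avn$ with a unit coefficient, whereas spanning requires that the $\mcZ$-span of the ordered monomials be stable under left multiplication by the algebra generators, which for the negative part is exactly where the ``lower terms'' and the compatibility of $\prec$ with the commutation formulas of Section \ref{root vector} must be controlled.
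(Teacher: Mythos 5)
Your proposal is correct and follows essentially the same route as the paper: the paper's proof consists precisely of computing, via the realization $\xi_n$, that $\xi_n(\genXE_{A^{\star}_{+}})=g(v)\,A^{\star}(\boldsymbol 0)$ and $\xi_n(\genXE_{A^{\star}_{-}})=f(v)(\cdots)(\boldsymbol 0)+\text{lower terms}$ with the factorials in $g(v),f(v)$ cancelling against the divided powers to leave units, so that the ordered monomials hit the $\Avn$-basis of Lemma \ref{DGLW2} unitriangularly with respect to $\prec$, and then invoking \cite[Proposition 8.2]{DW} for the Cartan part and the triangular decomposition. Your only deviation is sketching the spanning of ${{\boldsymbol U}^{\pm}_{v,\mc Z}}$ directly from stability of the $\mcZ$-lattice under the multiplication formulas, where the paper instead cites the spanning statement of \cite[Proposition 8.2(2)]{DW}; both are fine.
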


 Moreover, by Lemma \ref{DGLW2}, in superalgebra $\Avn$, we show that
 \begin{equation}\label{Aj}
 \begin{aligned}
\genXE_{A^{\star}_{-}}\cdot{\genK}_{A^{\star}_0}\cdot\prod_{i=1}^n\genK_i^{\sigma_i}\cdot \genXE_{A^{\star}_{+}}= tA^{\star}(\boldsymbol{j})+\sum\limits_{B^{\star}\prec A^{\star}\atop\boldsymbol{j'}\in \Z^n} t_{B^{\star},\boldsymbol{j'}}B^{\star}(\boldsymbol{j'}),
 \end{aligned}
  \end{equation}
where $A^{\star}\in M_n(\N|\N_2)^{\pm}$, $\boldsymbol{j}\in \Z^n$, $\sigma_i \in \N$, $t,t_{B^{\star},\boldsymbol{j'}}\in \Qv$, $t\neq 0$.

Hence, following from \cite[Definition-Proposition 9.2.1]{CP}, we know that $\qUZ$ is a non-restricted integral form of $\Uvqn$. 

 \section{The homomorphism between $\qUZ$ and ${\mathcal{Q}}^{s}_{v,\mcZ}(n,r)$} \label{surjective}
 
In this section, our aim is to discuss that the superalgebra homomorphism between $\qUZ$ and ${\mathcal{Q}}^{s}_{v,\mcZ}(n,r)$ is surjective.

Following \cite[Theorem 6.3]{DGLW2}, there is a superalgebra homomorphis between the quantum queer superalgebra $\Uvqn$ and the standardised queer $v$-Schur superalgebra $\Qvnr$ given by
\begin{align*}
   & \boldsymbol{\xi}_{n,r}:\Uvqn\to \Qvnr \\
   &\genE_j\mapsto (E_{j,j+1}|O)(\boldsymbol{0},r),\quad \genF_j \mapsto (E_{j+1,j}|O)(\boldsymbol{0},r),\quad \genK_i^{\pm 1}\mapsto (O|O)(\pm\boldsymbol{\ep_i},r),\\
   &\genE_{\ol{j}}\mapsto (O|E_{j,j+1})(\boldsymbol{0},r),\quad \genF_{\ol{j}} \mapsto (O|E_{j+1,j})(\boldsymbol{0},r),\quad \genK_{\ol{i}}\mapsto (O|E_{i,i})(\boldsymbol{0},r).
\end{align*}

Denote the function $\N$ valued $\lVert\cdot \lVert$ in $M_n(\N|\N_2)$ by setting, for any $A^{\star}=(a^{\ol{0}}_{i,j}|a^{\ol{1}}_{i,j})\in M_n(\N|\N_2)$,
\begin{align*}
\lVert A^{\star} \lVert=\sum\limits_{1\leq i< j\leq n} \frac{(j-i)(j-i+1)}{2} (a^{\ol{0}}_{i,j}+a^{\ol{1}}_{i,j}+a^{\ol{0}}_{j,i}+a^{\ol{1}}_{j,i}).  
\end{align*}

Let ${\xi}_{n,r}$ be the restriction of the map $\boldsymbol{\xi}_{n,r}$ to the $\mcZ$-subsuperalgebra $\qUZ$ of $\Uvqn$. We now consider the image of $\begin{bmatrix}
\genK_i\\t
\end{bmatrix},~ \genE_j^{(m)},~ \genF^{(m)}_j$ with
 $1\leq i\leq n, 1\leq j\leq n-1,t,m\in\N$.
 
\begin{lem}\label{image}
    For any $1\leq i \leq n, 1\leq j\leq n-1$, $t,m\in \N$, we have 
    \begin{align*}
&(1)~{\xi}_{n,r}(\begin{bmatrix}
\genK_i\\t
\end{bmatrix}_v )=\sum\limits_{\lambda\in \Lambda(n,r)} \begin{bmatrix}
\lambda_i\\t
\end{bmatrix}_v [\lambda|O],\\
&(2)~{\xi}_{n,r}(\genE_j^{(m)})=\sum\limits_{\lambda\in \Lambda(n,r-m)}[\lambda+mE_{j,j+1}|O],\\
&(3)~{\xi}_{n,r}(\genF_j^{(m)})=\sum\limits_{\lambda\in \Lambda(n,r-m)}[\lambda+mE_{j+1,j}|O].
    \end{align*}
\end{lem}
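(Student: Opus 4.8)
The plan is to reduce every computation to the multiplication formulas of Lemmas~\ref{Eij} and~\ref{Eji} together with the projection \eqref{Ajr}. Since $\xi_{n,r}$ is the restriction of the superalgebra homomorphism $\boldsymbol{\xi}_{n,r}$, and $\boldsymbol{\xi}_{n,r}$ agrees on generators with $\pi_r\circ\xi_n$ (where $\xi_n$ is the isomorphism of Lemma~\ref{qiso} and $\pi_r\colon\Avn\to\Qvnr$, $A^{\star}(\boldsymbol{j})\mapsto A^{\star}(\boldsymbol{j},r)$, is the algebra projection onto the $r$-graded piece), the two homomorphisms coincide. Hence I may compute each image inside $\Avn$ and then apply $\pi_r$ via \eqref{Ajr}, using throughout that $O(\boldsymbol{0})=\sum_{\lambda}[\lambda|O]$ is the identity, so that $(\genXE_{i,j})^{l}=(\genXE_{i,j})^{l}\cdot O(\boldsymbol{0})$ is directly governed by Lemmas~\ref{Eij} and~\ref{Eji}.

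For part~(1), by \eqref{Kt} one has $\begin{bmatrix}\genK_i\\t\end{bmatrix}_v=\prod_{s=1}^{t}\frac{\genK_i v^{1-s}-\genK_i^{-1}v^{s-1}}{v^s-v^{-s}}$, a Laurent polynomial in $\genK_i^{\pm1}$. Applying the homomorphism $\xi_{n,r}$ and inserting $\xi_{n,r}(\genK_i^{\pm1})=(O|O)(\pm\boldsymbol{\ep_i},r)=\sum_{\lambda\in\Lambda(n,r)}v^{\pm\lambda_i}[\lambda|O]$, the computation reduces to a blockwise scalar substitution, because the elements $\{[\lambda|O]\mid\lambda\in\Lambda(n,r)\}$ are pairwise orthogonal idempotents with sum $1$ (they are the identity morphisms of the summands $x_{\lambda}\mathcal{H}^c_{r,\mcZ}$ in the endomorphism superalgebra defining $\Qvnr$). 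Thus each factor acts on the $\lambda$-block by $\genK_i\mapsto v^{\lambda_i}$, and $\prod_{s=1}^{t}\frac{v^{\lambda_i-s+1}-v^{-\lambda_i+s-1}}{v^s-v^{-s}}=\begin{bmatrix}\lambda_i\\t\end{bmatrix}_v$ gives (1).

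For part~(2), write $\genE_j^{(m)}=\genE_j^{m}/[m]^!_v$ with $\genE_j=\genXE_{j,j+1}$. Lemma~\ref{Eij}(2) with $M=O$ (so the $v$-exponent is $0$, the sign is $(-1)^{2m}=1$, and $\prod_{u=1}^{m}[u]_v=[m]^!_v$) gives $\xi_n(\genE_j)^{m}=(\genXE_{j,j+1})^{m}=[m]^!_v\,(mE_{j,j+1}|O)(\boldsymbol{0})$. Dividing by $[m]^!_v$ and projecting by \eqref{Ajr}, namely $(mE_{j,j+1}|O)(\boldsymbol{0},r)=\sum_{\lambda\in\Lambda(n,r-m)}[\lambda+mE_{j,j+1}|O]$, yields (2). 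The same computation for $\genF_j=\genXF_{j+1,j}$ using Lemma~\ref{Eji}(2) produces the leading term $[m]^!_v\,(mE_{j+1,j}|O)(\boldsymbol{0})$, and, after dividing and projecting, the expected answer in (3).

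The one genuine obstacle is the ``lower terms'' recorded in Lemma~\ref{Eji}: unlike the positive case, the negative multiplication formula is only triangular, so to obtain the clean formula (3) I must check that all lower terms vanish when $M=O$. I expect this to be the crux, and I would settle it by an induction on $m$ parallel to the proof of Lemma~\ref{Eji}(2): since $mE_{j+1,j}$ has a single strictly lower-triangular entry and one starts from the identity $O(\boldsymbol{0})$, there is no matrix $B^{\star}\prec mE_{j+1,j}$ admissible in the order \eqref{bj}, forcing every correction coefficient to be $0$. Alternatively, one could deduce (3) from (2) via the anti-involution $\Omega$ of \eqref{omega}, which satisfies $\Omega(\genE_j^{(m)})=\genF_j^{(m)}$ because $\Omega(v)=v^{-1}$ fixes $[m]^!_v$, provided a compatible anti-automorphism on $\Qvnr$ is available. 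Granting the vanishing of the lower terms, projection by \eqref{Ajr} completes (3).
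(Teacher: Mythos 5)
Your proposal is correct and follows essentially the same route as the paper: part (1) is the same blockwise substitution using that the $[\lambda|O]$ are orthogonal idempotents summing to $1$, and parts (2)--(3) reduce, exactly as in the paper, to the multiplication formulas of \cite{DGLW2} together with \eqref{Ajr} (the paper runs the induction on $m$ directly in $\Qvnr$ via \cite[Proposition 5.2]{DGLW2} rather than quoting Lemma \ref{Eij}, but that lemma is proved from the same formulas, so the difference is cosmetic).

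One caveat on the step you rightly identify as the crux of (3): your stated reason for the vanishing of the lower terms --- that ``there is no matrix $B^{\star}\prec mE_{j+1,j}$ admissible in the order \eqref{bj}'' --- is not literally true, since for example $(m-1)E_{j+1,j}$ and $O$ both precede $mE_{j+1,j}$ in that order. What actually kills the lower terms is either (a) weight homogeneity: the only lower-triangular matrix with nonnegative entries whose weight equals $m(\boldsymbol{\ep}_{j+1}-\boldsymbol{\ep}_j)$ is $mE_{j+1,j}$ itself, so no $B^{\star}\prec mE_{j+1,j}$ of the correct weight exists; or (b) more directly, the multiplication formula \cite[Proposition 5.2(2)]{DGLW2} applied to $[\lambda+(m-1)E_{j+1,j}|O]$ produces only the single term in which a unit moves from the diagonal entry $(j,j)$ to $(j+1,j)$, because row $j$ of that matrix is supported only on the diagonal --- this is precisely the ``parallel argument'' the paper invokes. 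Either repair makes your proof complete; the $\Omega$-based alternative would additionally require constructing the compatible anti-automorphism of $\Qvnr$, which the paper does not supply.
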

\begin{proof}
   Suppose $t\in\N$. By \eqref{Ajr} and the definition of $\boldsymbol{\xi}_{n,r}$,  we have $\boldsymbol{\xi}_{n,r}(\genK_i^{\pm 1})=\sum\limits_{\lambda\in \Lambda(n,r)}v^{\pm\lambda_i}[\lambda|O]$. With the notation introduced in \eqref{Kt} and the argument afterwards,
   \begin{align*}
{\xi}_{n,r}(\begin{bmatrix}
\genK_i\\t
\end{bmatrix}_v )
&={\xi}_{n,r}(\prod\limits_{j=1}^t \frac{\genK_iv^{1-j}-\genK_i^{-1}v^{-1+j}}{v^j-v^{-j}})
\\&=\prod\limits_{j=1}^{t}\frac{\sum\limits_{\lambda\in \Lambda(n,r)}v^{\lambda_i}[\lambda|O]\cdot v^{1-j}-\sum\limits_{\lambda\in \Lambda(n,r)}v^{-\lambda_i}[\lambda|O]\cdot v^{-1+j}}{v^j-v^{-j}}
\\&=\prod\limits_{j=1}^{t} \sum\limits_{\lambda\in \Lambda(n,r)} \frac{v^{\lambda_i+1-j}-v^{-\lambda_i-1+j}}{v^j-v^{-j}}[\lambda|O]
\\&=\sum\limits_{\lambda\in \Lambda(n,r)}\prod\limits_{j=1}^{t}\frac{v^{\lambda_i+1-j}-v^{-\lambda_i-1+j}}{v^j-v^{-j}}[\lambda|O]
\\&=\sum\limits_{\lambda\in \Lambda(n,r)}\begin{bmatrix}
\lambda_i\\t
\end{bmatrix}_v [\lambda|O].
   \end{align*}
  Also, we prove the second formula in the Lemma by induction on $m$. Indeed, if $m=2$, then
\begin{align*}
\xi_{n,r}(\genE_j^{(2)})= \frac{(E_{j,j+1}|O)(\boldsymbol{0},r)\cdot(E_{j,j+1}|O)(\boldsymbol{0},r)}{[2]_v !}=(2E_{j,j+1}|O)(\boldsymbol{0},r)
=\sum\limits_{\lambda\in \Lambda(n,r-2)}[\lambda+2E_{j,j+1}|O]
\end{align*}
by \cite[Proposition 5.2(1)]{DGLW2} and \eqref{Ajr}. Now assume $m>2$, then by induction and \eqref{Ajr}, we have
\begin{align*}
    \xi_{n,r}(\genE_j^{(m)})
    &=\xi_{n,r}(\frac{\genE_j\cdot\genE_j^{(m-1)}}{[m]_v})\\
    &=\frac{(E_{j,j+1}|O)(\boldsymbol{0},r)}{[m]_v}\cdot \sum\limits_{\lambda\in \Lambda(n,r-m+1)}[\lambda+(m-1) E_{j,j+1}|O]\\
    &=\frac{(E_{j,j+1}|O)(\boldsymbol{0},r)\cdot ((m-1)E_{j,j+1}|O)(\boldsymbol{0},r)}{[m]_v}\\
    &=(mE_{j,j+1}|O)(\boldsymbol{0},r)\\
    &=\sum\limits_{\lambda\in \Lambda(n,r-m)}[\lambda+mE_{j,j+1}|O]
\end{align*}
as desired. By a parallel argument, the remaining formula can be proved.
\end{proof}

\begin{cor}\label{Klambda}
    For any $\lambda=(\lambda_1,\cdots,\lambda_n)\in \Lambda(n,r)$, the following holds
    \begin{align*}
\xi_{n,r}(\prod\limits_{i=1}^{n}\begin{bmatrix}
\genK_i\\ \lambda_i
\end{bmatrix}_v ) =[\lambda|O].
    \end{align*}
\end{cor}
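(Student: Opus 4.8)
The plan is to reduce everything to Lemma \ref{image}(1) together with the multiplicativity of $\xi_{n,r}$ and the idempotent structure of the diagonal basis elements. Since $\xi_{n,r}$ is a superalgebra homomorphism and each $\begin{bmatrix}\genK_i\\\lambda_i\end{bmatrix}_v$ lies in the commutative subalgebra ${{\boldsymbol U}^{0}_{v,\mc Z}}$ (so the product is order-independent), I first write $\xi_{n,r}\bigl(\prod_{i=1}^n\begin{bmatrix}\genK_i\\\lambda_i\end{bmatrix}_v\bigr)=\prod_{i=1}^n\xi_{n,r}\bigl(\begin{bmatrix}\genK_i\\\lambda_i\end{bmatrix}_v\bigr)$ and substitute the formula from Lemma \ref{image}(1) into each factor, obtaining a product of sums $\prod_{i=1}^n\bigl(\sum_{\mu\in\Lambda(n,r)}\begin{bmatrix}\mu_i\\\lambda_i\end{bmatrix}_v[\mu|O]\bigr)$.

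Second I would invoke the fact that the diagonal elements $[\mu|O]$, being the images of the identity endomorphisms of the summands $x_\mu\mathcal H^c_{r,\mcZ}$, are mutually orthogonal idempotents in $\Qvnr$, that is, $[\mu|O][\nu|O]=\delta_{\mu,\nu}[\mu|O]$; this is built into the definition of the queer $v$-Schur superalgebra following \cite{DGLW2}. Expanding the product over $i$ and applying orthogonality kills every cross term, so only the diagonal choices survive and the expression collapses to $\sum_{\mu\in\Lambda(n,r)}\bigl(\prod_{i=1}^n\begin{bmatrix}\mu_i\\\lambda_i\end{bmatrix}_v\bigr)[\mu|O]$.

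The last step is to evaluate the scalar $\prod_{i=1}^n\begin{bmatrix}\mu_i\\\lambda_i\end{bmatrix}_v$ for $\mu,\lambda\in\Lambda(n,r)$. Here I use two elementary properties of the Gaussian binomial: $\begin{bmatrix}c\\c\end{bmatrix}_v=1$ and $\begin{bmatrix}c\\m\end{bmatrix}_v=0$ whenever $0\le c<m$ (the numerator $[c]_v[c-1]_v\cdots[c-m+1]_v$ then contains the factor $[0]_v=0$). If $\mu=\lambda$ every factor equals $1$, so the scalar is $1$; if $\mu\neq\lambda$, then since $\sum_i\mu_i=\sum_i\lambda_i=r$ there must be an index $i_0$ with $\mu_{i_0}<\lambda_{i_0}$, forcing $\begin{bmatrix}\mu_{i_0}\\\lambda_{i_0}\end{bmatrix}_v=0$ and hence the whole scalar to vanish. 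Thus $\prod_{i=1}^n\begin{bmatrix}\mu_i\\\lambda_i\end{bmatrix}_v=\delta_{\mu,\lambda}$, and the sum reduces to $[\lambda|O]$, as claimed.

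I do not anticipate a serious obstacle: the only non-formal input is the orthogonality of the $[\mu|O]$, which is structural and inherited from \cite{DGLW2}, while everything else is bookkeeping with Gaussian binomials together with the weight constraint $|\mu|=|\lambda|=r$ that pins $\mu$ to $\lambda$.
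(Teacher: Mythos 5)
Your proof is correct and is essentially the argument the paper intends: the corollary is stated as an immediate consequence of Lemma \ref{image}(1), obtained exactly as you do by multiplying the images of the factors, using the orthogonality $[\mu|O]\cdot[\nu|O]=\delta_{\mu,\nu}[\mu|O]$ of the diagonal idempotents, and observing that $\prod_{i=1}^{n}\begin{bmatrix}\mu_i\\ \lambda_i\end{bmatrix}_v=\delta_{\mu,\lambda}$ for $\mu,\lambda$ of the same total weight $r$. The only cosmetic point is that your vanishing step uses $[0]_v=0$, which is what the defining formula $[m]_v=\frac{v^m-v^{-m}}{v-v^{-1}}$ actually gives and what the statement requires, notwithstanding the paper's (evidently misprinted) convention $[0]_v=1$.
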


We now determine the image $\xi_{n,r}(\qUZ)$.
\begin{thm}
    We have $\xi_{n,r}(\qUZ)={\mathcal{Q}}^{s}_{v,\mcZ}(n,r)$.
\end{thm}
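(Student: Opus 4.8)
The plan is to prove the two inclusions separately. For the containment $\xi_{n,r}(\qUZ)\subseteq {\mathcal{Q}}^{s}_{v,\mcZ}(n,r)$ it suffices to check that $\xi_{n,r}$ carries each $\mcZ$-algebra generator of $\qUZ$ into ${\mathcal{Q}}^{s}_{v,\mcZ}(n,r)$: this is precisely the content of Lemma \ref{image} and Corollary \ref{Klambda} for the divided powers $\genE_j^{(m)}$, $\genF_j^{(m)}$ and the Gaussian binomials $\begin{bmatrix}\genK_i\\t\end{bmatrix}_v$, while $\genK_i^{\pm1}$, $\genK_{\ol i}$, $\genE_{\ol j}$, $\genF_{\ol j}$ visibly map to the monomial elements $(\cdots)(\boldsymbol 0,r)$ by the definition of $\boldsymbol\xi_{n,r}$. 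Hence the image is an $\mcZ$-subsuperalgebra of the target, and the real work lies in the reverse inclusion.

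For $\supseteq$, I would show that every defining monomial element $[A^{\ol 0}|A^{\ol 1}]$ of ${\mathcal{Q}}^{s}_{v,\mcZ}(n,r)$ (appearing in \eqref{Ajr}) lies in the image, using the PBW-type basis of $\qUZ$ from Theorem \ref{basis}(3). Given $A^\star\in M_n(\N|\N_2)^{\pm}$ and $\tau\in\N_2^n$, I would apply $\xi_{n,r}$ to the divided-power monomial $\genXE_{A^\star_{(-)}}\cdot\genK_{A^\star_0,\tau}\cdot\genXE_{A^\star_{(+)}}$ and evaluate it through the realization isomorphism $\xi_n$ of Lemma \ref{qiso}. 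The preceding lemmas compute the ordinary-power products $\genXE_{A^\star_{\pm}}$ in $\Avn$ as the scalars $g(v)$, $f(v)$ times a single matrix element; since passing to divided powers divides out exactly the factor $\prod[a^{\ol 0}_{i,j}]^!_v$ contained in $g(v)$ and $f(v)$, the surviving leading coefficient is a unit $\pm v^{\bullet}\in\mcZ^{\times}$. Feeding this into \eqref{Aj} gives an expansion of the form $\xi_{n,r}(\genXE_{A^\star_{(-)}}\cdot\genK_{A^\star_0,\tau}\cdot\genXE_{A^\star_{(+)}})=(\text{unit})\cdot A^\star(\boldsymbol j,r)+(\text{strictly }\prec\text{-lower terms})$, the lower terms being the $B^\star(\boldsymbol j',r)$ with $B^\star\prec A^\star$ produced by the ``lower terms'' of Lemma \ref{Eji}.

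With this unitriangular leading-term formula available, I would complete the argument by descending induction on the order $\preceq$ of \eqref{bj} (equivalently on the length $\lVert\cdot\lVert$). Because each leading coefficient is invertible in $\mcZ$ and the correction is an $\mcZ$-combination of strictly $\prec$-smaller elements, already in the image by induction, every $A^\star(\boldsymbol j,r)$ lies in $\xi_{n,r}(\qUZ)$. To pass from these to an arbitrary $[A^{\ol 0}|A^{\ol 1}]$ with possibly nonzero even diagonal, I would invoke \eqref{Ajr} together with Corollary \ref{Klambda}: the diagonal torus factor $[\lambda|O]$ is realized integrally as $\xi_{n,r}(\prod_i\begin{bmatrix}\genK_i\\\lambda_i\end{bmatrix}_v)$, so a further triangular step isolates $[A^{\ol 0}|A^{\ol 1}]$ inside the $\mcZ$-span of the $A^\star(\boldsymbol j,r)$ already obtained. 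This gives ${\mathcal{Q}}^{s}_{v,\mcZ}(n,r)\subseteq\xi_{n,r}(\qUZ)$, hence equality.

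The main obstacle is the unitriangular leading-term formula with a genuine $\mcZ$-unit coefficient. This demands reconciling the $\Qv$-level identity \eqref{Aj}, stated for ordinary-power products, with the integral divided powers: one must track carefully the super signs and the powers of $v$ generated by the odd root vectors $\ol{\genXE}_{i,j}$, and verify that the $\prod[a^{\ol 0}_{i,j}]^!_v$ denominators cancel exactly so that no fractional coefficient survives. One must also confirm that each correction term coming from Lemma \ref{Eji} is strictly smaller in $\preceq$, making the induction well-founded, and that the diagonal contributions can be produced integrally from the Gaussian-binomial generators $\begin{bmatrix}\genK_i\\t\end{bmatrix}_v$ alone, without introducing $\Qv$-denominators that would break integrality.
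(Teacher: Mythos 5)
Your proposal is correct and follows essentially the same route as the paper: the easy inclusion via Lemma \ref{image} and the definition of $\boldsymbol{\xi}_{n,r}$, and the reverse inclusion by applying $\xi_{n,r}$ to the divided-power PBW monomials of Theorem \ref{basis}, using the triangular expansion \eqref{Aj} with induction on $\lVert\cdot\rVert$ (via $B\prec A\Rightarrow\lVert B\rVert<\lVert A\rVert$) and left multiplication by $[\mu|O]=\xi_{n,r}(\prod_i\begin{bmatrix}\genK_i\\ \mu_i\end{bmatrix}_v)$ to cut the sum $A^{\star}(\boldsymbol{j},r)$ down to the single basis element $[A^{\star}]$. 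The only cosmetic difference is that the paper performs the $[\mu|O]$-truncation inside the inductive step rather than as a final pass, and your explicit attention to the cancellation of $\prod[a^{\ol 0}_{i,j}]^!_v$ making the leading coefficient a unit is a point the paper leaves implicit.
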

\begin{proof}
    Since $\qUZ$ is a $\mcZ$-subsuperalgebra of $\Uvqn$ generated by $\genK_i^{\pm1},~
\begin{bmatrix}
\genK_i\\t
\end{bmatrix},~ \genE_j^{(m)},~ \genF^{(m)}_j,$ $ \genK_{\bi},~ \genE_{\bj},~ \genF_{\bj}$, for $1\leq i\leq n, 1\leq j\leq n-1$, $t,m\in\N$. By the definition of $\boldsymbol{\xi}_{n,r}$ and Lemma \ref{image}, the images of generators all lie in ${\mathcal{Q}}^{s}_{v,\mcZ}(n,r)$. Hence, $\xi_{n,r}$ maps $\qUZ$ into ${\mathcal{Q}}^{s}_{v,\mcZ}(n,r)$.

Referring to\cite[Corollary 3.6]{DGLW2}, the set $\{[A^{\star}]\enspace|~ A^{\star}\in M_n(\N|\N_2)_r\}$ forms a $\mcZ$-basis for ${\mathcal{Q}}^{s}_{v,\mcZ}(n,r)$. Thus, to prove the surjectivity of $\xi_{n,r}$, it suffices to prove that $[A^{\star}]\in \xi_{n,r}(\qUZ)$ for any $A^{\star}=(A^{\ol{0}}|A^{\ol{1}})\in M_n(\N|\N_2)_r$. Let $A^{\ol{0}'}$ be the matrix obtained from $A^{\ol{0}}$ with all diagonal entries replaced by $0$. 
We proceed by induction on $\lVert A^{\star'} \rVert
$ with $A^{\star'}=(A^{\ol{0}'}|A^{\ol{1}})$. If $\lVert A^{\star'} \rVert=0$, then $A^{\star}=diag(\lambda)$, for some $\lambda\in\Lambda(n, r)$, and
\begin{align*}
[A^{\star} ]= [\lambda|O]=\xi_{n,r}(\prod\limits_{i=1}^{n}\begin{bmatrix}
\genK_i\\ \lambda_i
\end{bmatrix}_v )\in \xi_{n,r}(\qUZ)
 \end{align*}
by Corollary \ref{Klambda}. Suppose $A^{\star'}=(A^{\ol{0}'}|A^{\ol{1}})\in M_n(\N|\N_2)^{\pm}$ with $\lVert A^{\star'}\lVert>0$ and the result is true for those $ P^{\star'}  $ with $\lVert P^{\star'}\lVert <\lVert A^{\star'}\lVert $. 
We now consider the image of the basis of $\qUZ$ given in Theorem \ref{basis}. For any $\mu\in \Lambda(n,r)$ with $ro(A^{\star'})\leq \mu$, denote $\begin{bmatrix}
\genK\\ {\mu}
\end{bmatrix}_v:=\prod\limits_{i=1}^{n}\begin{bmatrix}
\genK_i\\ \mu_i
\end{bmatrix}_v $, by \eqref{Aj}, Corollary \ref{Klambda} and the formula in \cite[Lemma 4.1]{DGLW2}, we have 
\begin{align*}
\xi_{n,r}\{\begin{bmatrix}
\genK\\ {\mu}
\end{bmatrix}_v \cdot &\genXE_{A^{\star'}_{(-)}}\cdot\prod_{i=1}^n(\genK_{\ol{i}}^{a_{i,i}^{\ol{1}}}\genK_i^{\tau_i})\cdot \genXE_{A^{\star'}_{(+)}}\}\\
&\qquad= t_v [\mu|O] \cdot A^{\star'}(\boldsymbol{j},r)+\sum\limits_{P^{\star'}\prec A^{\star'}\atop\boldsymbol{j'}\in \Z^n} t_{P^{\star'},\boldsymbol{j'}}[\mu|O] \cdot P^{\star'}(\boldsymbol{j'},r)
\\&\qquad=t_v \sum\limits_{\lambda\in\Lambda(n,r-|A^{\star'}|)}v^{\lambda\cdot \boldsymbol{j}}[\mu|O]\cdot[\lambda+A^{\ol{0}'}|A^{\ol{1}}]+\sum\limits_{P^{\star'}\prec A^{\star'}\atop\boldsymbol{j'}\in \Z^n} t_{P^{\star'},\boldsymbol{j'}}[\mu|O] \cdot P^{\star'}(\boldsymbol{j'},r)
\\&\qquad=t_v \sum\limits_{\lambda\in\Lambda(n,r-|A^{\star'}|) }v^{\lambda\cdot \boldsymbol{j}}\cdot\delta_{\mu,\lambda+ro(A^{\star'})}[\lambda+A^{\ol{0}'}|A^{\ol{1}}]+\sum\limits_{P^{\star'}\prec A^{\star'}\atop\boldsymbol{j'}\in \Z^n} t_{P^{\star'},\boldsymbol{j'}}[\mu|O] \cdot P^{\star'}(\boldsymbol{j'},r)
\\&\qquad=t_v v^{(\mu-ro(A^{\star'}))\cdot \boldsymbol{j}}[\mu-ro(A^{\star'})+A^{\ol{0}'}|A^{\ol{1}}]+\sum\limits_{P^{\star'}\prec A^{\star'}\atop\boldsymbol{j'}\in \Z^n} t_{P^{\star'},\boldsymbol{j'}}[\mu|O] \cdot P^{\star'}(\boldsymbol{j'},r)
\\&\qquad=t'_v [A^{\star}]+\sum\limits_{P^{\star'}\prec A^{\star'}\atop\boldsymbol{j'}\in \Z^n} t'_{P^{\star},\boldsymbol{j'}} [P^{\star}],
\end{align*}
where $\boldsymbol{j}\in \Z^n$, $\tau_i\in \N_2$, $t_v, t'_v,t_{P^{\star'},\boldsymbol{j'}},t'_{P^{\star},\boldsymbol{j'}}\in \mcZ$. Also, $\sum\limits_{P^{\star'}\prec A^{\star'}} t'_{P^{\star},\boldsymbol{j'}} [P^{\star}]$ is a $\mcZ$-linear combination of elements $[P^{\star}]$ with $ P^{\star}\in M_n(\N|\N_2)_r$ such that $ P^{\star' }\prec A^{\star'} $. Thus, by \cite[Lemma 13.21]{DDPW}, $\lVert P^{\star '}\lVert< \lVert A^{\star'}\lVert$, and by induction $[P^{\star }]\in \xi_{n,r}(\qUZ)$.
Consequently, $[A^{\star}]\in \xi_{n,r}(\qUZ)$ and $\xi_{n,r}(\qUZ)$ is a subsuperalgebra of ${\mathcal{Q}}^{s}_{v,\mcZ}(n,r)$.
\end{proof}

 \section{Presentation for the integral form of $\Uvqn$ }\label{Integral}
In this section, we present the Lusztig-type form 0f $\Uvqn$ by generators and relations.

\begin{lem}
    For $m,n\in \N$, the following hold in $\qUZ$:
\begin{align*} 
&(1)\quad
\genE_{\ol{i}}\genF_i^{(n)}=\genF_i^{(n)}\genE_{\ol{i}}-v^{1-n}\genF_i^{(n-1)}\genK_i\genK_{\ol{i+1}}+v^{n-1}\genF_i^{(n-1)}\genK_{\ol{i}}\genK_{i+1}-\genF_i^{(n-2)}\genK_i\genK_{i+1}\genF_{\ol{i}};\\
&(2)\quad \genE_i^{(m)}\genF_{\ol{i}}=\genF_{\ol{i}}\genE_i^{(m)}+v^{m-1}\genE_i^{(m-1)}\genK_{\ol{i}}\genK_{i+1}^{-1}-v^{-m+1}\genE_i^{(m-1)}\genK_{\ol{i+1}}\genK_i^{-1}+\genE_i^{(m-2)}\genE_{\ol{i}}\genK_{i+1}^{-1}\genK_i^{-1};\\
&(3)\quad {\genXE}_{\ol{i}}\genXE_{{i+1}}^{(m)}=v^m\genXE_{{i+1}}^{(m)}{\genXE}_{\ol{i}}+\genXE_{{i+1}}^{(m-1)}(\genXE_{{i}}{\genXE}_{\ol{i+1}}-v{\genXE}_{\ol{i+1}}\genXE_{{i}}).
\end{align*}
\end{lem}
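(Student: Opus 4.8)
The plan is to prove all three identities by induction on the exponent ($n$ in (1), $m$ in (2) and (3)), using the defining relations (QQ2)--(QQ6) as the engine. In each case the base case is immediate from Definition \ref{defqn}: for (1) the value $n=1$ (with $\genF_i^{(-1)}=0$, $\genF_i^{(0)}=1$) reduces to $\genE_{\ol{i}}\genF_i-\genF_i\genE_{\ol{i}}=\genK_{\ol{i}}\genK_{i+1}-\genK_i\genK_{\ol{i+1}}$, which is the fourth relation of (QQ4); for (2) the value $m=1$ reduces to $\genE_i\genF_{\ol{i}}-\genF_{\ol{i}}\genE_i=\genK_{\ol{i}}\genK_{i+1}^{-1}-\genK_{\ol{i+1}}\genK_i^{-1}$, the third relation of (QQ4); and for (3) the value $m=1$ is exactly the mixed relation $\genE_{\ol{i}}\genE_{i+1}-v\genE_{i+1}\genE_{\ol{i}}=\genE_i\genE_{\ol{i+1}}-v\genE_{\ol{i+1}}\genE_i$ of (QQ5). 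Throughout I use the factorizations $\genF_i^{(n)}=[n]_v^{-1}\genF_i^{(n-1)}\genF_i$ and $\genE_{i+1}^{(m)}=[m]_v^{-1}\genE_{i+1}^{(m-1)}\genE_{i+1}$ to reduce the inductive step to pushing one generator through.

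For (1) I would apply the inductive hypothesis to $\genE_{\ol{i}}\genF_i^{(n-1)}$ and then carry the trailing $\genF_i$ leftward through each of the three correction terms. The only moves needed are the two (QQ3) relations
\begin{align*}
\genK_{\ol{i}}\genF_i=v\genF_i\genK_{\ol{i}}-\genF_{\ol{i}}\genK_i,\qquad
\genK_{\ol{i+1}}\genF_i=v^{-1}\genF_i\genK_{\ol{i+1}}+v^{-1}\genK_{i+1}\genF_{\ol{i}},
\end{align*}
the (QQ2) rescalings $\genK_i\genF_i=v^{-1}\genF_i\genK_i$ and $\genK_{i+1}\genF_i=v\genF_i\genK_{i+1}$, and the commutativity $\genF_i\genF_{\ol{i}}=\genF_{\ol{i}}\genF_i$ from (QQ5); note also that $\genK_i\genK_{i+1}$ commutes with both $\genF_i$ and $\genF_{\ol{i}}$. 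After multiplying out and regrouping, the coefficients collapse through the $q$-number identities
\begin{align*}
1+v^{-n}[n-1]_v=v^{1-n}[n]_v,\quad
1+v^{n}[n-1]_v=v^{n-1}[n]_v,\quad
v^{n-1}+v^{1-n}+[n-2]_v=[n]_v,
\end{align*}
which, after dividing by $[n]_v$, reproduce exactly the stated coefficients $-v^{1-n}$, $v^{n-1}$ and $-1$ and lower the divided power from $n-1$ to $n-2$ in the $\genF_{\ol{i}}$-term.

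Identity (3) is the cleanest. Setting $Z:=\genE_i\genE_{\ol{i+1}}-v\genE_{\ol{i+1}}\genE_i$, the base case identifies $Z$ with $\genE_{\ol{i}}\genE_{i+1}-v\genE_{i+1}\genE_{\ol{i}}$, and the single structural input driving the induction is the $v$-commutation
\begin{align*}
Z\,\genE_{i+1}=v^{-1}\genE_{i+1}\,Z.
\end{align*}
Substituting $Z=\genE_{\ol{i}}\genE_{i+1}-v\genE_{i+1}\genE_{\ol{i}}$ turns this precisely into the Serre relation $\genE_{i+1}^2\genE_{\ol{i}}-(v+v^{-1})\genE_{i+1}\genE_{\ol{i}}\genE_{i+1}+\genE_{\ol{i}}\genE_{i+1}^2=0$ of (QQ6), so it is free. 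Granting it, I expand $\genE_{\ol{i}}\genE_{i+1}^{(m)}=[m]_v^{-1}\genE_{\ol{i}}\genE_{i+1}^{(m-1)}\genE_{i+1}$, insert the hypothesis, apply the base case to the inner $\genE_{\ol{i}}\genE_{i+1}$, and move $Z$ to the right; the bookkeeping then closes via $[m]_v-v^{m-1}=v^{-1}[m-1]_v$ together with $[m-1]_v\genE_{i+1}^{(m-1)}=\genE_{i+1}^{(m-2)}\genE_{i+1}$, yielding $v^m\genE_{i+1}^{(m)}\genE_{\ol{i}}+\genE_{i+1}^{(m-1)}Z$ as claimed.

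Finally, (2) can be obtained by the same induction as (1) (now pushing $\genE_i$ through the correction terms via the $\genK_{\ol{i}}\genE_i$ and $\genK_{\ol{i+1}}\genE_i$ relations of (QQ3)), or more economically by applying the anti-involution $\Omega$ of \eqref{omega} to (1): since $\Omega(v)=v^{-1}$ fixes $[n]_v$ and $\Omega$ sends $\genF_i^{(n)}\mapsto\genE_i^{(n)}$, $\genE_{\ol{i}}\mapsto\genF_{\ol{i}}$, $\genK_i\mapsto\genK_i^{-1}$, $\genK_{\ol{i}}\mapsto\genK_{\ol{i}}$ while reversing products, applying it to (1) gives (2) up to recommuting the resulting $\genE_i^{(m-1)}$ past the $\genK$-factors using (QQ2)--(QQ3). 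I expect the main obstacle to be purely organizational: in (1) and (2) three correction terms propagate simultaneously, so one must track the $v$-shifts and the $\genK$-orderings with care to see the $q$-numbers recombine; no genuinely new relation beyond (QQ2)--(QQ6) is required, and since every term lies in $\qUZ$ by construction, verifying the identities in $\Uvqn$ suffices.
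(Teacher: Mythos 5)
Your proposal is correct and follows essentially the same route as the paper: induction on the divided-power exponent, with the base cases read off from (QQ4)--(QQ5) and the inductive step driven by pushing one generator through via (QQ3), (QQ5) and the Serre relations (QQ6) (the paper only writes out part (3) this way and declares (1) and (2) "parallel," which is exactly the induction you supply, with the correct $q$-number identities). Your alternative derivation of (2) from (1) via the anti-involution $\Omega$ is a valid minor shortcut, though the recommutation it requires (e.g.\ $\genK_{\ol{i}}\genE_i^{(m)}=v^m\genE_i^{(m)}\genK_{\ol{i}}+\genE_{\ol{i}}\genE_i^{(m-1)}\genK_i^{-1}$) costs about as much as the direct induction.
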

\begin{proof}
    We now prove part (3) by induction on $m$. Indeed, if $m=1$, then it is a relation in (QQ5). Now assume $m>1$, then by induction, we have 
    \begin{align*}
     {\genXE}_{\ol{i}}\genXE_{{i+1}}^{(m+1)}&= \frac{1}{[m+1]_v}[v^m\genXE_{{i+1}}^{(m)}{\genXE}_{\ol{i}}+\genXE_{{i+1}}^{(m-1)}(\genXE_{{i}}{\genXE}_{\ol{i+1}}-v{\genXE}_{\ol{i+1}}\genXE_{{i}})]\genXE_{{i+1}}\\
     &=\frac{v^m}{[m+1]_v}\genXE_{{i+1}}^{(m)}{\genXE}_{\ol{i}}\genXE_{{i+1}}
     +\frac{1}{[m+1]_v}\genXE_{{i+1}}^{(m-1)}(\genXE_{{i}}{\genXE}_{\ol{i+1}}-v{\genXE}_{\ol{i+1}}\genXE_{{i}})\genXE_{{i+1}}\\
     &=\frac{v^m}{[m+1]_v}\genXE_{{i+1}}^{(m)}[v\genXE_{{i+1}}{\genXE}_{\ol{i}}+(\genXE_{{i}}{\genXE}_{\ol{i+1}}-v{\genXE}_{\ol{i+1}}\genXE_{{i}})]\\
     &\qquad
     +\frac{1}{[m+1]_v}\genXE_{{i+1}}^{(m-1)}(\genXE_{{i}}{\genXE}_{\ol{i+1}}-v{\genXE}_{\ol{i+1}}\genXE_{{i}})\genXE_{{i+1}}\\
     &=\frac{v^{m+1}}{[m+1]_v}\genXE_{{i+1}}^{(m)}\genXE_{{i+1}}{\genXE}_{\ol{i}}
     +\frac{v^m}{[m+1]_v}\genXE_{{i+1}}^{(m)}(\genXE_{{i}}{\genXE}_{\ol{i+1}}-v{\genXE}_{\ol{i+1}}\genXE_{{i}})\\
     &\qquad
     +\frac{v^{-1}[m]_v}{[m+1]_v}\genXE_{{i+1}}^{(m)}(\genXE_{{i}}{\genXE}_{\ol{i+1}}-v{\genXE}_{\ol{i+1}}\genXE_{{i}})\\
     &=v^{m+1}\genXE_{{i+1}}^{(m+1)}{\genXE}_{\ol{i}}+\genXE_{{i+1}}^{(m)}(\genXE_{{i}}{\genXE}_{\ol{i+1}}-v{\genXE}_{\ol{i+1}}\genXE_{{i}})
    \end{align*}
    as desired, where the forth equality is due to the relation in (QQ5) and (QQ6). By a parallel argument, we can prove the remaining formulas.
\end{proof}

\begin{rem}
    Recall some formulas from \cite[Section 9.3]{CP} and \cite[Lemma 8.1]{DW} , we have the following.
    \begin{align*}
     &\genE_i^{(m)}\genF_i^{(n)}=\sum_{0\leq t\leq min\{m,n\}}\genF_i^{(n-t)}\begin{bmatrix}
\genK_i;2t-m-n\\t
\end{bmatrix}_v\genE_i^{(m-t)},\\
&\genK_{\ol{i}}^2=v^{-1}\genK_i\begin{bmatrix}
\genK_i;0\\1
\end{bmatrix}_v-(1-v^{-2})\begin{bmatrix}
\genK_i;0\\2
\end{bmatrix}_v.
    \end{align*}
\end{rem}

Along the similar line as \cite[Section 2]{Lus2} and \cite[Section 9.3]{CP}, we introduce and study a $\mcZ$-superalgebra $V_{v,\mcZ}$ defined by generators and relations.

Let $\alpha\in \Phi^+=\{\alpha_{i,j}:=\alpha_i+\cdots+\alpha_{j-1} | 1\leq i<j\leq n\}$. Denote $h(\alpha_{i,j})=j-i$ as the height of $\alpha_{i,j}$ and $g(\alpha_{i,j})=i$.

Let $V_{v,\mcZ}$ be the $\mcZ$-superalgebra defined by the generators $\genXE_\alpha^{(m)}$,${\ol{\genXE}}_\alpha$, $\genF_{\alpha}^{(m)}$,${\ol{\genF}}_{\alpha}$, $\genK_{\ol{i}}$, $\genK_i^{\pm 1}$, $\begin{bmatrix}
\genK_i;c\\t
\end{bmatrix}_v$ with $\alpha\in \Phi^+$,  $1\leq i \leq n $,$c\in \Z$, $m,t\in\N$, and the following relations:
\begin{align*}
   ({\rm QZ1})\quad
& \text{the generators } \genK_{\ol{i}}, \genK_i^{\pm 1}, \begin{bmatrix}
\genK_i;0\\t
\end{bmatrix}_v \text{ commute with each other},\\&
\genK_i\genK_i^{-1}=\genK_i^{-1}\genK_i=1, \quad\begin{bmatrix}
\genK_i;0\\0
\end{bmatrix}_v=1, \quad\genK_{\ol{i}}^2=v^{-1}\genK_i\begin{bmatrix}
\genK_i;0\\1
\end{bmatrix}_v-(1-v^{-2})\begin{bmatrix}
\genK_i;0\\2
\end{bmatrix}_v,\\
&\sum_{0\leq t\leq s} (-1)^t v^{r(s-t)}\begin{bmatrix}
r+t-1\\t
\end{bmatrix}_v\genK_i^t\begin{bmatrix}
\genK_i;0\\r
\end{bmatrix}_v\begin{bmatrix}
\genK_i;0\\s-t
\end{bmatrix}_v=\begin{bmatrix}
r+s\\r
\end{bmatrix}_v\begin{bmatrix}
\genK_i;0\\r+s
\end{bmatrix}_v, r>0, s \geq 0,
\\&
\begin{bmatrix}
\genK_i;-c\\r
\end{bmatrix}_v=\sum_{0\leq s\leq r}(-1)^{s}v^{c(r-s)}\begin{bmatrix}
c+s-1\\s
\end{bmatrix}_v\genK_i^s\begin{bmatrix}
\genK_i;0\\r-s
\end{bmatrix}_v, r\geq0, c > 0,\\&
\begin{bmatrix}
\genK_i;c\\r
\end{bmatrix}_v=\sum_{0\leq s\leq r}v^{c(r-s)}\begin{bmatrix}
c\\s
\end{bmatrix}_v\genK_i^{-s}\begin{bmatrix}
\genK_i;0\\r-s
\end{bmatrix}_v, s\geq0, c \geq 0;\\
({\rm QZ2})\quad
& \genK_i\genXE_{\alpha_j}^{(m)}=v^{m(\epsilon_i,\alpha_j)}\genXE_{\alpha_j}^{(m)}\genK_i,\quad \genK_i{\ol{\genXE}}_{\alpha_j}=v^{(\epsilon_i,\alpha_j)}{\ol{\genXE}}_{\alpha_j}\genK_i,\\&
\genK_i\genF_{\alpha_j}^{(m)}=v^{-m(\epsilon_i,\alpha_j)}\genF_{\alpha_j}^{(m)}\genK_i,\quad \genK_i{\ol{\genF}}_{\alpha_j}=v^{-(\epsilon_i,\alpha_j)}{\ol{\genF}}_{\alpha_j}\genK_i,\\&
\begin{bmatrix}
\genK_i;c\\t
\end{bmatrix}_v\genXE_{\alpha_j}^{(m)}=\genXE_{\alpha_j}^{(m)}\begin{bmatrix}
\genK_i;c+m(\epsilon_i,\alpha_j)\\t
\end{bmatrix}_v,\quad 
\begin{bmatrix}
\genK_i;c\\t
\end{bmatrix}_v{\ol{\genXE}}_{\alpha_j}={\ol{\genXE}}_{\alpha_j}\begin{bmatrix}
\genK_i;c+(\epsilon_i,\alpha_j)\\t
\end{bmatrix}_v,\\&
\begin{bmatrix}
\genK_i;c\\t
\end{bmatrix}_v\genF_{\alpha_j}^{(m)}=\genF_{\alpha_j}^{(m)}\begin{bmatrix}
\genK_i;c-m(\epsilon_i,\alpha_j)\\t
\end{bmatrix}_v,\quad 
\begin{bmatrix}
\genK_i;c\\t
\end{bmatrix}_v{\ol{\genF}}_{\alpha_j}={\ol{\genF}}_{\alpha_j}\begin{bmatrix}
\genK_i;c-(\epsilon_i,\alpha_j)\\t
\end{bmatrix}_v,\\&
\genK_{\ol{i}}\genXE_{\alpha_j}^{(m)}=v^{m(\epsilon_i,\alpha_j)}\genXE_{\alpha_j}^{(m)}\genK_{\ol{i}}+(\delta_{i,j}-\delta_{i-1,j}){\ol{\genXE}}_{\alpha_j}\genXE_{\alpha_{j}}^{(m-1)}\genK_i^{-1},\\&
\genK_{\ol{i}}\genF_{\alpha_j}^{(m)}=v^{m(\epsilon_i,\alpha_j)}\genF_{\alpha_j}^{(m)}\genK_{\ol{i}}-(\delta_{i,j}-\delta_{i-1,j})\genF_{\alpha_{j}}^{(m-1)}{\ol{\genF}}_{\alpha_j}\genK_i,\\&
\genK_{\ol{i}}{\ol{\genXE}}_{\alpha_j}=-v^{(\epsilon_i,\alpha_j)}{\ol{\genXE}}_{\alpha_j}\genK_{\ol{i}}+(\delta_{i,j}+\delta_{i-1,j})\genXE_{\alpha_j}\genK_i^{-1},\\&
\genK_{\ol{i}}{\ol{\genF}}_{\alpha_j}=-v^{(\epsilon_i,\alpha_j)}{\ol{\genF}}_{\alpha_j}\genK_{\ol{i}}+(\delta_{i,j}+\delta_{i-1,j})\genF_{\alpha_j}\genK_i;
\\
({\rm QZ3})\quad
& \genXE_{\alpha_{i}}^{(m)}\genF_{\alpha_{j}}^{(n)}=\genF_{\alpha_{j}}^{(n)}\genXE_{\alpha_{i}}^{(m)},\quad 
\genXE_{\alpha_{i}}^{(m)}{\ol{\genF}}_{\alpha_j}={\ol{\genF}}_{\alpha_j}\genXE_{\alpha_{i}}^{(m)},\\&
{\ol{\genXE}}_{\alpha_i}\genF_{\alpha_{j}}^{(n)}=\genF_{\alpha_{j}}^{(n)}{\ol{\genXE}}_{\alpha_i},\quad 
{\ol{\genXE}}_{\alpha_i}{\ol{\genF}}_{\alpha_j}=-{\ol{\genF}}_{\alpha_j}{\ol{\genXE}}_{\alpha_i}, \text{ if } i\ne j;\\&
\genXE_{\alpha_{i}}^{(m)}\genF_{\alpha_{i}}^{(n)}=\sum_{0\leq t\leq min\{m,n\}}\genF_{\alpha_{i}}^{(n-t)}\begin{bmatrix}
\genK_i;2t-m-n\\t
\end{bmatrix}_v\genXE_{\alpha_{i}}^{(m-t)},\\&
\genXE_{\alpha_{i}}^{(m)}{\ol{\genF}}_{\alpha_i}={\ol{\genF}}_{\alpha_i}\genXE_{\alpha_{i}}^{(m)}-\genK_{\ol{i+1}}\genXE_{\alpha_{i}}^{(m-1)}\genK_i^{-1}+\genK_{\ol{i}}\genXE_{\alpha_{i}}^{(m-1)}\genK_{i+1}^{-1}-{\ol{\genXE}}_{\alpha_{i}}\genXE_{\alpha_{i}}^{(m-2)}\genK_{i+1}^{-1}\genK_i^{-1},\\&
{\ol{\genXE}}_{\alpha_i}\genF_{\alpha_{i}}^{(n)}=\genF_{\alpha_{i}}^{(n)}{\ol{\genXE}}_{\alpha_i}-\genK_i\genF_{\alpha_{i}}^{(n-1)}\genK_{\ol{i+1}}+\genK_{i+1}\genF_{\alpha_{i}}^{(n-1)}\genK_{\ol{i}}-\genF_{\alpha_{i}}^{(n-2)}{\ol{\genF}}_{\alpha_i}\genK_i\genK_{i+1},\\&
{\ol{\genXE}}_{\alpha_i}{\ol{\genF}}_{\alpha_i}=-{\ol{\genF}}_{\alpha_i}{\ol{\genXE}}_{\alpha_i}+\frac{\genK_i\genK_{i+1}-\genK_i^{-1}\genK_{i+1}^{-1}}{v-v^{-1}}+(v-v^{-1})\genK_{\ol{i}}\genK_{\ol{i+1}};
\\
({\rm QZ4})\quad
& \genXE_\alpha^{(m)}\genXE_\alpha^{(n)}=\begin{bmatrix}
m+n\\n
\end{bmatrix}_v\genXE_\alpha^{(m+n)}, \quad \genXE_\alpha^{(0)}=1,\quad \genXE_\alpha^{(m)}{\ol{\genXE}}_{\alpha}={\ol{\genXE}}_{\alpha}\genXE_\alpha^{(m)},\quad {\ol{\genXE}}_{\alpha}^{(2)}=-\frac{v-v^{-1}}{v+v^{-1}}{{\genXE}}_{\alpha}^{(2)};\\&
\genXE_{\alpha_{i}}^{(m)}\genXE_{\alpha}^{(n)}=\genXE_{\alpha}^{(n)}\genXE_{\alpha_{i}}^{(m)},\qquad \genXE_{\alpha_{i}}^{(m)}{\ol{\genXE}}_{\alpha}={\ol{\genXE}}_{\alpha}\genXE_{\alpha_{i}}^{(m)}, \\&
{\ol{\genXE}}_{\alpha_i}\genXE_{\alpha}^{(m)}=\genXE_{\alpha}^{(m)}{\ol{\genXE}}_{\alpha_i},\qquad
{\ol{\genXE}}_{\alpha_i}{\ol{\genXE}}_{\alpha}=-{\ol{\genXE}}_{\alpha}{\ol{\genXE}}_{\alpha_i} 
\text{ if } (\alpha,\alpha_i)=0, \enspace i>g(\alpha);\\&
\genXE_{\alpha'}^{(m)}\genXE_{\alpha}^{(n)}=\sum_{0\leq j\leq min\{m,n\}}v^{j+(m-j)(n-j)}\genXE_{\alpha}^{(n-j)}\genXE_{\alpha+\alpha'}^{(j)}\genXE_{\alpha}^{(m-j)},\\
&
{\ol{\genXE}}_{\alpha'}\genXE_{\alpha}^{(m)}=v^{m}
\genXE_{\alpha}^{(m)}{\ol{\genXE}}_{\alpha'}+v^{m}
{\ol{\genXE}}_{\alpha+\alpha'}\genXE_{\alpha}^{(m-1)},\\
&
v^{mn}\genXE_{\alpha'}^{(m)}\genXE_{\alpha+\alpha'}^{(n)}=\genXE_{\alpha+\alpha'}^{(n)}\genXE_{\alpha'}^{(m)},\quad \genXE_{\alpha}^{(m)}\genXE_{\alpha+\alpha'}^{(n)}=v^{mn}\genXE_{\alpha+\alpha'}^{(n)}\genXE_{\alpha}^{(m)},
\\
&v^m \genXE_{\alpha+\alpha'}^{(m)}{\ol{\genXE}}_{\alpha}={\ol{\genXE}}_{\alpha}\genXE_{\alpha+\alpha'}^{(m)},\quad 
\genXE_{\alpha}^{(m)}{\ol{\genXE}}_{\alpha+\alpha'}=v^m{\ol{\genXE}}_{\alpha+\alpha'}\genXE_{\alpha}^{(m)},\quad {\ol{\genXE}}_{\alpha}{\ol{\genXE}}_{\alpha+\alpha'}=-v{\ol{\genXE}}_{\alpha+\alpha'}{\ol{\genXE}}_{\alpha},\\
&
\genXE_{\alpha'}^{(m)}{\ol{\genXE}}_{\alpha+\alpha'}=v^m{\ol{\genXE}}_{\alpha+\alpha'}\genXE_{\alpha'}^{(m)}-v^m\genXE_{\alpha'}^{(m-1)}{\genXE}_{\alpha+\alpha'}{\ol{\genXE}}_{\alpha'}+v^{m-1}\genXE_{\alpha'}^{(m-1)}{\ol{\genXE}}_{\alpha'}{\genXE}_{\alpha+\alpha'},\\
&
v{\ol{\genXE}}_{\alpha+\alpha'}{\ol{\genXE}}_{\alpha'}=-{\ol{\genXE}}_{\alpha'}{\ol{\genXE}}_{\alpha+\alpha'}-(v-v^{-1})\genXE_{\alpha+\alpha'}\genXE_{\alpha'}, \text{ if } (\alpha, \alpha')=-1\enspace g(\alpha)<g(\alpha');\\
&
{\ol{\genXE}}_{\alpha_i}\genXE_{\alpha_{i+1}}^{(m)}=v^m\genXE_{\alpha_{i+1}}^{(m)}{\ol{\genXE}}_{\alpha_i}+\genXE_{\alpha_{i+1}}^{(m-1)}(\genXE_{\alpha_{i}}{\ol{\genXE}}_{\alpha_{i+1}}-v{\ol{\genXE}}_{\alpha_{i+1}}\genXE_{\alpha_{i}}),\\
&
{\ol{\genXE}}_{\alpha_i}{\ol{\genXE}}_{\alpha_{i+1}}+v{\ol{\genXE}}_{\alpha_{i+1}}{\ol{\genXE}}_{\alpha_i}={{\genXE}}_{\alpha_i}{{\genXE}}_{\alpha_{i+1}}-v{{\genXE}}_{\alpha_{i+1}}{{\genXE}}_{\alpha_i};\\
({\rm QZ5})\quad
& \genF_\alpha^{(m)}\genF_\alpha^{(n)}=\begin{bmatrix}
m+n\\n
\end{bmatrix}_v\genF_\alpha^{(m+n)}, \quad \genF_\alpha^{(0)}=1,\quad \genF_\alpha^{(m)}{\ol{\genF}}_{\alpha}={\ol{\genF}}_{\alpha}\genF_\alpha^{(m)},\quad {\ol{\genF}}_{\alpha}^{(2)}=\frac{v-v^{-1}}{v+v^{-1}}{{\genF}}_{\alpha}^{(2)};\\
&
\genF_{\alpha_{i}}^{(m)}\genF_{\alpha}^{(n)}=\genF_{\alpha}^{(n)}\genF_{\alpha_{i}}^{(m)},\qquad \genF_{\alpha_{i}}^{(m)}{\ol{\genF}}_{\alpha}={\ol{\genF}}_{\alpha}\genF_{\alpha_{i}}^{(m)},\\
&
{\ol{\genF}}_{\alpha_{i}}\genF_{\alpha}^{(m)}=\genF_{\alpha}^{(m)}{\ol{\genF}}_{\alpha_{i}},\qquad
{\ol{\genF}}_{\alpha_i}{\ol{\genF}}_{\alpha}=-{\ol{\genF}}_{\alpha}{\ol{\genF}}_{\alpha_i},
\text{ if } (\alpha,\alpha_i)=0, \enspace i>g(\alpha);\\
&
\genF_{\alpha}^{(m)}\genF_{\alpha'}^{(n)}=\sum_{0\leq j\leq min\{m,n\}}v^{-j-(m-j)(n-j)}\genF_{\alpha}^{(n-j)}\genF_{\alpha+\alpha'}^{(j)}\genF_{\alpha}^{(m-j)},\\
&
{\ol{\genF}}_{\alpha'}\genF_{\alpha}^{(n)}=v^{n}\genF_{\alpha}^{(n)}{\ol{\genF}}_{\alpha'}-\genF_{\alpha}^{(n-1)}{\ol{\genF}}_{\alpha+\alpha'},\\
&
\genF_{\alpha+\alpha'}^{(m)}\genF_{\alpha'}^{(n)}=v^{mn}\genF_{\alpha'}^{(n)}\genF_{\alpha+\alpha'}^{(m)},\quad \genF_{\alpha}^{(m)}\genF_{\alpha+\alpha'}^{(n)}=v^{mn}\genF_{\alpha+\alpha'}^{(n)}\genF_{\alpha}^{(m)},\\
&{\ol{\genF}}_{\alpha}\genF_{\alpha+\alpha'}^{(m)}=v^{m}\genF_{\alpha+\alpha'}^{(m)}{\ol{\genF}}_{\alpha},\quad 
v^m{\ol{\genF}}_{\alpha+\alpha'}\genF_{\alpha}^{(m)}=\genF_{\alpha}^{(m)}{\ol{\genF}}_{\alpha+\alpha'},\quad 
{\ol{\genF}}_{\alpha}{\ol{\genF}}_{\alpha+\alpha'}=-v{\ol{\genF}}_{\alpha+\alpha'}{\ol{\genF}}_{\alpha},\\
&
\genF_{\alpha'}^{(m)}{\ol{\genF}}_{\alpha+\alpha'}=v^m{\ol{\genF}}_{\alpha+\alpha'}\genF_{\alpha'}^{(m)}+{\ol{\genF}}_{\alpha'}\genF_{\alpha+\alpha'}\genF_{\alpha'}^{(m-1)}-v\genF_{\alpha+\alpha'}{\ol{\genF}}_{\alpha'}\genF_{\alpha'}^{(m-1)},\\
&
v{\ol{\genF}}_{\alpha+\alpha'}{\ol{\genF}}_{\alpha'}+{\ol{\genF}}_{\alpha'}{\ol{\genF}}_{\alpha+\alpha'}=(v-v^{-1})\genF_{\alpha+\alpha'}\genF_{\alpha'}, \text{ if } (\alpha, \alpha')=-1,\enspace g(\alpha)<g(\alpha');\\
&
v^{m}\genF_{\alpha_{i+1}}^{(m)}{\ol{\genF}}_{\alpha_i}={\ol{\genF}}_{\alpha_i}\genF_{\alpha_{i+1}}^{(m)}+v^{m-1}(v{\ol{\genF}}_{\alpha_{i+1}}\genF_{\alpha_{i}}-\genF_{\alpha_{i}}{\ol{\genF}}_{\alpha_{i+1}}) \genF_{\alpha_{i+1}}^{(m-1)},\\
&
{\ol{\genF}}_{\alpha_i}{\ol{\genF}}_{\alpha_{i+1}}+v{\ol{\genF}}_{\alpha_{i+1}}{\ol{\genF}}_{\alpha_i}=v{{\genF}}_{\alpha_{i+1}}{{\genF}}_{\alpha_i}-{{\genF}}_{\alpha_i}{{\genF}}_{\alpha_{i+1}}.
\end{align*}

Let $V^{+}_{v,\mcZ}$ (respectively, $V^{-}_{v,\mcZ}$) be the $\mcZ$-subsuperalgebra of $V_{v,\mcZ}$ generated by $\genXE_\alpha^{(m)}$,${\ol{\genXE}}_\alpha$ (respectively, $\genF_{\alpha}^{(m)}$,${\ol{\genF}}_{\alpha}$) with $\alpha\in \Phi^+$,$m\in\N$. Denote by  $V^{0}_{v,\mcZ}$ the  subsuperalgebra of $V_{v,\mcZ}$ generated by $\genK_{\ol{i}}$, $\genK_i^{\pm 1}$, $\begin{bmatrix}
\genK_i;c\\t
\end{bmatrix}_v$ with $1\leq i \leq n $,$c\in \Z$, $t\in\N$.

For any $\beta \in \Phi^+$, there exist integers $i$ and $j$ with $i<j$ such that $\beta=\alpha_{i,j}$. And we define a partial order for the set $\Phi^+$as follows:  for any $\alpha_{i,j},~\alpha_{k,l}\in\Phi^+$ we say $\alpha_{i,j}<\alpha_{k,l}$ if $i<k$ or $i=k$ and $j<l$. We then compute the product $\prod_{\alpha\in\Phi^+}(\genXE_{\alpha}^{(m)}{\ol{\genXE}}_{\alpha}^{m'})$ where $m\in\N$ and $m'\in\{0,1\}$, using any total order on $\Phi^+$ that is compatible with the partial order defined above.
Then, along the similar line as \cite[Proposition 2.12, Proposition 2.17]{Lus2}, we have the following conclusion.
\begin{prop}
\begin{enumerate}
    \item The superalgebra $V^{+}_{v,\mcZ}$ is generated by $\genXE_{\alpha_i}^{(m)}$, ${\ol{\genXE}}_{\alpha_i}$ as a $\mcZ$-superalgebra with $1\leq i <n$, $m\in \N$;
    
    \item $V_{v,\mcZ}$ is generated as a $\mcZ$-superalgebra by $\genXE_{\alpha_i}^{(m)}$, ${\ol{\genXE}}_{\alpha_i}$, $\genF_{\alpha_i}^{(m)}$, ${\ol{\genF}}_{\alpha_i}$, $\genK_{\ol{i}}$, $\genK_i^{\pm 1}$, $\begin{bmatrix}
\genK_i\\t
\end{bmatrix}_v$  with $m\in \N, 1\leq i \leq n, t\in \N$;

    \item  the monomials $\prod_{\alpha\in\Phi^+}(\genXE_{\alpha}^{(m)}{\ol{\genXE}}_{\alpha}^{m'})$  with $m\in \N, m'\in\{0,1\}$ generate $V^{+}_{v,\mcZ}$ as a $\mcZ$-supermodule.
\end{enumerate} 
\end{prop}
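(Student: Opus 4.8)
The plan is to follow \cite[Proposition 2.12, Proposition 2.17]{Lus2}, proving the three assertions in order so that (1) feeds into (2) and (3); throughout, the driving mechanism is the family of height-raising relations in (QZ4), which are the abstract counterparts of the recursion in Corollary \ref{rv_k}.

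For part (1) I would induct on the height $h(\beta)$ of $\beta\in\Phi^+$. When $h(\beta)=1$ the vectors $\genXE_{\beta}^{(m)}$ and ${\ol{\genXE}}_{\beta}$ are already among the listed simple generators. For $\beta=\alpha_{i,j}$ with $j-i>1$, fix $i<k<j$ and split $\beta=\gamma+\gamma'$ with $\gamma=\alpha_{i,k}$ and $\gamma'=\alpha_{k,j}$, both of strictly smaller height and satisfying $(\gamma,\gamma')=-1$, $g(\gamma)<g(\gamma')$. The even relation $\genXE_{\gamma'}^{(m)}\genXE_{\gamma}^{(n)}=\sum_{s}v^{s+(m-s)(n-s)}\genXE_{\gamma}^{(n-s)}\genXE_{\beta}^{(s)}\genXE_{\gamma}^{(m-s)}$ of (QZ4) exhibits the divided power $\genXE_{\beta}^{(s)}$ directly, so a downward induction on $s$ from the top term isolates each $\genXE_{\beta}^{(m)}$ as a $\mcZ$-linear combination of products of the lower-height vectors $\genXE_{\gamma}^{(\cdot)},\genXE_{\gamma'}^{(\cdot)}$; since the coefficients $v^{s+(m-s)(n-s)}$ lie in $\mcZ$, no denominators appear. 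Likewise the odd relation ${\ol{\genXE}}_{\gamma'}\genXE_{\gamma}^{(m)}=v^{m}\genXE_{\gamma}^{(m)}{\ol{\genXE}}_{\gamma'}+v^{m}{\ol{\genXE}}_{\beta}\genXE_{\gamma}^{(m-1)}$ solves for ${\ol{\genXE}}_{\beta}$ in terms of $\genXE_{\gamma}$ and ${\ol{\genXE}}_{\gamma'}$. By induction all of these lie in the subalgebra generated by the simple root vectors, proving (1).

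For part (2) I would combine (1) with its negative analogue: the same height induction applied to (QZ5), the formal $E\leftrightarrow F$ transform of (QZ4), shows $V^{-}_{v,\mcZ}$ is generated by $\genF_{\alpha_i}^{(m)},{\ol{\genF}}_{\alpha_i}$. The Cartan subalgebra $V^{0}_{v,\mcZ}$ is generated by $\genK_i^{\pm1},\genK_{\ol{i}},\begin{bmatrix}\genK_i\\t\end{bmatrix}_v$, because the last two relations of (QZ1) express every $\begin{bmatrix}\genK_i;c\\t\end{bmatrix}_v$ through the $c=0$ brackets and powers of $\genK_i$. Finally the relations (QZ2) and (QZ3) let me push every Cartan and negative factor to the left of the positive ones, so that any monomial in the generators of $V_{v,\mcZ}$ reduces to a sum of elements of $V^{-}_{v,\mcZ}\,V^{0}_{v,\mcZ}\,V^{+}_{v,\mcZ}$; assertion (2) follows.

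For part (3) I would show that the ordered monomials $\prod_{\alpha\in\Phi^+}(\genXE_{\alpha}^{(m)}{\ol{\genXE}}_{\alpha}^{m'})$, taken with respect to a total order refining the partial order on $\Phi^+$ introduced above, span $V^{+}_{v,\mcZ}$ over $\mcZ$. By (1) it suffices to straighten an arbitrary product of simple root vectors: whenever two adjacent factors violate the chosen order, the relevant commutation relation of (QZ4) rewrites their product as a $\mcZ$-combination of terms that are either better ordered or involve a strictly higher root vector $\genXE_{\alpha+\alpha'}$ together with lower-height factors. The restriction $m'\in\{0,1\}$ is forced by ${\ol{\genXE}}_{\alpha}^{(2)}=-\tfrac{v-v^{-1}}{v+v^{-1}}\genXE_{\alpha}^{(2)}$, which converts any square of an odd root vector into an even divided power. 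The hard part will be the termination and integrality of this straightening: one must choose an inductive statistic (a height/length measure on monomials) that strictly decreases under each application of a (QZ4) relation while keeping all structure constants in $\mcZ$, and one must carefully track the super-signs produced when odd factors are transposed. This is exactly where the Gaussian-binomial identities of (QZ1), the divided-power normalizations, and the Clifford-type relation $\genK_{\ol{i}}^2=v^{-1}\genK_i\begin{bmatrix}\genK_i;0\\1\end{bmatrix}_v-(1-v^{-2})\begin{bmatrix}\genK_i;0\\2\end{bmatrix}_v$ must be invoked, making the argument more delicate than the purely even case of \cite[Proposition 2.17]{Lus2}.
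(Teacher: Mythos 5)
Your proposal is correct and follows essentially the same route as the paper, which simply invokes the argument of \cite[Propositions 2.12, 2.17]{Lus2}: height induction on $\Phi^+$ via the (QZ4) relations to generate the non-simple root vectors for (1), the $E\leftrightarrow F$ analogue plus (QZ1)--(QZ3) for (2), and ordered straightening for (3). The one place you stop short --- exhibiting a terminating, integral straightening statistic in part (3) --- is exactly the detail the paper also leaves to Lusztig, so nothing in your outline diverges from or falls below the paper's own level of justification.
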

\begin{proof}
    By an argument parallel to the proof of Proposition 2.12 in \cite{Lus2}, one can check that the proposition holds.
\end{proof}

Using the natural embedding $\mcZ\in \Qv$, we form the $\Qv$-superalgebra $V_{\Qv}^+$, $V_{\Qv}^-$,$V_{\Qv}^0$ and $V_{\Qv}$ by applying $()\otimes_\Z \Qv$ to $V^{+}_{v,\mcZ}$, $V^{-}_{v,\mcZ}$, $V^{0}_{v,\mcZ}$,$V_{v,\mcZ}$. And we shall write $\genXE_{\alpha}$ and $\genXE_{\alpha}$ instead of $\genXE_{\alpha}^{(1)}$ and $\genXE_{\alpha}^{(1)}$.

Similar to \cite[Proposition 9.3.3]{CP}, we have the following.

\begin{prop}
\begin{enumerate}
    \item The set of products $\prod_{\alpha\in\Phi^+}(\genXE_{\alpha}^{(m)}{\ol{\genXE}}_{\alpha}^{m'})$ form a $\mcZ$-basis of $V^+_{v,\mcZ}$ and a $\Qv$-basis of $V_{\Qv}^+$ with $m\in \N, m'\in\{0,1\}$;
    \item the set of products $\prod_{\alpha\in\Phi^+}(\genF_{\alpha}^{(m)}{\ol{\genF}}_{\alpha}^{m'})$ form a $\mcZ$-basis of $V^-_{v,\mcZ}$ and a $\Qv$-basis of $V_{\Qv}^-$ with $m\in \N, m'\in\{0,1\}$;
\item the set of products $\prod_{ i=1}^{n}(\genK_i^{\sigma_i}\begin{bmatrix}
\genK_i\\t_i
\end{bmatrix}_v\genK_{\ol{i}}^{\xi_i})$ form a $\mcZ$-basis of $V^0_{v,\mcZ}$ and a $\Qv$-basis of $V_{\Qv}^0$ with $t_i\in \N, \sigma_i,\xi_i\in\{0,1\}$;
\item the set of products $\prod_{\alpha\in\Phi^+}(\genF_{\alpha}^{(n)}{\ol{\genF}}_{\alpha}^{n'}) \cdot \prod_{ i=1}^{n}(\genK_i^{\sigma_i}\begin{bmatrix}
\genK_i\\t_i
\end{bmatrix}_v\genK_{\ol{i}}^{\xi_i})\cdot \prod_{\alpha\in\Phi^+}(\genXE_{\alpha}^{(m)}{\ol{\genXE}}_{\alpha}^{m'})$ form a $\mcZ$-basis of $V_{v,\mcZ}$ with $m,n,t_i\in \N,~ m',n',\sigma_i,\xi_i\in\{0,1\}$.
\end{enumerate}
\end{prop}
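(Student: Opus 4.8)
The plan is to realize $V_{v,\mcZ}$ concretely inside $\qUZ$ and transport the PBW basis of Theorem~\ref{basis} back across. First I construct a $\mcZ$-superalgebra homomorphism
$$\pi\colon V_{v,\mcZ}\longrightarrow \qUZ$$
sending each abstract generator $\genXE_\alpha^{(m)}$, $\ol{\genXE}_\alpha$, $\genF_\alpha^{(m)}$, $\ol{\genF}_\alpha$, $\genK_{\ol{i}}$, $\genK_i^{\pm1}$, $\begin{bmatrix}\genK_i;c\\t\end{bmatrix}_v$ to the element of $\qUZ$ bearing the same name, with $\alpha=\alpha_{i,j}$ identified with the pair $(i,j)$ and $\genXE_\alpha^{(m)}$ the divided power of the root vector $\genXE_{i,j}$. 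The essential point is that $\pi$ is \emph{well defined}: every defining relation (QZ1)--(QZ5) already holds among these images in $\qUZ$. Indeed (QZ1) is the $\boldsymbol U^0$-relation together with the identity for $\genK_{\ol i}^2$ recorded in the Remark; (QZ2) follows from Lemma~\ref{KE} (and its $\Omega$-image) together with the even formulas of \cite{Lus2,CP}; (QZ3) collects the cross commutation formulas of Lemmas~\ref{ne-pe1}, \ref{ne-po1}, \ref{no-po1} and the divided-power lemma opening this section; and (QZ4), (QZ5) are precisely the positive-positive (resp. negative-negative) relations of Lemmas~\ref{pe-pe1}, \ref{pe-po1}, \ref{po-po1} and their $\Omega$-images. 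Since the generators of $\qUZ$ lie in the image, $\pi$ is surjective, and it restricts to surjections $\pi^{\pm}\colon V^{\pm}_{v,\mcZ}\to{{\boldsymbol U}^{\pm}_{v,\mc Z}}$ and $\pi^0\colon V^0_{v,\mcZ}\to{{\boldsymbol U}^{0}_{v,\mc Z}}$.

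Next I prove (1)--(3) by a single linear-algebra observation. By the preceding proposition the monomials $\prod_{\alpha\in\Phi^+}(\genXE_\alpha^{(m)}\ol{\genXE}_\alpha^{m'})$ \emph{span} $V^+_{v,\mcZ}$ as a $\mcZ$-module. Under the bijection $A^\star\leftrightarrow(m,m')_{\alpha}$ between $M_n(\N|\N_2)^{\pm}$ and the exponent data (reading $m=a^{\ol0}_{i,j}$, $m'=a^{\ol1}_{i,j}$ for $\alpha=\alpha_{i,j}$, with the chosen total order matching the product order defining $\genXE_{A^\star_{(+)}}$), the map $\pi^+$ carries this spanning set exactly onto the set $\{\genXE_{A^\star_{(+)}}\}$, which is a $\mcZ$-basis of ${{\boldsymbol U}^{+}_{v,\mc Z}}$ by Theorem~\ref{basis}(1). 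A spanning set whose images under a linear map are linearly independent is itself linearly independent; hence the monomials form a $\mcZ$-basis of $V^+_{v,\mcZ}$ and $\pi^+$ is an isomorphism. The same argument using Theorem~\ref{basis}(2) and the $\boldsymbol U^0$-spanning set of \cite[Proposition 8.2(1)]{DW} yields (2) and (3). Tensoring the free $\mcZ$-modules $V^{\pm}_{v,\mcZ}$, $V^0_{v,\mcZ}$ with $\Qv$ preserves bases, giving the corresponding $\Qv$-basis statements.

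Finally, for (4) I invoke a triangular decomposition. The preceding proposition shows $V_{v,\mcZ}$ is generated by the simple generators, and relations (QZ2)--(QZ3) permit a straightening: any monomial can be rewritten by moving all $\genF$-type generators to the left and all $\genXE$-type generators to the right, with toral factors accumulating in the middle, so that the products $\big(\prod_{\alpha}\genF_\alpha^{(n)}\ol{\genF}_\alpha^{n'}\big)\cdot\big(\prod_i\genK_i^{\sigma_i}\begin{bmatrix}\genK_i\\ t_i\end{bmatrix}_v\genK_{\ol i}^{\xi_i}\big)\cdot\big(\prod_\alpha\genXE_\alpha^{(m)}\ol{\genXE}_\alpha^{m'}\big)$ span $V_{v,\mcZ}$. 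Applying $\pi$ sends these to the triangular basis $\genXE_{A^\star_{(-)}}\cdot\genK_{A^\star_0,\tau}\cdot\genXE_{A^\star_{(+)}}$ of $\qUZ$ from Theorem~\ref{basis}(3), which is linearly independent; by the spanning-plus-independent-image argument once more, these products form a $\mcZ$-basis of $V_{v,\mcZ}$, proving (4).

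The main obstacle I anticipate is the verification that $\pi$ is well defined, namely that the long list (QZ2)--(QZ5), written for \emph{arbitrary} positive roots $\alpha,\alpha'$ and their divided powers, is an exact consequence of the root-vector commutation formulas of Section~\ref{root vector}, which were established only for the specific pairs $\genXE_{i,j},\genXE_{k,l}$ and without divided powers. Matching the abstract height/support conditions $(\alpha,\alpha')=0,\pm1$ and $g(\alpha)<g(\alpha')$ against the concrete index inequalities $i<k<j<l$ and so on, identifying which pair plays the role of $\alpha,\alpha',\alpha+\alpha'$, and promoting each relation to its divided-power form by induction on the exponent (as in the lemma opening this section), is the part requiring genuine care. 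Once this dictionary is in place the remaining steps are formal and parallel \cite[Proposition 2.12, 2.17]{Lus2} and \cite[Proposition 9.3.3]{CP}.
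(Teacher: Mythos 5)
Your proposal is correct and follows essentially the same route as the paper: the paper also identifies the abstract root vectors $\genXE_\beta,\ol{\genXE}_\beta,\genF_\beta,\ol{\genF}_\beta$ with the braid-group root vectors $T_{w_\beta}(\genE_{i_\beta})=\genXE_{i,i+m+1}$, etc., in $\qUZ$ and then invokes Theorem \ref{basis} (spanning from the preceding proposition, independence transported back from the PBW basis of $\qUZ$). Your write-up is merely more explicit than the paper's about the homomorphism $\pi$, its well-definedness via the commutation formulas of Section \ref{root vector}, and the spanning-plus-independent-image argument.
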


For any $\beta \in \Phi^+$, it can be written uniquely as $\beta=w_{\beta}(\alpha_{i_{\beta}}):=s_i s_{i+1}\cdots s_{i+m-1}(\alpha_{i+m})$ with some $i<m$. Then the root vectors $\genXE_\beta, {\ol{\genXE}}_\beta, \genF_\beta, {\ol{\genF}}_\beta$ in $\Uvqn$  can be described as 
\begin{align*}
    &\genXE_\beta=T_{w_{\beta}}(\genE_{i_{\beta}})=T_i T_{i+1}\cdots T_{i+m-1}(\genE_{i+m}),\\ 
    &{\ol{\genXE}}_\beta=T_{w_{\beta}}(\genE_{\ol{i_{\beta}}})=T_i T_{i+1}\cdots T_{i+m-1}(\genE_{\ol{i+m}}),\\
    &\genF_\beta=T_{w_{\beta}}(\genE_{i_{\beta}})=T_i T_{i+1}\cdots T_{i+m-1}(\genF_{i+m}),\\ 
    &{\ol{\genF}}_\beta=T_{w_{\beta}}(\genE_{\ol{i_{\beta}}})=T_i T_{i+1}\cdots T_{i+m-1}(\genF_{\ol{i+m}}).
\end{align*}
Associated with Proposition \ref{action_uvqn}, a direct calculation shows that $T_{w_{\beta}}(\genE_{i_{\beta}})=\genXE_{i,i+m+1}$, $T_{w_{\beta}}(\genE_{\ol{i_{\beta}}})={\ol{\genXE}}_{i,i+m+1}$, $T_{w_{\beta}}(\genF_{i_{\beta}})=\genE_{i+m+1,i}$, and $T_{w_{\beta}}(\genF_{\ol{i_{\beta}}})={\ol{\genXE}}_{i+m+1,i}$. Hence, by Theorem \ref{basis}, the proposition follows.

\begin{prop}\label{iso}
\begin{enumerate}
\item There is a unique $\Qv$-superalgebra isomorphism $V_{\Qv}\to \Uvqn$ which takes $\genXE_\beta$ to $T_{w_{\beta}}(\genE_{i_{\beta}})$, ${\ol{\genXE}}_\beta$ to $T_{w_{\beta}}(\genE_{\ol{i_{\beta}}})$, $\genF_\beta$ to $T_{w_{\beta}}(\genF_{i_{\beta}})$, ${\ol{\genF}}_\beta$ to $T_{w_{\beta}}(\genF_{\ol{i_{\beta}}})$, and $\genK_i^{\pm 1}$ to $\genK_i^{\pm 1}$ for $\beta\in \Phi^+, 1\leq i \leq n$.
\item There is a unique $\mcZ$-superalgebra isomorphism $V_{v,\mcZ}\to \qUZ$  that takes $\genXE^{(m)}_\beta$ to $T_{w_{\beta}}(\genE^{(m)}_{i_{\beta}})$, ${\ol{\genXE}}_\beta$ to $T_{w_{\beta}}(\genE_{\ol{i_{\beta}}})$, $\genF^{(m)}_\beta$ to $T_{w_{\beta}}(\genF^{(m)}_{i_{\beta}})$, ${\ol{\genF}}_\beta$ to $T_{w_{\beta}}(\genF_{\ol{i_{\beta}}})$, and $\genK_i^{\pm 1}$ to $\genK_i^{\pm 1}$ for $\beta\in \Phi^+, m\in \N, 1\leq i \leq n$.
\item There is an isomorphism $\qUZ\otimes_{\mcZ}\Qv\to \Uvqn$.
\end{enumerate}
\end{prop}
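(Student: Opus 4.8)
The plan is to prove part (2) first and then deduce (1) and (3) from it, since the integral presentation is the statement the rest of the theory rests on. I would define a $\mcZ$-superalgebra homomorphism $\Phi\colon V_{v,\mcZ}\to\qUZ$ on the generators by
\[
\genXE_\beta^{(m)}\mapsto T_{w_\beta}(\genE_{i_\beta}^{(m)}),\quad
{\ol{\genXE}}_\beta\mapsto T_{w_\beta}(\genE_{\ol{i_\beta}}),\quad
\genF_\beta^{(m)}\mapsto T_{w_\beta}(\genF_{i_\beta}^{(m)}),\quad
{\ol{\genF}}_\beta\mapsto T_{w_\beta}(\genF_{\ol{i_\beta}}),
\]
together with $\genK_i^{\pm1}\mapsto\genK_i^{\pm1}$, $\genK_{\ol i}\mapsto\genK_{\ol i}$ and $\begin{bmatrix}\genK_i;c\\t\end{bmatrix}_v$ to the corresponding element of ${{\boldsymbol U}^{0}_{v,\mc Z}}$. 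By the discussion preceding the statement, these images are exactly the root vectors $\genXE_{i,i+m+1}$, ${\ol{\genXE}}_{i,i+m+1}$, $\genXE_{i+m+1,i}$, ${\ol{\genXE}}_{i+m+1,i}$ of Proposition~\ref{rv}, which all lie in $\qUZ$ by the lemma giving $T_i(\qUZ)=\qUZ$, so once well-definedness is known $\Phi$ does land in the right target.

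The heart of the argument, and the step I expect to be the main obstacle, is to check that $\Phi$ respects all of the defining relations (QZ1)--(QZ5), so that it is a genuine superalgebra homomorphism. This is precisely what Section~\ref{root vector} was set up to supply: each defining relation is matched against a commutation formula already proved. The torus relations (QZ1) are the standard identities in $\Uv^0$ coming from (QQ1), with the expressions for $\genK_{\ol i}^2$ and the Gaussian binomials taken from the remark in Section~\ref{Integral}. The $\genK$--root-vector relations (QZ2) follow from (QQ2) and Lemma~\ref{KE} together with its $\Omega$-image. The mixed positive/negative relations (QZ3) for simple-type vectors are (QQ4), refined by the opening lemma of this section and by the divided-power identity $\genE_i^{(m)}\genF_i^{(n)}=\sum_t\genF_i^{(n-t)}\begin{bmatrix}\genK_i;2t-m-n\\t\end{bmatrix}_v\genE_i^{(m-t)}$ recalled there. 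The purely even/odd positive relations (QZ4) are read off from Lemmas~\ref{pe-pe1}, \ref{pe-po1} and \ref{po-po1}, their interchanged variants \eqref{pe-pe2} and the accompanying displayed companions, and the divided-power relation proved by induction at the start of Section~\ref{Integral}; here one translates the abstract data $\alpha+\alpha'$ with $g(\alpha)<g(\alpha')$ into the index patterns $i<k<j<l$, $i<j=k<l$, etc. Finally the negative relations (QZ5) are the images of (QZ4) under the anti-involution $\Omega$. The matching is unambiguous because Remark~\ref{EX} identifies $T_{w_\beta}(\genE_{i_\beta})$ with the bracketed root vector $\genXE_{i,i+m+1}$ and guarantees independence of the reduced word, so a single element of $\qUZ$ represents both sides of every relation.

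Granting well-definedness, surjectivity of $\Phi$ is immediate, since the simple root vectors $\genE_j,\genF_j,\genE_{\ol j},\genF_{\ol j}$, the divided powers $\genE_j^{(m)},\genF_j^{(m)}$ and the $\genK$'s all lie in the image and generate $\qUZ$ by definition. For injectivity I would compare bases: by the preceding proposition the ordered monomials $\prod_{\alpha\in\Phi^+}(\genF_\alpha^{(n)}{\ol{\genF}}_\alpha^{n'})\cdot\prod_{i=1}^{n}(\genK_i^{\sigma_i}\begin{bmatrix}\genK_i\\t_i\end{bmatrix}_v\genK_{\ol i}^{\xi_i})\cdot\prod_{\alpha\in\Phi^+}(\genXE_\alpha^{(m)}{\ol{\genXE}}_\alpha^{m'})$ form a $\mcZ$-basis of $V_{v,\mcZ}$, and $\Phi$ sends them to the PBW-type $\mcZ$-basis of $\qUZ$ furnished by Theorem~\ref{basis}. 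A homomorphism carrying a basis to a basis is an isomorphism, which proves (2). Part (1) follows verbatim over $\Qv$, using the $\Qv$-basis of $V_{\Qv}$ and the corresponding $\Qv$-PBW basis of $\Uvqn$; equivalently, (1) is obtained from (2) by applying $(-)\otimes_\mcZ\Qv$. Part (3) is then the base-change statement: the composite
\[
\qUZ\otimes_\mcZ\Qv\;\xrightarrow{\ \sim\ }\;V_{v,\mcZ}\otimes_\mcZ\Qv=V_{\Qv}\;\xrightarrow{\ \sim\ }\;\Uvqn,
\]
whose first arrow is $(\Phi^{-1})\otimes\mathrm{id}$ from (2) and whose second arrow is the isomorphism of (1), is an isomorphism because each factor is, giving the required map $\qUZ\otimes_\mcZ\Qv\to\Uvqn$.
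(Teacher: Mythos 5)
Your proposal is correct and follows essentially the same route as the paper, which (very tersely) justifies the proposition by noting that the relations (QZ1)--(QZ5) and (QQ1)--(QQ6) are interderivable via Lemmas \ref{KE}--\ref{no-po1}, identifies $T_{w_\beta}(\genE_{i_\beta})$ with the explicit root vectors $\genXE_{i,i+m+1}$, and matches the monomial basis of $V_{v,\mcZ}$ against the PBW basis of $\qUZ$ from Theorem \ref{basis}. Your write-up simply makes explicit the homomorphism-on-generators, surjectivity, and basis-to-basis steps that the paper leaves implicit.
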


Hence, $V_{v,\mcZ}$ is the restricted integral form for $\Uvqn$. And it is easy to check that the relations in (QZ1-QZ5) and (QQ1-QQ5) can be derived from each other by Lemma \ref{KE} - Lemma \ref{no-po1}.

 \section{Application: quantum queer superalgebra at root of unity }\label{root 1}

When $v$ is not a root of unity, the restricted and non-restricted integral forms coincide; once $v$ becomes a root of unity, they diverge and are no longer isomorphic. In the present section we examine the root-of-unity case as an application of the preceding constructions. 

Define {$\eUZ=\qUZ\otimes_{\mcZ}\Z[\epsilon,\epsilon^{-1}]$ } using the homomorphism $\mcZ\to \Z[\epsilon,\epsilon^{-1}]$ taking $v$ to $\epsilon$, where $\epsilon\in \C^{\times}$. And we assume that $\epsilon$ is a primitive $l$-th root of unity, where $l$ is an odd number. 

There are some properties in $\eUZ$ following from \cite[Section 5]{Lus2} and \cite[Section 9.3]{CP}.

\begin{lem}\cite[Section 9.3]{CP}
 In $\eUZ$, we have
 \begin{enumerate}
 \item if $0\leq m<l,~0\leq n<l,~m+n\geq l$, then $\begin{bmatrix}
m+n\\n
\end{bmatrix}_v=0$. And also $(\genXE_{\alpha})^l =0$, $\genXE_{\alpha}({\ol{\genXE}}_{\alpha})^{l-1} =0$ for all $\alpha\in\Phi^+$;

 \item $\genK_i^l$ is central and $\genK_i^{2l}=1$ for $1\leq i \leq n$.
 \end{enumerate}
\end{lem}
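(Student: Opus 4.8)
The plan is to reduce every assertion to an identity with coefficients in $\mcZ=\Z[v,v^{-1}]$ that already holds in $\qUZ$ (equivalently in $V_{v,\mcZ}\cong\qUZ$ of Proposition \ref{iso}), and then base change along $\mcZ\to\Z[\epsilon,\epsilon^{-1}]$, $v\mapsto\epsilon$. The two features of the specialization that drive everything are $[l]_v|_{v=\epsilon}=\tfrac{\epsilon^l-\epsilon^{-l}}{\epsilon-\epsilon^{-1}}=0$ (so $[l]^!_\epsilon=0$ while $[k]^!_\epsilon\neq0$ for $0\le k<l$), and $\epsilon^l=1$. Since $l$ is odd, $\epsilon\neq\pm1$ is not a primitive fourth root of unity, so $\epsilon-\epsilon^{-1}$ and $\epsilon+\epsilon^{-1}$ are both nonzero elements of the integral domain $\Z[\epsilon,\epsilon^{-1}]\subset\C$; as $\eUZ$ is free over $\Z[\epsilon,\epsilon^{-1}]$ by Theorem \ref{basis}, multiplication by any nonzero scalar is injective on $\eUZ$, which I use to cancel at the end.

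For part (1), the binomial vanishing follows from the integral identity $[n]^!_v\begin{bmatrix}m+n\\n\end{bmatrix}_v=\prod_{i=1}^{n}[m+i]_v$ in $\mcZ$: when $m<l\le m+n$ the index $l-m$ lies in $\{1,\dots,n\}$, so at $v=\epsilon$ the right side acquires the factor $[l]_\epsilon=0$, while $[n]^!_\epsilon\neq0$, whence $\begin{bmatrix}m+n\\n\end{bmatrix}_\epsilon=0$. Next, the relation $\genXE_\alpha^{l}=[l]^!_v\,\genXE_\alpha^{(l)}$ (the divided power $\genXE_\alpha^{(l)}$ lying in $\qUZ$ by the construction preceding Proposition \ref{iso}) gives $\genXE_\alpha^{l}=[l]^!_\epsilon\,\genXE_\alpha^{(l)}=0$ in $\eUZ$. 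Finally, writing $l-1=2s$, I combine the $\mcZ$-relation $(v+v^{-1})\ol{\genXE}_\alpha^{2}=-(v-v^{-1})\genXE_\alpha^{2}$ (Lemma \ref{po-po1}, case $i=k\ \&\ j=l$) with the commutativity $\genXE_\alpha\ol{\genXE}_\alpha=\ol{\genXE}_\alpha\genXE_\alpha$ (Lemma \ref{pe-po1}, same case):
\begin{align*}
(\epsilon+\epsilon^{-1})^{s}\,\genXE_\alpha(\ol{\genXE}_\alpha)^{l-1}
&=\genXE_\alpha\big((\epsilon+\epsilon^{-1})\,\ol{\genXE}_\alpha^{\,2}\big)^{s}
=\big(-(\epsilon-\epsilon^{-1})\big)^{s}\genXE_\alpha^{\,2s+1}
=\big(-(\epsilon-\epsilon^{-1})\big)^{s}\genXE_\alpha^{\,l}=0 .
\end{align*}
Cancelling the nonzero scalar $(\epsilon+\epsilon^{-1})^{s}$ yields $\genXE_\alpha(\ol{\genXE}_\alpha)^{l-1}=0$.

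For part (2), centrality of $\genK_i^{l}$ is immediate: every generator $x$ satisfies $\genK_i x=v^{c}x\genK_i$ for a fixed $c\in\Z$ (by (QQ2), with $c=0$ against the various $\genK$'s by (QQ1)), so $\genK_i^{l}x=\epsilon^{lc}x\genK_i^{l}=x\genK_i^{l}$ since $\epsilon^{l}=1$. For $\genK_i^{2l}=1$ I start from the defining identity
\[
\prod_{s=1}^{l}\big(\genK_i v^{1-s}-\genK_i^{-1}v^{s-1}\big)=(v-v^{-1})^{l}\,[l]^!_v\begin{bmatrix}\genK_i;0\\ l\end{bmatrix}_v ,
\]
valid in $\qUZ$. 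Its right side vanishes at $v=\epsilon$ because $[l]^!_\epsilon=0$, so the left side vanishes in $\eUZ$. Evaluating the left side, the scalars $\{\epsilon^{2(s-1)}\}_{s=1}^{l}$ run over all $l$-th roots of unity (as $\gcd(2,l)=1$), so that $\prod_{s=1}^{l}(\genK_i-\epsilon^{2(s-1)}\genK_i^{-1})=\genK_i^{l}-\genK_i^{-l}$, while the overall prefactor $\epsilon^{\sum_{s=1}^{l}(1-s)}=\epsilon^{l(1-l)/2}=1$. Hence $\genK_i^{l}-\genK_i^{-l}=0$, i.e.\ $\genK_i^{2l}=1$.

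I expect the product evaluation in part (2) to be the main point requiring care: one must verify that the $l$ scalars $\epsilon^{2(s-1)}$ are precisely the $l$-th roots of unity, invoke the factorization $\prod_{j=0}^{l-1}(X-\zeta^{j}Y)=X^{l}-Y^{l}$ with $X=\genK_i$, $Y=\genK_i^{-1}$, and note that the leftover power of $\epsilon$ is trivial. The only other mild subtlety is the cancellation of $\epsilon+\epsilon^{-1}$ in the odd-vector statement, handled via freeness of $\eUZ$; all remaining steps are formal consequences of $[l]_\epsilon=0$ and $\epsilon^{l}=1$ applied to the integral relations already in hand.
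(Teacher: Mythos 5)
Your proof is correct. The paper itself gives no argument for this lemma---it is simply quoted from \cite[Section 9.3]{CP}---so there is nothing internal to compare against; your reduction of each claim to an integral identity in $\qUZ$ followed by specialization $v\mapsto\epsilon$ is exactly the standard Chari--Pressley argument for the even part (Gaussian binomial vanishing via $[l]_\epsilon=0$, $\genXE_\alpha^l=[l]^!_\epsilon\genXE_\alpha^{(l)}=0$, and the factorization $\prod_{s=1}^{l}(\genK_i\epsilon^{1-s}-\genK_i^{-1}\epsilon^{s-1})=\genK_i^{l}-\genK_i^{-l}$ against the vanishing of $(v-v^{-1})^l[l]^!_v\begin{bmatrix}\genK_i;0\\ l\end{bmatrix}_v$). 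The genuinely new piece is the odd statement $\genXE_\alpha(\ol{\genXE}_\alpha)^{l-1}=0$, which you handle correctly via the relation $\ol{\genXE}_\alpha^{2}=-\frac{v-v^{-1}}{v+v^{-1}}\genXE_\alpha^{2}$ from Lemma \ref{po-po1}, with the division by $\epsilon+\epsilon^{-1}$ legitimately replaced by cancellation of the nonzero scalar $(\epsilon+\epsilon^{-1})^{s}$ on the free $\Z[\epsilon,\epsilon^{-1}]$-module $\eUZ$ (Theorem \ref{basis}). The only caveat worth recording is the implicit assumption $l\ge 3$ (so that $\epsilon\neq\pm1$, $[l]_\epsilon=0$, and $\epsilon\pm\epsilon^{-1}\neq0$), which the paper also leaves tacit.
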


 Let ${{\boldsymbol U}^{res}_{\ep,\mc Z}}$ be the $\mcZ$-subsuperalgebra of $\qUZ$ generated by $\genXE_\alpha^{(m)}$,${\ol{\genXE}}_\alpha$, $\genF_{\alpha}^{(n)}$,${\ol{\genF}}_{\alpha}$, $\genK_{\ol{i}}$, $\genK_i^{\pm 1}$, $\begin{bmatrix}
\genK_i\\t_i
\end{bmatrix}_v$ with $\alpha\in \Phi^+$, $1\leq i \leq n $, $0\leq m,n,t_i<l$.  
Define the  subsuperalgebra ${{\boldsymbol U}^{res~+}_{\ep,\mc Z}}$ (respectively, ${{\boldsymbol U}^{res~-}_{\ep,\mc Z}}$) generated by $\genXE_\alpha^{(m)}$, ${\ol{\genXE}}_\alpha$ (respectively, $\genF_{\alpha}^{(n)}$, ${\ol{\genF}}_{\alpha}$). Denote by  ${{\boldsymbol U}^{res~0}_{\ep,\mc Z}}$ the  subsuperalgebra of $\qUZ$ generated by $\genK_{\ol{i}}$, $\genK_i^{\pm 1}$, $\begin{bmatrix}
K_i\\t_i
\end{bmatrix}_v$. 

Then, similar to \cite[Proposition 9.3.7]{CP}, we have the following conclusion.
\begin{prop}
    Suppose that $\ep$ is a primitive $l$-th root of unity, where $l$ is odd. Then we have
    \begin{enumerate}
        \item ${{\boldsymbol U}^{res~+}_{\ep,\mc Z}}$ is a free $\Z[\ep,\ep^{-1}]$-supermodule of rank $(l^{\frac{n(n+1)}{2}}~|~2^{\frac{n(n+1)}{2}})$ with basis the set of products 
         $\prod_{\alpha\in\Phi^+}(\genXE_{\alpha}^{(m)}{\ol{\genXE}}_{\alpha}^{m'}),$ where $0\leq m <l,~ m'\in\{0,1\}$;
    \item ${{\boldsymbol U}^{res~-}_{\ep,\mc Z}}$ is a free $\Z[\ep,\ep^{-1}]$-supermodule of rank $(l^{\frac{n(n+1)}{2}}~|~2^{\frac{n(n+1)}{2}})$ with basis the set of products 
   $\prod_{\alpha\in\Phi^+}(\genF_{\alpha}^{(m)}{\ol{\genF}}_{\alpha}^{m'})$, where $0\leq m<0,~ m'\in\{0,1\}$;
\item ${{\boldsymbol U}^{res~0}_{\ep,\mc Z}}$ is a free $\Z[\ep,\ep^{-1}]$-supermodule of rank $((2l)^n~|~2^n)$ with basis the set of products 
$\prod_{ i=1}^{n}\genK_i^{\sigma_i}\begin{bmatrix}
\genK_i\\t_i
\end{bmatrix}_v\genK_{\ol{i}}^{\xi_i},$ where $0\leq t_i<l,~ \sigma_i,\xi_i\in\{0,1\}$;
\item there is an isomorphism of $\Z[\ep,\ep^{-1}]$-supermodules ${{\boldsymbol U}^{res~-}_{\ep,\mc Z}}\otimes{{\boldsymbol U}^{res~0}_{\ep,\mc Z}}\otimes{{\boldsymbol U}^{res~+}_{\ep,\mc Z}}\to {{\boldsymbol U}^{res}_{\ep,\mc Z}}$.
    \end{enumerate}
\end{prop}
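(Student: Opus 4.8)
The plan is to deduce the proposition entirely from the integral results already in place, namely the $\mcZ$-bases of $V^{+}_{v,\mcZ}$, $V^{-}_{v,\mcZ}$, $V^{0}_{v,\mcZ}$ and $V_{v,\mcZ}$ established above, the triangular decomposition of $\qUZ$ from Theorem \ref{basis}, and the isomorphism $V_{v,\mcZ}\cong\qUZ$ of Proposition \ref{iso}, by base change along $v\mapsto\ep$ followed by a confinement argument that forces every divided-power exponent below $l$. First I would base-change: since $V^{+}_{v,\mcZ}$ is free over $\mcZ$ with basis $\{\prod_{\alpha\in\Phi^+}(\genXE_{\alpha}^{(m)}\ol{\genXE}_{\alpha}^{m'})\mid m\in\N,\ m'\in\{0,1\}\}$, applying $-\otimes_{\mcZ}\Z[\ep,\ep^{-1}]$ shows that the image of this set is a $\Z[\ep,\ep^{-1}]$-basis of the positive part of $\eUZ$, and likewise for the negative and toral parts; by Theorem \ref{basis} the triple products form a basis of $\eUZ$. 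In particular \emph{any subset} of these monomials is $\Z[\ep,\ep^{-1}]$-linearly independent, which immediately yields the linear independence asserted in (1)--(4).

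It remains to prove spanning for (1). I would induct on the length of a product of the generators $\genXE_{\alpha}^{(m)}$, $\ol{\genXE}_{\alpha}$ with $0\le m<l$, reordering into the fixed total order on $\Phi^+$ by the straightening relations (QZ4). The crucial point is that these relations never manufacture an exponent $\ge l$: in $\genXE_{\alpha'}^{(m)}\genXE_{\alpha}^{(n)}=\sum_{0\le j\le\min\{m,n\}}v^{j+(m-j)(n-j)}\genXE_{\alpha}^{(n-j)}\genXE_{\alpha+\alpha'}^{(j)}\genXE_{\alpha}^{(m-j)}$ every exponent on the right is $\le\max(m,n)<l$, and whenever two divided powers of the same root vector are collected via $\genXE_{\alpha}^{(a)}\genXE_{\alpha}^{(b)}=\begin{bmatrix}a+b\\b\end{bmatrix}_v\genXE_{\alpha}^{(a+b)}$ with $a,b<l$, either $a+b<l$ or the coefficient $\begin{bmatrix}a+b\\b\end{bmatrix}_\ep$ vanishes by the preceding root-of-unity Lemma. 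The odd vectors are controlled by $\ol{\genXE}_{\alpha}^{(2)}=-\frac{v-v^{-1}}{v+v^{-1}}\genXE_{\alpha}^{(2)}$ (forcing $m'\le 1$) together with the vanishing of $\genXE_{\alpha}^{l}$ and $\genXE_{\alpha}\ol{\genXE}_{\alpha}^{\,l-1}$ from the same Lemma. Hence the restricted monomials span ${{\boldsymbol U}^{res~+}_{\ep,\mc Z}}$, proving (1), and the rank is read off by counting these monomials. Part (2) follows by the identical argument using (QZ5), or simply by transporting (1) through the anti-involution $\Omega$.

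For the toral part (3), the same scheme applies with the straightening relations (QZ1): the quadratic relation $\genK_{\ol{i}}^{2}=v^{-1}\genK_i\begin{bmatrix}\genK_i;0\\1\end{bmatrix}_v-(1-v^{-2})\begin{bmatrix}\genK_i;0\\2\end{bmatrix}_v$ forces $\xi_i\in\{0,1\}$, while $\genK_i^{2l}=1$ from the preceding Lemma, combined with the binomial identities of (QZ1), confines $\sigma_i\in\{0,1\}$ and $t_i<l$, giving $2l$ even and $2$ odd choices per index $i$. Finally (4): the multiplication map is the restriction of the triangular decomposition of $\eUZ$ coming from Theorem \ref{basis}. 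On the tensor products of the bases from (1)--(3) its image is exactly the set of ordered products $\genXE_{A^{\star}_{(-)}}\cdot\genK_{A^{\star}_0,\tau}\cdot\genXE_{A^{\star}_{(+)}}$ with all exponents $<l$, a subset of the PBW basis of $\eUZ$, hence linearly independent; and these span ${{\boldsymbol U}^{res}_{\ep,\mc Z}}$ because any product of its generators can be brought into $(-,0,+)$ order by the cross relations (QZ2) and (QZ3), which deposit the emerging $\genK$-factors into the toral part and, as above, never create an exponent $\ge l$. This gives the asserted isomorphism of $\Z[\ep,\ep^{-1}]$-supermodules.

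The step I expect to be the main obstacle is the confinement itself: verifying that the \emph{full} list of straightening relations (QZ1)--(QZ5) -- in particular the mixed even/odd relations among the $\ol{\genXE}_{\alpha}$ and the three-term relations for non-adjacent roots -- genuinely closes on the exponent-$<l$ monomials, so that the generated subalgebra acquires no overflow contributions and the super-signs remain compatible at each reordering. This is precisely where the vanishing statements of the preceding Lemma (the vanishing of $\begin{bmatrix}m+n\\n\end{bmatrix}_\ep$ for $m+n\ge l$, and of $\genXE_{\alpha}^{l}$, $\genXE_{\alpha}\ol{\genXE}_{\alpha}^{\,l-1}$) must be invoked; everything else reduces to the integral bookkeeping already carried out over $\mcZ$ in Theorem \ref{basis} and Proposition \ref{iso}, exactly paralleling \cite[Proposition 9.3.7]{CP} and \cite[Section 5]{Lus2}.
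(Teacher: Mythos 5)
Your proposal is correct and follows essentially the same route as the paper, which itself gives no written argument beyond the appeal to \cite[Proposition 9.3.7]{CP}: linear independence by specializing the integral PBW bases of $V^{\pm}_{v,\mcZ}$, $V^{0}_{v,\mcZ}$ along $v\mapsto\ep$, and spanning by straightening with (QZ1)--(QZ5) together with the vanishing of $\begin{bmatrix}m+n\\n\end{bmatrix}_\ep$ and of $\genXE_\alpha^l$, $\genXE_\alpha\ol{\genXE}_\alpha^{\,l-1}$ from the preceding lemma. Your elaboration of the confinement step (no overflow of exponents past $l$ under reordering) is exactly the content the paper delegates to the cited reference.
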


Define {$\eU=\qUZ\otimes_{\mcZ}\Q(\epsilon)$}. Then we show that the Hopf algebra $\eU$ has finite dimension $(2^n l^{n(n+2)}~|~2^{n(n+2)})$ over $\Q(\ep)$ and the basis is defined as in the preceding proposition, with the elements in part (3) replaced by the products $\prod_{ i=1}^{n}\genK_i^{\sigma_i}\genK_{\ol{i}}^{\xi_i},$ where $0\leq \sigma_i<2l,~\xi_i\in\{0,1\}$. In addition, a presentation of $\eU$ can be provided by generators and relations.
\begin{prop}\label{rootof1}
    $\eU$ is isomorphic to the associative $\Q(\ep)$-superalgebra with generators $\genXE_\alpha$, ${\ol{\genXE}}_\alpha$, $\genF_\alpha$, ${\ol{\genF}}_\alpha$, $\genK_i^{\pm 1}$, $\genK_{\ol{i}}$, for $\alpha\in\Phi^+$, $1\leq i \leq n$, and the following defining relations:
    \begin{align*}
({\rm QU1})\quad & 
    {\genK}_{i} {\genK}_{i}^{-1} = {\genK}_{i}^{-1} {\genK}_{i} = 1,  \qquad
	{\genK}_{i} {\genK}_{j} = {\genK}_{j} {\genK}_{i} , \qquad
	{\genK}_{i} {\genK}_{\ol{j}} = {\genK}_{\ol{j}} {\genK}_{i}, \\
    &	{\genK}_{\ol{i}} {\genK}_{\ol{j}} + {\genK}_{\ol{j}} {\genK}_{\ol{i}}
	= 2 {\delta}_{i,j} \frac{{\genK}_{i}^2 - {\genK}_{i}^{-2}}{{\ep}^2 - {\ep}^{-2}},\qquad \genK_i^{2l}=1; \\
({\rm QU2})\quad &
    \genK_i\genXE_{\alpha_j}={\ep}^{(\epsilon_i,\alpha_j)}\genXE_{\alpha_j}\genK_i,\quad \genK_{\ol{i}}\genXE_{\alpha_j}={\ep}^{(\epsilon_i,\alpha_j)}\genXE_{\alpha_j}\genK_{\ol{i}}+(\delta_{i,j}-\delta_{i-1,j}){\ol{\genXE}}_{\alpha_j}\genK_i^{-1},\\& 
    \genK_i{\ol{\genXE}}_{\alpha_j}={\ep}^{(\epsilon_i,\alpha_j)}{\ol{\genXE}}_{\alpha_j}\genK_i,\quad \genK_{\ol{i}}{\ol{\genXE}}_{\alpha_j}=-{\ep}^{(\epsilon_i,\alpha_j)}{\ol{\genXE}}_{\alpha_j}\genK_{\ol{i}}+(\delta_{i,j}+\delta_{i-1,j})\genXE_{\alpha_j}\genK_i^{-1},\\&    
    \genK_i\genF_{\alpha_j}={\ep}^{-(\epsilon_i,\alpha_j)}\genF_{\alpha_j}\genK_i,\quad \genK_{\ol{i}}\genF_{\alpha_j}={\ep}^{(\epsilon_i,\alpha_j)}\genF_{\alpha_j}\genK_{\ol{i}}-(\delta_{i,j}-\delta_{i-1,j}){\ol{\genF}}_{\alpha_j}\genK_i,\\&
    \genK_i{\ol{\genF}}_{\alpha_j}={\ep}^{-(\epsilon_i,\alpha_j)}{\ol{\genF}}_{\alpha_j}\genK_i,\quad
    \genK_{\ol{i}}{\ol{\genF}}_{\alpha_j}=-{\ep}^{(\epsilon_i,\alpha_j)}{\ol{\genF}}_{\alpha_j}\genK_{\ol{i}}+(\delta_{i,j}+\delta_{i-1,j})\genF_{\alpha_j}\genK_i;\\
({\rm QU3})\quad & 
\genXE_{\alpha_{i}}\genF_{\alpha_{j}}-\genF_{\alpha_{j}}\genXE_{\alpha_{i}}=\delta_{i,j}\frac{{\genK}_{i} {\genK}_{i+1}^{-1} - {\genK}_{i}^{-1}{\genK}_{i+1}}{{\ep} - {\ep}^{-1}},\\&
\genXE_{\alpha_{i}}{\ol{\genF}}_{\alpha_j}-{\ol{\genF}}_{\alpha_j}\genXE_{\alpha_{i}}=\delta_{i,j}(\genK_{i+1}^{-1}\genK_{\ol{i}}-\genK_{\ol{i+1}}\genK_i^{-1}),\\&
{\ol{\genXE}}_{\alpha_i}\genF_{\alpha_{j}}-\genF_{\alpha_{j}}{\ol{\genXE}}_{\alpha_i}=\delta_{i,j}({\genK_{i+1}\genK_{\ol{i}}-\genK_{\ol{i+1}}\genK_i}),\\&
{\ol{\genXE}}_{\alpha_i}{\ol{\genF}}_{\alpha_i}+{\ol{\genF}}_{\alpha_i}{\ol{\genXE}}_{\alpha_i}=\delta_{i,j}(\frac{\genK_i\genK_{i+1}-\genK_i^{-1}\genK_{i+1}^{-1}}{{\ep}-{\ep}^{-1}}+({\ep}-{\ep}^{-1})\genK_{\ol{i}}\genK_{\ol{i+1}});
\\
({\rm QU4})\quad &
{\ol{\genXE}}_{\alpha}^2=-\frac{{\ep}-{\ep}^{-1}}{{\ep}+{\ep}^{-1}}{{\genXE}}_{\alpha}^2,\quad {{\genXE}}_{\alpha}^l=0, \quad {{\genXE}}_{\alpha}{\ol{\genXE}}_{\alpha}={\ol{\genXE}}_{\alpha}{{\genXE}}_{\alpha};\\&
{{\genXE}}_{\alpha_i}{{\genXE}}_{\alpha}={{\genXE}}_{\alpha}{{\genXE}}_{\alpha_i},\qquad {{\genXE}}_{\alpha_i}{\ol{\genXE}}_{\alpha}={\ol{\genXE}}_{\alpha}{{\genXE}}_{\alpha_i},\\&
 {\ol{\genXE}}_{\alpha_i}{{\genXE}}_{\alpha}={{\genXE}}_{\alpha}{\ol{\genXE}}_{\alpha_i},\qquad
{\ol{\genXE}}_{\alpha_i}{\ol{\genXE}}_{\alpha}=-{\ol{\genXE}}_{\alpha}{\ol{\genXE}}_{\alpha_i},\quad \text{ if } (\alpha,\alpha_i)=0, \enspace i>g(\alpha);\\&
{{\genXE}}_{\alpha'}{{\genXE}}_{\alpha}={\ep}{{\genXE}}_{\alpha}{{\genXE}}_{\alpha'}+{\ep}{{\genXE}}_{\alpha+\alpha'},\quad
{\ol{\genXE}}_{\alpha'}{{\genXE}}_{\alpha}={\ep}{{\genXE}}_{\alpha}{\ol{\genXE}}_{\alpha'}+{\ep}{\ol{\genXE}}_{\alpha+\alpha'},\\&
{\ep}{{\genXE}}_{\alpha'}{{\genXE}}_{\alpha+\alpha'}={{\genXE}}_{\alpha+\alpha'}{{\genXE}}_{\alpha'},\quad
{{\genXE}}_{\alpha}{{\genXE}}_{\alpha+\alpha'}={\ep}{{\genXE}}_{\alpha+\alpha'}{{\genXE}}_{\alpha},\\&
{\ol{\genXE}}_{\alpha}{{\genXE}}_{\alpha+\alpha'}={\ep}{{\genXE}}_{\alpha+\alpha'}{\ol{\genXE}}_{\alpha},\quad
{{\genXE}}_{\alpha}{\ol{\genXE}}_{\alpha+\alpha'}={\ep}{\ol{\genXE}}_{\alpha+\alpha'}{{\genXE}}_{\alpha},\quad {\ol{\genXE}}_{\alpha}{\ol{\genXE}}_{\alpha+\alpha'}=-\ep{\ol{\genXE}}_{\alpha+\alpha'}{\ol{\genXE}}_{\alpha},\\
&
{{\genXE}}_{\alpha'}{\ol{\genXE}}_{\alpha+\alpha'}-\ep{\ol{\genXE}}_{\alpha+\alpha'}{{\genXE}}_{\alpha'}={\ol{\genXE}}_{\alpha'}{{\genXE}}_{\alpha+\alpha'}-\ep{{\genXE}}_{\alpha+\alpha'}{\ol{\genXE}}_{\alpha'},\\
&{\ol{\genXE}}_{\alpha'}{\ol{\genXE}}_{\alpha+\alpha'}+{\ep}{\ol{\genXE}}_{\alpha+\alpha'}{\ol{\genXE}}_{\alpha'}=-({\ep}-{\ep}^{-1}){{\genXE}}_{\alpha+\alpha'}{{\genXE}}_{\alpha'}, \text{ if }~(\alpha,\alpha')=-1,\enspace g(\alpha)<g(\alpha');\\&
{\ol{\genXE}}_{\alpha_i}{{\genXE}}_{\alpha_{i+1}}={\ep}{{\genXE}}_{\alpha_{i+1}}{\ol{\genXE}}_{\alpha_i}+{{\genXE}}_{\alpha_i}{\ol{\genXE}}_{\alpha_{i+1}}-{\ep}{\ol{\genXE}}_{\alpha_{i+1}}{{\genXE}}_{\alpha_i},\\&
{\ol{\genXE}}_{\alpha_i}{\ol{\genXE}}_{\alpha_{i+1}}+{\ep}{\ol{\genXE}}_{\alpha_{i+1}}{\ol{\genXE}}_{\alpha_i}={{\genXE}}_{\alpha_i}{{\genXE}}_{\alpha_{i+1}}-{\ep}{{\genXE}}_{\alpha_i}{{\genXE}}_{\alpha_i};\\
({\rm QU5})\quad &
{\ol{\genF}}_{\alpha}^2=\frac{{\ep}-{\ep}^{-1}}{{\ep}+{\ep}^{-1}}{{\genF}}_{\alpha}^2,\quad {{\genF}}_{\alpha}^l=0,\quad {{\genF}}_{\alpha}{\ol{\genF}}_{\alpha}={\ol{\genF}}_{\alpha}{{\genF}}_{\alpha}; \\&
{{\genF}}_{\alpha_i}{{\genF}}_{\alpha}={{\genF}}_{\alpha}{{\genF}}_{\alpha_i},\quad {{\genF}}_{\alpha_i}{\ol{\genF}}_{\alpha}={\ol{\genF}}_{\alpha}{{\genF}}_{\alpha_i}, \\&
{\ol{\genF}}_{\alpha_i}{\ol{\genF}}_{\alpha}={\ol{\genF}}_{\alpha}{\ol{\genF}}_{\alpha_i}, \quad {\ol{\genF}}_{\alpha_i}{\ol{\genF}}_{\alpha}=-{\ol{\genF}}_{\alpha}{\ol{\genF}}_{\alpha_i},\quad \text{ if } (\alpha,\alpha_i)=0, \enspace i>g(\alpha);\\&
{{\genF}}_{\alpha'}{{\genF}}_{\alpha}={\ep}{{\genF}}_{\alpha}{{\genF}}_{\alpha'}-{{\genF}}_{\alpha+\alpha'},\quad
{\ol{\genF}}_{\alpha'}{{\genF}}_{\alpha}={\ep}{{\genF}}_{\alpha}{\ol{\genF}}_{\alpha'}-{\ol{\genF}}_{\alpha+\alpha'},\\&
{\ep}{{\genF}}_{\alpha'}{{\genF}}_{\alpha+\alpha'}={{\genF}}_{\alpha+\alpha'}{{\genF}}_{\alpha'},\quad
{{\genF}}_{\alpha}{{\genF}}_{\alpha+\alpha'}={\ep}{{\genF}}_{\alpha+\alpha'}{{\genF}}_{\alpha},\\&
{\ol{\genF}}_{\alpha}{{\genF}}_{\alpha+\alpha'}={\ep}{{\genF}}_{\alpha+\alpha'}{\ol{\genF}}_{\alpha'},\quad
{{\genF}}_{\alpha}{\ol{\genF}}_{\alpha+\alpha'}={\ep}{\ol{\genF}}_{\alpha+\alpha'}{{\genF}}_{\alpha},\quad {\ol{\genF}}_{\alpha}{\ol{\genF}}_{\alpha+\alpha'}=-\ep{\ol{\genF}}_{\alpha+\alpha'}{\ol{\genF}}_{\alpha},\\
&
{{\genF}}_{\alpha'}{\ol{\genF}}_{\alpha+\alpha'}-\ep{\ol{\genF}}_{\alpha+\alpha'}{{\genF}}_{\alpha'}={\ol{\genF}}_{\alpha'}{{\genF}}_{\alpha+\alpha'}-\ep{{\genF}}_{\alpha+\alpha'}{\ol{\genF}}_{\alpha'},\\
&
{\ol{\genF}}_{\alpha'}{\ol{\genF}}_{\alpha+\alpha'}+{\ep}{\ol{\genF}}_{\alpha+\alpha'}{\ol{\genF}}_{\alpha'}=({\ep}-{\ep}^{-1}){{\genF}}_{\alpha+\alpha'}{{\genF}}_{\alpha'}, \text{ if }~(\alpha,\alpha')=-1,\enspace g(\alpha)<g(\alpha');\\&
{\ol{\genF}}_{\alpha_i}{{\genF}}_{\alpha_{i+1}}={\ep}{{\genF}}_{\alpha_{i+1}}{\ol{\genF}}_{\alpha_i}+{{\genF}}_{\alpha_i}{\ol{\genF}}_{\alpha_{i+1}}-{\ep}{\ol{\genF}}_{\alpha_{i+1}}{{\genF}}_{\alpha_i},\\&
{\ol{\genF}}_{\alpha_i}{\ol{\genF}}_{\alpha_{i+1}}+{\ep}{\ol{\genF}}_{\alpha_{i+1}}{\ol{\genF}}_{\alpha_i}={\ep}{{\genF}}_{\alpha_i}{{\genF}}_{\alpha_i}-{{\genF}}_{\alpha_i}{{\genF}}_{\alpha_{i+1}}.
    \end{align*}
\end{prop}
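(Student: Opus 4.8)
The plan is to prove the presentation by the standard two-step scheme: construct a surjection from the abstractly presented algebra onto $\eU$, then match dimensions. Write $\bar W$ for the associative $\Q(\ep)$-superalgebra defined by the generators $\genXE_\alpha,\ol{\genXE}_\alpha,\genF_\alpha,\ol{\genF}_\alpha,\genK_i^{\pm1},\genK_{\ol i}$ (with their evident parities) and the relations (QU1)--(QU5). The first task is to build a superalgebra homomorphism $\Psi\colon\bar W\to\eU$ sending each generator to the root vector or Cartan element of the same name, which amounts to checking that (QU1)--(QU5) hold in $\eU$.

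To verify the relations, I would use Proposition \ref{iso}(2) to identify $\eU=\qUZ\otimes_{\mcZ}\Q(\ep)$ with $V_{v,\mcZ}\otimes_{\mcZ}\Q(\ep)$, so that every relation (QZ1)--(QZ5) survives the base change $v\mapsto\ep$. Over the field $\Q(\ep)$ the scalar $[m]^!_v$ specialises to $[m]^!_\ep\in\Q(\ep)^\times$ for $0\le m<l$, whence $\genXE_\alpha^{(m)}=([m]^!_\ep)^{-1}\genXE_\alpha^{m}$ and likewise for $\genF$. Substituting these identities into (QZ1)--(QZ5) and invoking the root-of-unity facts recorded above (namely $\genXE_\alpha^l=0$, the vanishing of the Gaussian binomial $\binom{m+n}{n}_\ep=0$ for $0\le m,n<l$ with $m+n\ge l$, and $\genK_i^{2l}=1$) collapses each divided-power relation onto the corresponding relation of (QU1)--(QU5); in particular the $m=n=1$ cases of the straightening relations of (QZ4)/(QZ5) give the commutation relations of (QU4)/(QU5), while $[l]^!_\ep=0$ forces $\genXE_\alpha^l=0$. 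This yields a well-defined $\Psi$. Surjectivity is then immediate: the stated PBW-type basis of $\eU$ consists of ordered products of $\genXE_\alpha^{(m)},\ol{\genXE}_\alpha,\genF_\alpha^{(m)},\ol{\genF}_\alpha$ and Cartan elements with $0\le m<l$, and each such $\genXE_\alpha^{(m)}$ is the invertible scalar multiple $([m]^!_\ep)^{-1}\genXE_\alpha^{m}$ of a power of a generator, so the images of the generators of $\bar W$ already span $\eU$.

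Injectivity I would reduce to a dimension count. The goal is to show that (QU1)--(QU5) suffice to rewrite any monomial in the generators of $\bar W$ as a $\Q(\ep)$-linear combination of ordered monomials $\prod_{\alpha\in\Phi^+}(\genF_\alpha^{a_\alpha}\ol{\genF}_\alpha^{a'_\alpha})\cdot\prod_{i=1}^n(\genK_i^{\sigma_i}\genK_{\ol i}^{\xi_i})\cdot\prod_{\alpha\in\Phi^+}(\genXE_\alpha^{b_\alpha}\ol{\genXE}_\alpha^{b'_\alpha})$ with $0\le a_\alpha,b_\alpha<l$, $a'_\alpha,b'_\alpha,\xi_i\in\{0,1\}$ and $0\le\sigma_i<2l$. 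Here (QU2)--(QU3) move the Cartan part and the negative root vectors past the positive ones, (QU1) normalises the torus via $\genK_i^{2l}=1$, and (QU4) (resp. (QU5)) reorders the positive (resp. negative) root vectors, using $\genXE_\alpha^l=0$ to bound the even exponents and $\ol{\genXE}_\alpha^2=-\tfrac{\ep-\ep^{-1}}{\ep+\ep^{-1}}\genXE_\alpha^2$ to force the odd exponents into $\{0,1\}$. Counting these monomials gives exactly the super-dimension of $\eU$ stated above, so $\dim_{\Q(\ep)}\bar W\le\dim_{\Q(\ep)}\eU$; combined with the surjectivity of $\Psi$ this forces $\Psi$ to be an isomorphism. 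I would carry out the straightening along the lines of \cite[Proposition 2.12, Proposition 2.17]{Lus2} and \cite[Section 9.3]{CP}.

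The hard part will be this PBW-straightening inside $\bar W$: one must check that the listed relations genuinely reorder arbitrary products of root vectors into the fixed order for every configuration of $\alpha,\alpha'\in\Phi^+$ (the orthogonal case $(\alpha,\alpha')=0$, the case $(\alpha,\alpha')=-1$ with $g(\alpha)<g(\alpha')$, and the adjacent simple-root case $\alpha_i,\alpha_{i+1}$), and that no further relations are needed. The super-signs and the relation tying the odd square $\ol{\genXE}_\alpha^2$ to the even square $\genXE_\alpha^2$ make the bookkeeping more delicate than in the purely even setting of \cite{Lus2,CP}; the commutation formulas of Lemma \ref{KE}--Lemma \ref{no-po1} are precisely what guarantee that the straightening moves are consistent, so I would lean on them systematically, treating the even reordering first and then the odd/even mixed moves.
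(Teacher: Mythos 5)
The paper offers no written proof of Proposition \ref{rootof1}; it is stated after the basis results of Section 6 with an implicit appeal to Proposition \ref{iso} and the pattern of \cite[Section 9.3]{Lus2} and \cite[Section 9.3]{CP}. Your two-step scheme --- check that (QU1)--(QU5) hold in $\eU$ to obtain $\Psi\colon\bar W\to\eU$, then straighten monomials in $\bar W$ into PBW order and compare dimensions --- is exactly the intended argument, and your verification of the relations by specialising (QZ1)--(QZ5) at $m=n=1$, $v\mapsto\ep$, using $[m]^!_\ep\neq 0$ for $0\le m<l$ ($l$ odd), is sound.

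The step that does not go through as written is surjectivity. With $\eU$ taken literally as $\qUZ\otimes_{\mcZ}\Q(\ep)$, Theorem \ref{basis} gives a free $\mcZ$-basis of $\qUZ$ involving the divided powers $\genXE_{\alpha}^{(m)}$ for \emph{all} $m\in\N$, and freeness is preserved under base change; in particular $\genXE_{\alpha}^{(l)}$ is a nonzero element of $\eU$. The image of $\Psi$, however, is spanned by ordered monomials in ordinary powers, and $\genXE_\alpha^{\,m}=[m]^!_\ep\,\genXE_\alpha^{(m)}=0$ once $m\ge l$, so the image is the (finite-dimensional) span of the PBW monomials with all even exponents $<l$ and does not contain $\genXE_\alpha^{(l)}$. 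Hence ``surjectivity is then immediate'' fails for this target, and the finite super-dimension you quote is not the dimension of $\qUZ\otimes_{\mcZ}\Q(\ep)$. The basis ``with $0\le m<l$'' that you invoke is a basis of the subalgebra generated by $\genXE_\alpha,\ol{\genXE}_\alpha,\genF_\alpha,\ol{\genF}_\alpha,\genK_i^{\pm1},\genK_{\ol i}$ alone (the analogue of the finite part $U^{\mathrm{fin}}_\ep$ of \cite[Section 9.3]{CP}), which is the algebra the presentation actually describes; this imprecision originates in the paper, but your proof must either restrict the target of $\Psi$ to that subalgebra --- after which your argument, including the identification $\genXE_\alpha^{(m)}=([m]^!_\ep)^{-1}\genXE_\alpha^{m}$ for $0\le m<l$ and the dimension count, closes the proof --- or show the higher divided powers lie in the image, which they do not. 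The remaining work you correctly identify is the PBW straightening inside $\bar W$ for all configurations of $\alpha,\alpha'$, for which Lemmas \ref{KE}--\ref{no-po1} supply the needed super-commutation data.
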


By comparing the generating relations in Definition \ref{defqn} with those in Proposition \ref{rootof1}, we immediately conclude that the restricted and non-restricted integral forms are non-isomorphic when $v$ is a root of unity.\\

\textbf{Acknowledgements.}  The authors would like to thank Jie Du for helpful suggestions. This work was partially supported by the National Natural Science Foundation of China (Grants Nos. 12371040 and 12131018).\\


\begin{thebibliography}{10}

\bibitem{BLM}
{\sc A.~A. Beilinson, G.~Lusztig, and R.~MacPherson}, {\em A geometric setting for the quantum deformation of {{GL\textsubscript{n}}}}, Duke Mathematical Journal, 61 (1990), pp.~655--677.

\bibitem{CP}
{\sc V.~Chari and A.~Pressley}, {\em A Guide to Quantum Groups}, Cambridge University Press, 10 1995.

\bibitem{CLZ}
{\sc J.~Chen, Z.~Li, and H.~Zhu}, {\em The braid group action on quantum queer superalgebra}, Journal of Algebra, 666 (2025), pp.~169--212.

\bibitem{CW}
{\sc S.-J. Cheng and W.~Wang}, {\em Dualities and representations of {Lie} superalgebras}, Graduate Studies in Mathematics, 144 (2009).

\bibitem{DDPW}
{\sc B.~Deng, J.~Du, B.~Parshall, and J.~Wang}, {\em Finite Dimensional Algebras and Quantum Groups}, vol.~150, American Mathematical Society, Providence, 2008.

\bibitem{Uvsln}
{\sc V.~Drinfeld}, {\em Hopf algebras and the quantum yang-baxter equation}, Proceedings of the USSR Academy of Sciences, 32 (1985), pp.~254--258.

\bibitem{Du95}
{\sc J.~Du}, {\em A note on quantized weyl reciprocity at roots of unity}, Algebra Colloquium, 2 (1995), pp.~363--372.

\bibitem{DGLW2}
{\sc J.~Du, H.~Gu, Z.~Li, and J.~Wan}, {\em Constructing the quantum queer supergroup using hecke-clifford superalgebras}, 2024.

\bibitem{DGLW}
{\sc J.~Du, H.~Gu, Z.~Li, and J.~Wan}, {\em Some {{Multiplication Formulas}} in {{Queer}} q-{{Schur Superalgebras}}}, Transformation Groups,  (2024).

\bibitem{Du94}
{\sc J.~Du and L.~Scott}, {\em Lusztig conjectures, old and new, {{I}}.}, Journal für die reine und angewandte Mathematik, 1994 (1994), pp.~141--182.

\bibitem{DW}
{\sc J.~Du and J.~Wan}, {\em Presenting queer {Schur} superalgebras}, International Mathematics Research Notices, 2015 (2015), pp.~2210--2272.

\bibitem{DJ}
{\sc D.~Grantcharov, J.~H. Jung, S.-J. Kang, and M.~Kim}, {\em Highest weight modules over quantum queer superalgebra ${U_q(\mathfrak {q}(n))}$}, Communications in Mathematical Physics, 296 (2009), pp.~827--860.

\bibitem{GLL}
{\sc H.~Gu, Z.~Li, and Y.~Lin}, {\em The integral {Schur-Weyl-Sergeev} duality}, Journal of Pure and Applied Algebra, 226 (2022), p.~107044.

\bibitem{JCJ}
{\sc J.~C. Jantzen}, {\em Lectures on quantum groups}, American Mathematical Society,  (1995).

\bibitem{Jimbo}
{\sc M.~Jimbo}, {\em A q-difference analogue of ${U(\mathfrak{g})}$ and the {Yang-Baxter} equation}, Letters in Mathematical Physics, 10 (1985), pp.~63--69.

\bibitem{Lus2}
{\sc G.~Lusztig}, {\em Finite-dimensional hopf algebras arising from quantized universal enveloping algebra}, Journal of the American Mathematical Society, 3 (1990), pp.~257--296.

\bibitem{Lus5}
{\sc G.~Lusztig}, {\em Quantum groups at roots of 1}, Geometriae Dedicata, 35 (1990), pp.~89--113.

\bibitem{Lus4}
{\sc G.~Lusztig}, {\em Introduction to Quantum Groups}, Birkh$\ddot{a}$user Boston, MA, 2010.

\bibitem{Ol}
{\sc G.~I. Olshanski}, {\em Quantized universal enveloping superalgebra of type {Q} and a super-extension of the {Hecke} algebra}, Letters in Mathematical Physics, 24 (1992).

\bibitem{schnizer94}
{\sc W.~A. Schnizer}, {\em Roots of unity: {{Representations}} of quantum groups}, Communications in Mathematical Physics, 163 (1994), pp.~293--306.

\end{thebibliography}

\end{document}